\newtheorem{theorem}{Theorem}[section]
\newtheorem{lemma}{Lemma}[section]
\newtheorem{proposition}{Proposition}[section]
\newtheorem{remark}{Remark}[section]
\numberwithin{equation}{section}
\newenvironment{proof}{\medskip\par\noindent{\bf Proof.}\ }{\qquad
\raisebox{-0.5mm}{\rule{1.5mm}{4mm}}\vspace{6pt}}
\newcommand{\bbr}{\mathbb{R}}
\newcommand{\h}{H^1_0(\Omega)}
\newcommand{\bbn}{\mathbb{N}}
\newcommand{\ve}{\varepsilon}
\begin{document}
\title
{\Large\bf On a critical Kirchhoff problem in high dimensions}%

\author{
Yisheng Huang$^{a},$\thanks{E-mail address: yishengh@suda.edu.cn(Yisheng Huang)}\quad
Zeng Liu$^{b},$\thanks{E-mail address: luckliuz@163.com(Zeng Liu)}\quad
Yuanze Wu$^{c}$\thanks{Corresponding
author. E-mail address: wuyz850306@cumt.edu.cn (Yuanze Wu).}\\%
\footnotesize$^{a}${\em  Department of Mathematics, Soochow University, Suzhou 215006, P.R. China }\\%
\footnotesize$^{b}${\em  Department of Mathematics, Suzhou University of Science and Technology, Suzhou 215009, P.R. China}\\
\footnotesize$^{c}${\em  College of Sciences, China University of Mining and Technology, Xuzhou 221116, P.R. China }}%
\date{}
\maketitle

\noindent{\bf Abstract:} In this paper, we consider the following Kirchhoff problem
$$
\left\{\aligned -\bigg(a+b\int_{\Omega}|\nabla u|^2dx\bigg)\Delta u&= \lambda u^{q-1} + \mu u^{2^*-1}, &\quad \text{in }\Omega, \\
u&>0,&\quad\text{in }\Omega,\\
u&=0,&\quad\text{on }\partial\Omega,
\endaligned
\right.\eqno{(\mathcal{P})}
$$
where $\Omega\subset \bbr^N(N\geq4)$ is a bounded domain, $2\leq q<2^*$, $2^*=\frac{2N}{N-2}$ is the critical Sobolev exponent and $a$, $b$, $\lambda$, $\mu$ are positive parameters.  By using the variational method, we obtain some existence and nonexistence results to $(\mathcal{P})$ for all $N\geq4$ with some further conditions on the parameters $a$, $b$, $\lambda$, $\mu$, which partially improve some known results in the literatures.  Furthermore,
Our result for $N=4$ and $q>2$, together with our previous works \cite{HLW15,HLW151}, gives an almost positive answer to Neimen's open question [J. Differential Equations, 257 (2014), 1168--1193].%

\vspace{6mm} \noindent{\bf Keywords:} Kirchhoff type problem; Critical Sobolev exponent; Positive solution; Variational method.%

\vspace{6mm}\noindent {\bf AMS} Subject Classification 2010: 35B09; 35B33; 35J20.%

\section{Introduction}
In this paper, we study the following Kirchhoff problem
$$
\left\{\aligned -\bigg(a+b\int_{\Omega}|\nabla u|^2dx\bigg)\Delta u&= \lambda u^{q-1} + \mu u^{2^*-1}, &\quad \text{in }\Omega, \\
u&>0,&\quad\text{in }\Omega,\\
u&=0,&\quad\text{on }\partial\Omega,
\endaligned
\right.\eqno{(\mathcal{P}_{a,b,\lambda,\mu})}
$$
where $\Omega\subset \bbr^N(N\geq4)$ is a bounded domain, $2\leq q<2^*$, $2^*=\frac{2N}{N-2}$ is the critical Sobolev exponent and $a$, $b$, $\lambda$, $\mu$ are positive parameters.%

The elliptic type Kirchhoff problem (Kirchhoff problem for short) in a domain $\Omega\subset \bbr^N(1\leq N\leq3)$ has a nice background in physics.  Indeed, such problem is related to the stationary analogue of the following model:
\begin{equation}\label{eq001}
\left\{\aligned &u_{tt}-\bigg(a+b\int_{\Omega}|\nabla u|^2dx\bigg)\Delta u=h(x,u)\quad\text{in }\Omega\times(0, T),\\
&u=0\quad\text{on }\partial\Omega\times(0, T),\\
&u(x,0)=u_0(x),\quad u_t(x,0)=u^*(x),\endaligned\right.
\end{equation}
where $T>0$ is a constant, $u_0, u^*$ are continuous functions.  Such model was first proposed by Kirchhoff in 1883 as an extension of the classical D'Alembert's wave equations for free vibration of elastic strings, Kirchhoff's model takes into account the changes in length of the string produced by transverse vibrations.  In \eqref{eq001}, $u$ denotes the displacement, the nonlinearity $h(x,u)$ denotes the external force and the parameter $a$ denotes the initial tension while the parameter $b$ is related to the intrinsic properties of the string (such as Young¡¯s modulus).  For more details on the physical background of the Kirchhoff problem, we refer the readers to \cite{A12,K83}.

Under some suitable assumptions on the nonlinearities, the Kirchhoff problem has a variational structure in a proper Hilbert space.  Thus, it is natural to study the Kirchhoff problem by the variational method.  However, since the Kirchhoff term $-b(\int_{\Omega}|\nabla u|^2dx)\Delta u$ is non-local and $u\mapsto -b(\int_{\Omega}|\nabla u|^2dx)\Delta u$ is not weakly continuous, a typical difficulty of such problem by using the variational method is that the weak limit of the $(PS)$ sequence to the corresponding functional is not the weak solution of the equation in general.  In order to overcome this difficulty, several methods have been developed and various existence and multiplicity results of nontrivial solutions for the Kirchhoff problem in a domain $\Omega\subset \bbr^N(1\leq N\leq3)$ have been established by the variational method in the literatures, see for example \cite{CWL12,F13,HLW151,LLS14,N141,N14,PZ06,W15,ZP06} and the references therein for the bounded $\Omega$ and \cite{AF12,DPS15,G15,HZ12,HLP14,HL15,HLW16,LLS12,LY131,LLT15,SW14,WTXZ12,WHL15} and the references therein for $\Omega=\mathbb{R}^N$.

Recently, the Kirchhoff problem in high dimensions $(N\geq4)$ has begun to attract much attention.  From the view point of calculus of variation, such problem is much more complex and difficult since the order of the Kirchhoff type non-local term $-b(\int_{\Omega}|\nabla u|^2dx)\Delta u$ in the corresponding functional is $4$, which equals to the critical Sobolev exponent $2^*$ for $N=4$ and is greater than $2^*$ for $N\geq5$.  This fact leads to a big difficulty to obtain a compact $(PS)$ sequence for the corresponding functional.  By making some very careful and complex analyses on the (PS) sequence or using the scaling technique, several existence and multiplicity results of nontrivial solutions to the Kirchhoff problem in high dimensions have been established in the literatures, see for example \cite{CKW11,HLW15,HLW151,HLW16,LLT15,N141,N14,WHL15,ZP06}, etc.,  where \cite{HLW15,HLW151,N14} are devoted to the problem $(\mathcal{P}_{a,b,\lambda,\mu})$.  For the sake of clarity, we divide the remaining of introduction into the following two parts according to the dimensions $N=4$ and $N\geq 5$.

\subsection{The case $N=4$}
In \cite{N14}, by establishing a global splitting result of the $(PS)$ sequence to the corresponding functional of $(\mathcal{P}_{a,b,\lambda,\mu})$ with $N=4$ and applying the mountain pass theorem, Naimen proved that $(\mathcal{P}_{a,b,\lambda,\mu})$ has a solution for $N=4$, $q=2$ and $\lambda<a\sigma_1$, where $\sigma_1$ is the first eigenvalue of $-\Delta$ in $L^2(\Omega)$.  In our recent work \cite{HLW15}, by combining
Neimen's splitting result and the linking theorem, we obtain some existence results of solutions to $(\mathcal{P}_{a,b,\lambda,\mu})$ with $N=4$ in both the following two cases:
\begin{itemize}
\item $\lambda<a\sigma_1$ and $\mu>b\mathcal{S}^2$, where $\mathcal{S}>0$ is the usual Sobolev constant defined by%
\begin{equation*}
\mathcal{S}:=\inf\{\|\nabla u\|_{L^2(\Omega)}^2\mid u\in H_0^{1}(\Omega), \|u\|_{L^{2^*}(\Omega)}^2=1\}%
\end{equation*}
and $\|\cdot\|_{L^r(\Omega)}$ is the usual norm in $L^r(\Omega)\, (r\geq1)$.
\item $\lambda>a\sigma_1$ and $\mu<b\mathcal{S}^2$.
\end{itemize}

In our very recent work \cite{HLW151}, by introducing a new scaling technique, we also obtain some special kinds of solutions with precise expressions on the parameters $a$, $b$, $\lambda$ and $\mu$ to the problem  $(\mathcal{P}_{a,b,\lambda,\mu})$ on a ball for $N=4$ and $q=2$ in the above two cases.

It is clear that $(\mathcal{P}_{a,b,\lambda,\mu})$  has a ground state solution if $N=4$, $q=2$, $\lambda>a\sigma_1$ and $\mu<b\mathcal{S}^2$ since now the corresponding functional of $(\mathcal{P}_{a,b,\lambda,\mu})$ is coerce (cf. \cite[Theorem~1.2]{HLW15}).  Here, we say $u$ is a ground state solution to Problem~$(\mathcal{P}_{a,b,\lambda,\mu})$ if $u\in H_0^1(\Omega)$ and  $\mathcal{E}(u)=\inf_{v\in\mathcal{K}}\mathcal{E}(v)$, where $\mathcal{E}:H_0^1(\Omega)\to\mathbb{R}$ is the corresponding $C^2$-functional of $(\mathcal{P}_{a,b,\lambda,\mu})$ given by
\begin{equation}\label{eq01}
\mathcal{E}(u):=\frac{a}2\|\nabla u\|_{L^2(\Omega)}^2+\frac{b}{4}\|\nabla u\|_{L^2(\Omega)}^4-\frac{\lambda}{q}\|u\|_{L^{q}(\Omega)}^{q}-\frac{\mu}{2^*}\|u\|_{L^{2^*}(\Omega)}^{2^*},\quad \forall\, u\in H_0^1(\Omega)
\end{equation}
and $\mathcal{K}:=\{v\in\h\backslash\{0\}\mid \mathcal{E}'(v)=0\}$.
An natural question is {\it whether $(\mathcal{P}_{a,b,\lambda,\mu})$ always has a ground state solution for $N=4$ and $q=2$}?  In this paper, we will explore this question.

When $N=4$ and $2<q<2^*=4$, Naimen also obtained the following existence result to $(\mathcal{P}_{a,b,\lambda,\mu})$ in \cite{N14}:
\begin{theorem}\label{thm0001}  {\em (\cite[Theorem~1.6]{N14})} \
Let $N=4$.  If $b\mathcal{S}^2<\mu<2b\mathcal{S}^2$ and $\Omega\subset \bbr^4$ is strictly star-sharped, then Problem~$(\mathcal{P}_{a,b,\lambda,\mu})$ has a solution under one of the following three cases:
\begin{itemize}
  \item [$(C1)$] $a>0$, $\lambda>0$ is small enough,
  \item [$(C2)$] $\lambda>0$, $a>0$ is large enough,
  \item [$(C3)$] $a>0$, $\lambda>0$ and $\frac{\mu}{b}$ is
  sufficiently close to $\mathcal{S}^2$.
\end{itemize}
\end{theorem}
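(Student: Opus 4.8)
The plan is to obtain the solution as a mountain pass critical point of the functional $\mathcal{E}$ in \eqref{eq01}, specialized to $N=4$, $2^*=4$ — so that the nonlocal term $\frac b4\|\nabla u\|_{L^2(\Omega)}^4$ and the critical term $\frac\mu4\|u\|_{L^4(\Omega)}^4$ have the same homogeneity — working on the positive cone (i.e. replacing $u^{q-1},u^{2^*-1}$ by $(u^+)^{q-1},(u^+)^{2^*-1}$), so that any nontrivial critical point is, by the strong maximum principle, a positive solution of $(\mathcal{P}_{a,b,\lambda,\mu})$. The mountain pass geometry is easy to verify here: since $2<q<4$, Sobolev's inequality and the $L^q(\Omega)$ embedding give $\mathcal{E}(u)\ge\frac a2\|\nabla u\|_{L^2(\Omega)}^2-C\lambda\|\nabla u\|_{L^2(\Omega)}^q-C\|\nabla u\|_{L^2(\Omega)}^4\ge\frac a4\|\nabla u\|_{L^2(\Omega)}^2$ for $\|\nabla u\|_{L^2(\Omega)}$ small, so $0$ is a strict local minimum of $\mathcal{E}$; and since $\mu>b\mathcal{S}^2$ one can choose $w\in\h$ with $b\|\nabla w\|_{L^2(\Omega)}^4<\mu\|w\|_{L^4(\Omega)}^4$, so that $\mathcal{E}(tw)\to-\infty$ as $t\to+\infty$. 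Hence the minimax value $c:=\inf_{\gamma\in\Gamma}\max_{t\in[0,1]}\mathcal{E}(\gamma(t))$ (over paths $\gamma$ from $0$ to a point where $\mathcal{E}<0$) satisfies $0<c<+\infty$, and a deformation argument yields a Palais--Smale sequence $(u_n)$ at level $c$.

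The real difficulty is the failure of the Palais--Smale condition, caused by the critical exponent together with the nonlocal quartic term. Along a subsequence $u_n\rightharpoonup u_0$ in $\h$ and $\|\nabla u_n\|_{L^2(\Omega)}^2\to A$ for some $A\ge\|\nabla u_0\|_{L^2(\Omega)}^2$; the weak limit $u_0$ solves the ``frozen'' equation $-(a+bA)\Delta u_0=\lambda u_0^{q-1}+\mu u_0^3$ in $\Omega$, and the concentration--compactness analysis for $N=4$ decomposes $u_n-u_0$ into finitely many rescaled Aubin--Talenti bubbles, each a positive solution of $-(a+bA)\Delta\omega=\mu\omega^3$ in $\bbr^4$, hence a Sobolev extremal with $\|\nabla\omega\|_{L^2(\bbr^4)}^2=(a+bA)\mathcal{S}^2/\mu$, the energies adding up according to $c=\frac a4A+\lambda(\frac14-\frac1q)\|u_0\|_{L^q(\Omega)}^q$. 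The strict star-shapedness of $\Omega$ enters here through a (local) Pohozaev identity, which rules out blow-up on $\partial\Omega$ and constrains the admissible limits $u_0$. If $k$ bubbles occur, then $A(\mu-kb\mathcal{S}^2)=\mu\|\nabla u_0\|_{L^2(\Omega)}^2+ka\mathcal{S}^2>0$, so the hypothesis $\mu<2b\mathcal{S}^2$ forces $k\le1$; consequently compactness at level $c$ can fail only if either $u_0\equiv0$ with exactly one bubble — which gives the single obstruction value $c^*:=\frac{a^2\mathcal{S}^2}{4(\mu-b\mathcal{S}^2)}$ — or $u_0\not\equiv0$ is a nontrivial solution of the frozen equation carrying one bubble, which produces an additional family of obstruction levels.

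It therefore suffices to show that $c$ lies strictly below all of these levels, and this is where the bounds $b\mathcal{S}^2<\mu<2b\mathcal{S}^2$ and the alternatives $(C1)$--$(C3)$ are used. For the value $c^*$ I would use a truncated, $\ve$-rescaled Talenti bubble $v_\ve$ centered at an interior point of $\Omega$, with the sharp $N=4$ asymptotics $\|\nabla v_\ve\|_{L^2(\Omega)}^2=\mathcal{S}^2+O(\ve^2)$, $\|v_\ve\|_{L^4(\Omega)}^4=\mathcal{S}^2+O(\ve^4)$ and $\|v_\ve\|_{L^q(\Omega)}^q=\kappa_q\ve^{4-q}+o(\ve^{4-q})$ with $\kappa_q>0$ (this is where $2<q<4$ is used); optimizing $t\mapsto\mathcal{E}(tv_\ve)$, whose maximizer stays bounded away from $0$ and $+\infty$, one obtains $\max_{t\ge0}\mathcal{E}(tv_\ve)\le c^*+O(\ve^2)-\Theta(\ve^{4-q})<c^*$ for $\ve$ small, since $4-q<2$. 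The obstruction levels coming from a nonzero $u_0$ are the delicate ones: estimating them requires controlling the nontrivial solutions of the frozen problem, and the three cases single out the parameter ranges where this succeeds — $(C3)$ makes $c^*$ (and the whole obstruction set) large, so $c<c^*$ is automatic; $(C2)$ forces any admissible $u_0$ to have large norm, pushing its level above $c$; and $(C1)$ makes the correction $\lambda(\frac14-\frac1q)\|u_0\|_{L^q(\Omega)}^q$ uniformly negligible, so the obstruction stays $\ge c^*$. In each case $c$ is below every obstruction level, so $(u_n)$ converges (up to a subsequence) to a critical point $u\ge0$, $u\not\equiv0$; by the strong maximum principle $u>0$ in $\Omega$, and $u$ solves $(\mathcal{P}_{a,b,\lambda,\mu})$.

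I expect the main obstacle to be this last step, and in particular the control of the obstruction levels carried by a nonzero weak limit plus one bubble: the strict inequality for $c^*$ already requires careful dimension-four asymptotics and a correct bookkeeping of the quartic nonlocal term in $\mathcal{E}(tv_\ve)$ (relying on $q>2$ so that the $O(\ve^{4-q})$ subcritical gain outweighs the $O(\ve^2)$ truncation loss), but it is the parameter-dependent control of the frozen-problem solutions that genuinely forces the case distinction $(C1)$--$(C3)$ and, together with the exclusion of two bubbles, the restriction $b\mathcal{S}^2<\mu<2b\mathcal{S}^2$.
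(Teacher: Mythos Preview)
This theorem is not proved in the present paper; it is quoted from Naimen \cite[Theorem~1.6]{N14} as background motivating the authors' own Theorems~\ref{thm0004}--\ref{thm0005}. There is therefore no in-paper proof to compare against, and what follows is an assessment of your reconstruction of Naimen's argument.

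Your outline is broadly faithful to the mountain-pass/global-splitting strategy, but it contains a real gap: you pass from ``a deformation argument yields a Palais--Smale sequence $(u_n)$'' directly to ``$u_n\rightharpoonup u_0$ and $\|\nabla u_n\|_{L^2(\Omega)}^2\to A$'', taking boundedness of $(u_n)$ for granted. For $N=4$ and $2<q<4$ this is precisely the hard point: the Kirchhoff quartic $\tfrac{b}{4}\|\nabla u\|_{L^2(\Omega)}^4$ and the critical term $\tfrac{\mu}{4}\|u\|_{L^4(\Omega)}^4$ share the same homogeneity, so no linear combination $\mathcal{E}(u_n)-\theta\,\mathcal{E}'(u_n)u_n$ is coercive (for $\theta=\tfrac14$ the leftover is $\tfrac{a}{4}\|\nabla u_n\|_{L^2(\Omega)}^2-\lambda\tfrac{4-q}{4q}\|u_n\|_{L^q(\Omega)}^q$, and since $q>2$ the second term can dominate). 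The splitting result you invoke, \cite[Proposition~1.7]{N14}, already \emph{assumes} a bounded (PS) sequence and already places the bubbles in the interior ($R_n^i\,\mathrm{dist}(x_n^i,\partial\Omega)\to\infty$), so your attribution of the star-shapedness hypothesis to ``ruling out blow-up on $\partial\Omega$'' is misplaced. In Naimen's scheme the strict star-shapedness, the upper bound $\mu<2b\mathcal S^2$, and the alternatives $(C1)$--$(C3)$ enter via a Pohozaev-type analysis used to secure boundedness and to control the frozen limit problem; this is the step your proposal does not supply.

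Your estimate $c<c^*=\tfrac{a^2\mathcal S^2}{4(\mu-b\mathcal S^2)}$ by truncated Talenti functions is correct and coincides with the computation recorded here as Lemma~\ref{lem0003}. Your treatment of the ``nonzero $u_0$ plus one bubble'' obstruction is heuristic; it is worth noting that the present paper's own contribution (Proposition~\ref{prop0002}) is exactly to bypass Naimen's case analysis there by a fibering comparison $\mathcal{E}(u_n)\ge\mathcal{E}(\tilde t_*(u_0)u_n)$, at the price of working on $\mathcal N^-$ under the quantitative bound \eqref{eq0013}, which---via Lemma~\ref{lem0004}---produces a bounded (PS) sequence from the outset and thereby removes the need for star-shapedness, $\mu<2b\mathcal S^2$, and $(C1)$--$(C3)$.
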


We note that Theorem~\ref{thm0001} seems not the natural extension of the corresponding results to $(\mathcal{P}_{a,b,\lambda,\mu})$ in cases of $b=0$ and $N=4$ since it is well known that the conditions that $\mu<2b\mathcal{S}^2$, $\Omega$ is strictly star-sharped and $(C1)$--$(C3)$ are not needed for $(\mathcal{P}_{a,0,\lambda,\mu})$ in the case of $N=4$.  Due to this reason, Naimen proposed the following open question in \cite{N14}:
\begin{enumerate}
\item[{\bf(Q)}]\quad {\bf Are the conditions that $\mu<2b\mathcal{S}^2$, $\Omega\subset \bbr^4$ is strictly star-sharped and $(C1)$--$(C3)$ necessary in Theorem~\ref{thm0001}?}
\end{enumerate}

In \cite{HLW15}, by using the variational method and treating the Kirchhoff type non-local term as a perturbation of $(\mathcal{P}_{a,0,\lambda,\mu})$, we give a partial answer to Naimen's open question.  More precisely, we proved the following theorem:
\begin{theorem}\label{thm0002}   {\em (\cite[Theorem~1.4]{HLW15})} \
Let $N=4$. If $b\mathcal{S}^2<\mu$ and one of conditions $(C1)$--$(C3)$ is satisfied, then there exists $b^*>0 $ such that Problem~$(\mathcal{P}_{a,b,\lambda,\mu})$ has a solution for $b\in(0,b^*)$.
\end{theorem}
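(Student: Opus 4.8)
The plan is to exploit the fact that for small $b$ the Kirchhoff term is a genuinely lower-order perturbation of the classical Brezis--Nirenberg-type functional associated to $(\mathcal{P}_{a,0,\lambda,\mu})$, for which the existence of a mountain-pass solution under each of $(C1)$--$(C3)$ is classical. Concretely, I would work with the functional $\mathcal{E}$ from \eqref{eq01}, verify that it has the mountain-pass geometry uniformly for $b$ in a bounded interval $(0,1]$ say: the origin is a strict local minimum with a uniform radius and height (here one uses $\mu\mathcal{S}$-type estimates and $2\le q<2^*$, and the condition on $\mu$ is not even needed for the geometry, only later), and there is a fixed $v_0\in H^1_0(\Omega)\setminus\{0\}$ with $\mathcal{E}(tv_0)\to-\infty$ as $t\to\infty$; adding the nonnegative term $\frac{b}{4}\|\nabla u\|^4$ only raises the functional, so the mountain-pass level $c_b$ is increasing in $b$ and bounded above by $c_0+\frac{1}{4}\max_{t\ge0}(t^2\|\nabla v_0\|^2)^2$, hence $c_b\le c_0+Cb$ for $b\in(0,1]$.

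The heart of the matter is the compactness threshold. For the classical problem the first noncompactness level is $\frac{a^{N/2}}{N}(\mu^{-1}\mathcal{S})^{N/2}\cdot\mathcal{S}$ — more precisely $\frac{a}{N}\left(\frac{\mathcal{S}}{\mu^{2/2^*}}\right)^{N/2}$ — and the whole point of conditions $(C1)$--$(C3)$ (small $\lambda$, or large $a$, or $\mu/b$ near $\mathcal{S}^2$ which in the $b\to 0$ reduction just means a suitable relation that forces $\mu$ into a usable range together with large $a$ effects) is precisely to guarantee the strict inequality $c_0 < \frac{a}{N}(\mathcal{S}/\mu^{2/2^*})^{N/2}$ via a test-function computation with the Aubin--Talenti instantons; this is exactly what \cite{HLW15}'s proof of the $b=0$ case establishes. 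For the Kirchhoff functional the relevant noncompactness levels are of the form $c_0 + \sum(\text{bubble energies})$, where each bubble energy, by the splitting/profile-decomposition result of Naimen used in \cite{HLW15} (and which I may cite), is bounded below by a positive constant $\beta(a,\mu,\mathcal S)$ that, as $b\to0$, converges to the classical bubble energy $\frac{a}{N}(\mathcal{S}/\mu^{2/2^*})^{N/2}>0$. Hence there is $b^*>0$ so small that for $b\in(0,b^*)$ we still have $c_b\le c_0+Cb < \beta(a,\mu,\mathcal S)$, i.e.\ $c_b$ lies strictly below the first noncompactness level of $\mathcal{E}$.

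With the energy estimate $c_b<\beta$ in hand, I would take a $(PS)_{c_b}$ sequence $\{u_n\}$, extract a weakly convergent subsequence $u_n\rightharpoonup u$, and apply the splitting lemma: either $u_n\to u$ strongly (and then $u$ is a nontrivial critical point at level $c_b$, done), or the energy decomposes as $c_b=\mathcal{E}_\infty(u)+(\text{at least one bubble of energy}\ge\beta)\ge\beta$, contradicting $c_b<\beta$. A minor but real point is the sign/positivity: replace $u$ by $|u|$ or work with the positive part in the functional from the start (as is standard for this class of problems), so that the obtained solution is positive by the strong maximum principle, and the order-four Kirchhoff coefficient $a+b\|\nabla u\|^2>0$ keeps the operator uniformly elliptic so elliptic regularity and Hopf's lemma apply verbatim.

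The main obstacle I anticipate is making the constant $\beta(a,\mu,\mathcal S)$ — the lower bound on a single Kirchhoff bubble's energy — explicit and uniformly positive as $b\to 0$, and aligning it with exactly the same geometric inequality $c_0<\frac{a}{N}(\mathcal{S}/\mu^{2/2^*})^{N/2}$ that powers the $b=0$ theorem; this is where one must quote Naimen's global splitting result precisely and check that for $N=4$ (where the Kirchhoff order $4$ coincides with $2^*$) the extra $\frac{b}{4}\|\nabla u\|^4$ contributions to the bubble energies are $O(b)$ and do not shrink $\beta$ to zero. Everything else — mountain-pass geometry, monotonicity of $c_b$, the upper bound $c_b\le c_0+Cb$, and the final contradiction argument — is routine once the threshold comparison is set up.
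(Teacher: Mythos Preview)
This theorem is quoted from \cite{HLW15}; the present paper does not supply a proof but describes the approach there as ``treating the Kirchhoff type non-local term as a perturbation of $(\mathcal{P}_{a,0,\lambda,\mu})$'' and states explicitly that ``the existence of the $b^*(\mu)$ is obtained by assuming the contrary'', i.e.\ by a contradiction/limiting argument rather than your direct threshold comparison.

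Your outline has a genuine gap, and it lies in a different place from the one you flag. The bubble-energy bound is in fact not the obstacle: from $\|\nabla v_i\|_{L^2(\bbr^4)}^2=(a+b\mathcal{A})\mu^{-1}\mathcal{S}^2$ one computes $\widetilde{\mathcal{E}}_\infty(v_i)=\tfrac{a(a+b\mathcal{A})}{4\mu}\mathcal{S}^2\ge\tfrac{a^2\mathcal{S}^2}{4\mu}$, so $\beta=\tfrac{a^2\mathcal{S}^2}{4\mu}$ works uniformly in $b$. The real problem is the step ``take a $(PS)_{c_b}$ sequence and extract a weakly convergent subsequence'': for $N=4$ the Kirchhoff term $\tfrac{b}{4}\|\nabla u\|^4$ has the \emph{same} homogeneity as the critical term $\tfrac{\mu}{4}\|u\|_{L^4}^4$, and the usual device
\[
\mathcal{E}(u_n)-\tfrac14\mathcal{E}'(u_n)u_n=\tfrac{a}{4}\|\nabla u_n\|_{L^2(\Omega)}^2-\tfrac{4-q}{4q}\lambda\|u_n\|_{L^q(\Omega)}^q
\]
does \emph{not} yield an a~priori bound on $\|\nabla u_n\|$ when $2<q<4$. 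This is exactly why Naimen needs the star-shaped hypothesis (to run a Pohozaev/monotonicity-trick argument) and why \cite{HLW15} retains $(C1)$--$(C3)$. Your claim that $(C1)$--$(C3)$ serve ``precisely to guarantee the strict inequality $c_0<\tfrac{a}{N}(\mathcal{S}/\mu^{2/2^*})^{N/2}$'' is mistaken: for $N=4$ and $2<q<4$ that inequality is the unconditioned Brezis--Nirenberg estimate and needs none of $(C1)$--$(C3)$. Without an argument for boundedness of the Palais--Smale sequence your compactness discussion never begins.

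A secondary point: in your dichotomy you implicitly assume $\widetilde{\mathcal{E}}(u_0)\ge0$ to conclude $c_b\ge\beta$ when bubbles are present. For the modified limit functional one gets
\[
\widetilde{\mathcal{E}}(u_0)=\tfrac{q-2}{2q}a\|\nabla u_0\|_{L^2(\Omega)}^2-\tfrac{4-q}{4q}b\mathcal{A}\|\nabla u_0\|_{L^2(\Omega)}^2+\tfrac{4-q}{4q}\mu\|u_0\|_{L^4(\Omega)}^4,
\]
whose sign is not automatic; the present paper's own Proposition~\ref{prop0002} handles the analogous step not by a sign check but by a fibering-map comparison on $\mathcal{N}^-$.
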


Since the local problem~$(\mathcal{P}_{a,0,\lambda,\mu})$ is dependent of the parameter $\mu$, $b^*$ given by Theorem~\ref{thm0002} is also dependent of the parameter $\mu$.  Let us denote $b^*$ by $b^*(\mu)$.  Then due to Theorem~\ref{thm0002}, we can see that the conditions in Theorem~\ref{thm0001} that $\mu<2b\mathcal{S}^2$ and $\Omega\subset \bbr^4$ is strictly star-sharped are not needed  for $b<b^*(\mu)$.  Clearly, $b^*(\mu)\leq\mu\mathcal{S}^{-2}$ by Theorem~\ref{thm0002}, and yet the existence of the $b^*(\mu)$ is obtained by assuming the contrary  in \cite{HLW15}, where we neither showed that $b^*(\mu)=\mathcal{S}^{-2}\mu$ nor gave an estimate to $b^*(\mu)$.  Noting that another partial answer to Naimen's open question was given in \cite{HLW151} in which  we obtained the following result:
\begin{theorem}\label{thm0003}  {\em (\cite[Theorem~1.4]{HLW151})} \
Let $N=4$, $a,\lambda>0$ and $\mu>b\mathcal{S}^2$.  If $\Omega=\mathbb{B}_R$ is a ball, then Problem~$(\mathcal{P}_{a,b,\lambda,\mu})$ has a radial solution.
\end{theorem}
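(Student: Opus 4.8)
The plan is to keep the whole Kirchhoff structure intact but, via a one‑parameter rescaling tailored to the ball, trade $(\mathcal{P}_{a,b,\lambda,\mu})$ for a classical Brezis--Nirenberg problem on the \emph{unit} ball, whose solvability on a ball is already well understood. Since $\Omega=\mathbb{B}_R\subset\mathbb{R}^4$, I look for a positive radial solution of the form $u(x)=\alpha\,v(x/R)$ with $v>0$ radial on $\mathbb{B}_1$ and $\alpha>0$ a scaling parameter to be fixed last. Writing $t:=\|\nabla u\|_{L^2(\mathbb{B}_R)}^2=\alpha^2R^2\|\nabla v\|_{L^2(\mathbb{B}_1)}^2$ and $\beta:=\alpha^2R^2$ (so that $t=\beta\|\nabla v\|^2$), the equation in $(\mathcal{P}_{a,b,\lambda,\mu})$ becomes, after multiplying by $R^2/\alpha$ and dividing by $a+bt$,
\begin{equation*}
-\Delta v=\frac{\lambda R^{4-q}\beta^{(q-2)/2}}{a+b\beta\|\nabla v\|^2}\,v^{q-1}+\frac{\mu\beta}{a+b\beta\|\nabla v\|^2}\,v^{2^*-1}\qquad\text{in }\mathbb{B}_1 .
\end{equation*}
I then \emph{normalise the critical coefficient to $1$}, i.e.\ impose $\beta(\mu-b\|\nabla v\|^2)=a$ (which forces $\mu>b\|\nabla v\|^2$); with this choice $v$ must solve the Brezis--Nirenberg-type problem
\begin{equation*}
-\Delta v=\Lambda\,v^{q-1}+v^{2^*-1},\quad v>0\ \text{in }\mathbb{B}_1,\quad v=0\ \text{on }\partial\mathbb{B}_1,\qquad \Lambda:=\frac{\lambda R^{4-q}}{\mu}\,\beta^{(q-4)/2}.
\end{equation*}

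Next I call on the classical theory of this local problem on the unit ball. For $2<q<2^*$ it has a least‑energy positive solution $v_\Lambda$ for every $\Lambda>0$, and for $q=2$ precisely for $\Lambda\in(0,\sigma_1(\mathbb{B}_1))$, where $\sigma_1(\mathbb{B}_1)$ is the first Dirichlet eigenvalue of $-\Delta$; by the moving‑plane method $v_\Lambda$ is radial (and for $q=2$ it is the unique positive solution), so $\Phi(\Lambda):=\|\nabla v_\Lambda\|_{L^2(\mathbb{B}_1)}^2$ is a well‑defined continuous function of $\Lambda$. Its endpoint asymptotics are the decisive input: as $\Lambda\to0^+$ the solution $v_\Lambda$ concentrates and, after rescaling, converges to the Aubin--Talenti instanton $U$ on $\mathbb{R}^4$ normalised by $-\Delta U=U^{2^*-1}$, so that
\begin{equation*}
\Phi(\Lambda)\ \longrightarrow\ \|\nabla U\|_{L^2(\mathbb{R}^4)}^2=\mathcal{S}^2\qquad(\Lambda\to0^+),
\end{equation*}
while $\Phi(\Lambda)\to0$ as $\Lambda\to\sigma_1(\mathbb{B}_1)^-$ (case $q=2$), respectively as $\Lambda\to+\infty$ (case $q>2$), because the corresponding least‑energy level tends to $0$. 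This is exactly where the hypothesis $\mu>b\mathcal{S}^2=b\,\Phi(0^+)$ enters.

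Finally I close the loop. Substituting $\beta=a/(\mu-b\Phi(\Lambda))$ into the definition of $\Lambda$ collapses everything to the single scalar equation
\begin{equation*}
\Lambda=\frac{\lambda R^{4-q}}{\mu\,a^{(4-q)/2}}\,\bigl(\mu-b\Phi(\Lambda)\bigr)^{(4-q)/2}
\end{equation*}
(which for $q=2$ reads $\Phi(\Lambda)=\tfrac{\mu}{b}\bigl(1-\tfrac{a\Lambda}{\lambda R^2}\bigr)$). Since $\Phi(0^+)=\mathcal{S}^2<\mu/b$ while $\Phi$ tends to $0$ at the other end of the admissible $\Lambda$-interval, the two sides of this equation compare differently at the two endpoints, so the intermediate value theorem produces a solution $\Lambda^*$ (in particular $\mu>b\Phi(\Lambda^*)$ holds automatically). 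Putting $\beta^*:=a/(\mu-b\Phi(\Lambda^*))>0$, $\alpha^*:=\sqrt{\beta^*}/R$ and $u^*:=\alpha^*\,v_{\Lambda^*}(\cdot/R)$, a direct substitution --- using $t^*=\beta^*\Phi(\Lambda^*)=\|\nabla u^*\|_{L^2(\mathbb{B}_R)}^2$ --- shows that $u^*$ is a positive radial solution of $(\mathcal{P}_{a,b,\lambda,\mu})$.

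The part I expect to be the real work is the soft‑analysis backbone of the middle step: establishing that on the ball the least‑energy positive solution of the local problem is radial, depends continuously on $\Lambda$ with no loss of compactness as $\Lambda$ runs over the whole admissible interval, and has Dirichlet norm with the precise limit $\mathcal{S}^2$ --- this last fact being what makes $\mu>b\mathcal{S}^2$ the sharp threshold rather than some weaker condition. A further delicate point, specific to $q=2$, is the regime in which $\lambda$ is not small compared with $a\sigma_1(\mathbb{B}_R)$: then the admissible window for $\Lambda$ is squeezed towards $\sigma_1(\mathbb{B}_1)$, the naive endpoint comparison of the scalar equation can fail to straddle a root, and one needs either a sharper description of $\Phi$ near $\sigma_1(\mathbb{B}_1)$ or a different normalisation of the rescaling to still force a crossing; for $q>2$ no such difficulty arises and the argument above covers all $\lambda>0$ directly.
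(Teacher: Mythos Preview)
This theorem is not proved in the present paper; it is quoted verbatim from \cite{HLW151}, and the only information the paper supplies about its proof is that it is obtained ``by introducing a new scaling technique'' which ``heavily depends on the fact of $\Omega=\mathbb{B}_R$''. Your proposal is precisely such a scaling argument --- rescale $u(x)=\alpha\,v(x/R)$ to convert $(\mathcal{P}_{a,b,\lambda,\mu})$ into a classical Brezis--Nirenberg problem on the unit ball and then close a one-dimensional fixed-point equation in the scaling parameter --- so you have correctly identified the intended mechanism and the reason the ball hypothesis is essential.

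That said, two of the points you flag as ``the real work'' are genuine obstacles rather than routine soft analysis, and you should be aware they are not fully resolved by your sketch. First, for $2<q<4$ the least-energy positive solution of $-\Delta v=\Lambda v^{q-1}+v^{3}$ on $\mathbb{B}_1$ is not known to be unique, so $\Phi(\Lambda)$ need not be single-valued and continuous; one needs either a selection argument or to work with the set-valued map and a connectedness argument to push the intermediate-value step through. Second, for $q=2$ your scalar equation reads $\Phi(\Lambda)=\tfrac{\mu}{b}\bigl(1-\tfrac{a\Lambda}{\lambda R^2}\bigr)$, and when $\lambda\ge a\sigma_1(\mathbb{B}_R)$ the right-hand side at $\Lambda=\sigma_1(\mathbb{B}_1)^-$ is nonnegative while the left-hand side tends to $0$, so both endpoints give the same sign and the naive IVT fails --- you note this but do not resolve it, whereas the theorem as stated covers all $\lambda>0$. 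In \cite{HLW151} this regime is handled, so if you want a complete proof you will need an additional idea here (e.g.\ a different normalisation of the critical coefficient, or exploiting that $\Phi$ is not monotone).
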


It follows from Theorem~\ref{thm0003} that the conditions in Theorem~\ref{thm0001} that $\mu<2b\mathcal{S}^2$ and $(C1)$--$(C3)$ are not necessary if $\Omega$ is a ball.  However, our scaling technique in \cite{HLW151} heavily depends on the fact of $\Omega=\mathbb{B}_R$ and it may be invalid for a general bounded domain $\Omega$.

In present paper, we will also investigate Naimen's open question once more and give an almost positive answer to it, which also partially improves Theorems~\ref{thm0002} and \ref{thm0003}.

Before we state our result to Problem~$(\mathcal{P}_{a,b,\lambda,\mu})$, we will introduce some notations.  Let $\mathcal{E}(u)$ be given in \eqref{eq01}, then for every $u\in\h$, the function $T_u(t):=\mathcal{E}(tu)$ ($t\in\mathbb{R}$) is twice continuously differentiable on $\mathbb{R}$.  Furthermore, $T_u'(t)=0$ if and only if $tu\in\mathcal{N}$, where $\mathcal{N}$ is the Nehari manifold of $\mathcal{E}(u)$ defined by
\begin{eqnarray*}
\mathcal{N}:=\{u\in\h\backslash\{0\}\mid \mathcal{E}'(u)u=0\}.
\end{eqnarray*}
In particular, $T_u'(1)=0$ if and only if $u\in\mathcal{N}$.  Let
\begin{eqnarray*}
\mathcal{N}^-:=\{u\in\mathcal{N}\mid T_u''(1)<0\}.
\end{eqnarray*}
Then our result to Problem~$(\mathcal{P}_{a,b,\lambda,\mu})$ for $N=4$ can be stated as follows.
\begin{theorem}\label{thm0004}
Let $N=4$, $a, b, \lambda>0$ and $\mu>b\mathcal{S}^2$.
\begin{enumerate}
\item[$(a)$] If $q=2$ then for each $\lambda<a\sigma_1$, Problem~$(\mathcal{P}_{a,b,\lambda,\mu})$ has a ground state solution which minimizes the functional $\mathcal{E}(u)$ on $\mathcal{N}$.  Moreover, this ground state solution is also a mountain pass solution.
\item[$(b)$] If $2<q<4$ and
\begin{eqnarray}\label{eq0013}
b<\frac{(q-2)^2(\mu(a\sigma_1)^{\frac{4-q}{q-2}}+\lambda^{\frac{2}{q-2}})}{4\lambda^{\frac{2}{q-2}}+(q-2)^2\mu(a\sigma_1)^{\frac{4-q}{q-2}}}\mu\mathcal{S}^{-2},
\end{eqnarray}
then Problem~$(\mathcal{P}_{a,b,\lambda,\mu})$ has a solution which minimizes the functional $\mathcal{E}(u)$ on $\mathcal{N}^-$.
\end{enumerate}
\end{theorem}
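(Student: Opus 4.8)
The plan is to produce both solutions as constrained minimizers on the Nehari manifold $\mathcal{N}$ (for $(a)$) and on its branch $\mathcal{N}^-$ (for $(b)$), and to recover the compactness that is lost in this critical dimension by showing that the corresponding minimizing level lies strictly below the first non-compact level
\[
c^*:=\frac{a^2\mathcal{S}^2}{4(\mu-b\mathcal{S}^2)},
\]
a well-defined positive number precisely because $\mu>b\mathcal{S}^2$. The compactness engine is the following pair of relations, valid for any $(PS)$ sequence $u_n\rightharpoonup u$ in $\h$ for $\mathcal{E}$ at level $c$: setting $v_n:=u_n-u$ and, along a subsequence, $B:=\lim_n\|\nabla v_n\|_{L^2(\Omega)}^2$, one combines $v_n\to0$ in $L^q(\Omega)$ (since $q<2^*$), the Brezis--Lieb lemma applied to $\|\nabla\cdot\|_{L^2(\Omega)}^2$ and to $\|\cdot\|_{L^{2^*}(\Omega)}^{2^*}$, the equation $-(a+b(\|\nabla u\|_{L^2(\Omega)}^2+B))\Delta u=\lambda u^{q-1}+\mu u^{2^*-1}$ obtained by passing to the limit in $\mathcal{E}'(u_n)\to0$, and the Sobolev inequality $\|v_n\|_{L^{2^*}(\Omega)}^2\le\mathcal{S}^{-1}\|\nabla v_n\|_{L^2(\Omega)}^2$, to obtain
\[
c=\mathcal{E}(u)+\tfrac14 B\bigl(a+b\|\nabla u\|_{L^2(\Omega)}^2\bigr),\qquad B\ge\frac{(a+b\|\nabla u\|_{L^2(\Omega)}^2)\mathcal{S}^2}{\mu-b\mathcal{S}^2}\ \text{ whenever }B>0.
\]

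For part $(a)$, since $q=2$ and $\lambda<a\sigma_1$, Poincar\'e's inequality gives $a\|\nabla u\|_{L^2(\Omega)}^2-\lambda\|u\|_{L^2(\Omega)}^2\ge(a-\lambda\sigma_1^{-1})\|\nabla u\|_{L^2(\Omega)}^2>0$ for $u\ne0$; hence on $\mathcal{N}$ one has $\mathcal{E}(u)=\tfrac14(a\|\nabla u\|_{L^2(\Omega)}^2-\lambda\|u\|_{L^2(\Omega)}^2)$ and $T_u''(1)=2(b\|\nabla u\|_{L^2(\Omega)}^4-\mu\|u\|_{L^{2^*}(\Omega)}^{2^*})=-2(a\|\nabla u\|_{L^2(\Omega)}^2-\lambda\|u\|_{L^2(\Omega)}^2)<0$, so $\mathcal{N}=\mathcal{N}^-$, it is a $C^1$-manifold on which $|T_u''(1)|$ is bounded away from $0$, and the Nehari identity together with $\|u\|_{L^{2^*}(\Omega)}^{2^*}\le\mathcal{S}^{-2}\|\nabla u\|_{L^2(\Omega)}^4$ shows $\mathcal{N}\ne\emptyset$ and $m:=\inf_{\mathcal{N}}\mathcal{E}>0$. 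I would then prove $m<c^*$ by evaluating $\mathcal{E}$ along the rays $t\mapsto\mathcal{E}(t\psi_\ve)$, where $\psi_\ve$ is a truncated Aubin--Talenti instanton concentrating at an interior point: for $N=4$ one has $\|\nabla\psi_\ve\|_{L^2(\Omega)}^2=\mathcal{S}+O(\ve^2)$, $\|\psi_\ve\|_{L^{2^*}(\Omega)}^{2^*}=1+O(\ve^4)$, but $\|\psi_\ve\|_{L^2(\Omega)}^2\sim d\,\ve^2|\ln\ve|$ with $d>0$, so the term $-\tfrac\lambda2 t^2\|\psi_\ve\|_{L^2(\Omega)}^2$ dominates the $O(\ve^2)$ corrections, $\max_{t\ge0}\mathcal{E}(t\psi_\ve)<c^*$ for $\ve$ small, and the maximum is attained at a point of $\mathcal{N}$. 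Finally, a minimizing sequence for $\mathcal{E}$ on $\mathcal{N}$ can be taken (via Ekeland's principle, using $\inf_{\mathcal{N}}|T_u''(1)|>0$) to be a $(PS)$ sequence for $\mathcal{E}$; its weak limit $u$ satisfies, by the relations above specialized through the $q=2$ equation for $u$, $m=\tfrac14(a\|\nabla u\|_{L^2(\Omega)}^2-\lambda\|u\|_{L^2(\Omega)}^2)+\tfrac14 aB$, whence $B>0$ or $u=0$ would force $m\ge\tfrac14 aB\ge c^*$, impossible; so $u_n\to u$ in $\h$, $\mathcal{E}(u)=m$, and the Lagrange multiplier rule on $\mathcal{N}=\mathcal{N}^-$ yields $\mathcal{E}'(u)=0$. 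Replacing $u$ by $|u|$ and invoking elliptic regularity and the strong maximum principle gives $u>0$, and the standard coincidence of the mountain pass level of $\mathcal{E}$ with $\inf_{\mathcal{N}}\mathcal{E}$ (the fibering map $t\mapsto\mathcal{E}(tu)$ having a unique positive maximum when it has one at all) shows this ground state is also a mountain pass solution.

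Part $(b)$ follows the same pattern on $\mathcal{N}^-$. One first checks that $\mathcal{N}^-\ne\emptyset$ (again using $\mu>b\mathcal{S}^2$), that it is a $C^1$-manifold away from $\mathcal{N}^0:=\{u\in\mathcal{N}\mid T_u''(1)=0\}$, that $\mathcal{E}(u)>\tfrac{a(q-2)}{4q}\|\nabla u\|_{L^2(\Omega)}^2$ on $\mathcal{N}^-$, and that, since $q>2$, the Nehari identity rewritten as $a\le C\lambda\|\nabla u\|_{L^2(\Omega)}^{q-2}+(\mu\mathcal{S}^{-2}-b)\|\nabla u\|_{L^2(\Omega)}^2$ forces $\|\nabla u\|_{L^2(\Omega)}$ to be bounded below on $\mathcal{N}$; hence $m^-:=\inf_{\mathcal{N}^-}\mathcal{E}>0$ and minimizing sequences are bounded. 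Again $m^-<c^*$ by the ray test through truncated instantons: for $N=4$ and $q>2$ one has $\|\psi_\ve\|_{L^q(\Omega)}^q\sim K\ve^{4-q}$ with $K>0$ and $4-q<2$, so the subcritical term $-\tfrac\lambda q t^q\|\psi_\ve\|_{L^q(\Omega)}^q$ beats the $O(\ve^2)$ corrections, $\max_{t\ge0}\mathcal{E}(t\psi_\ve)<c^*$, and the maximizer lies in $\mathcal{N}^-$. Hypothesis \eqref{eq0013} is exactly what closes the compactness step. In a loss-of-compactness scenario ($B>0$) the weak limit $u$ is a nonzero solution of the modified problem $-(a+bA)\Delta u=\lambda u^{q-1}+\mu u^{2^*-1}$ with $A=\|\nabla u\|_{L^2(\Omega)}^2+B$, for which the corresponding Nehari identity gives $\mathcal{E}(u)=\tfrac14\|\nabla u\|_{L^2(\Omega)}^2(a-bB)-\tfrac{(4-q)\lambda}{4q}\|u\|_{L^q(\Omega)}^q$; on the other hand, from $m^-=\mathcal{E}(u)+\tfrac14 B(a+b\|\nabla u\|_{L^2(\Omega)}^2)$, the lower bound on $B$, and $m^-<c^*$, one reads off $\mathcal{E}(u)<0$. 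Interpolating $\|u\|_{L^q(\Omega)}^q\le\sigma_1^{-(4-q)/2}\mathcal{S}^{-(q-2)}\|\nabla u\|_{L^2(\Omega)}^{q}$ between $L^2(\Omega)$ and $L^{2^*}(\Omega)$ (which is why both $\sigma_1$ and $\mathcal{S}$ enter) and optimizing the resulting inequality in $\|\nabla u\|_{L^2(\Omega)}^2$, one finds that \eqref{eq0013} is precisely the condition forcing $\mathcal{E}(u)\ge0$ for every such limit --- a contradiction; hence $B=0$. The same circle of estimates also yields $\inf_{\mathcal{N}^0}\mathcal{E}\ge c^*>m^-$ (here $\mathcal{E}=\tfrac{a(q-2)}{4q}\|\nabla u\|_{L^2(\Omega)}^2$ on $\mathcal{N}^0$ and the constraints bound $\|\nabla u\|_{L^2(\Omega)}$ from below there), so minimizing sequences stay inside $\mathcal{N}^-$ and their now-compact limit is an interior minimizer; the Lagrange rule, elliptic regularity and the strong maximum principle then produce a positive solution on $\mathcal{N}^-$.

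The main obstacle throughout is exactly the failure of the Palais--Smale condition inherent to $N\ge4$: identifying $c^*$ (respectively the energies of the modified problems) as the first non-compact level and placing the minimax value strictly below it. For $q=2$ this rests on the logarithmic gain $\|\psi_\ve\|_{L^2(\Omega)}^2\sim\ve^2|\ln\ve|$, the mechanism that makes $N=4$ the borderline dimension, exactly as in the Brezis--Nirenberg problem; for $2<q<4$ the gain $\ve^{4-q}$ plays that role, but the residual weak limit may then carry its own concentrating bubble, and the quantitative balance excluding this --- obtained by optimizing the Sobolev and Poincar\'e estimates above --- is precisely \eqref{eq0013}. The remaining ingredients --- non-emptiness and the manifold structure of $\mathcal{N}$ and $\mathcal{N}^-$, the Ekeland/Lagrange passage from constrained to free critical points, the Brezis--Lieb splitting, elliptic regularity and the maximum principle for positivity --- are routine.
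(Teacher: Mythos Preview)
Your treatment of part $(a)$ matches the paper's: both minimize on $\mathcal{N}=\mathcal{N}^-$, use the instanton estimate to push $m^-$ below $c^*=\frac{a^2\mathcal{S}^2}{4(\mu-b\mathcal{S}^2)}$, and recover compactness from Naimen's splitting. Good.

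For part $(b)$, however, you have misplaced the role of \eqref{eq0013}. In the paper it is used \emph{solely} to show the Ekeland minimizing sequence on $\mathcal{N}^-$ is a genuine $(PS)$ sequence (Lemma~2.4): assuming $T_{u_n}''(1)\to0$ one combines a Young/Poincar\'e interpolation $\lambda\|u_n\|_{L^q}^q\le a\|\nabla u_n\|_{L^2}^2+\lambda^{2/(q-2)}(a\sigma_1)^{-(4-q)/(q-2)}\|u_n\|_{L^4}^4$ with the constraint to get a lower bound on $\|\nabla u_n\|_{L^2}^2$, and then $m^-<c^*$ together with $\mathcal{E}(u_n)>\frac{a(q-2)}{4q}\|\nabla u_n\|_{L^2}^2$ yields the negation of \eqref{eq0013}. (This is your ``$\inf_{\mathcal{N}^0}\mathcal{E}\ge c^*$'' remark, but it is the \emph{central} use, not a side observation, and it requires that particular Young splitting --- your H\"older interpolation $\|u\|_{L^q}^q\le\sigma_1^{-(4-q)/2}\mathcal{S}^{-(q-2)}\|\nabla u\|_{L^2}^q$ would not produce the same constant.)

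Your proposed compactness argument --- that \eqref{eq0013} forces $\mathcal{E}(u_0)\ge0$ for the weak limit --- has a gap. In your own formula $\mathcal{E}(u_0)=\tfrac14(a-bB)\|\nabla u_0\|_{L^2}^2-\tfrac{(4-q)\lambda}{4q}\|u_0\|_{L^q}^q$ the second term is nonpositive and the first becomes negative once $bB>a$; nothing in the interpolation you quote controls $B$ from above in a way that makes the right-hand side nonnegative. The paper does \emph{not} attempt this. Instead (Proposition~2.2) it exploits the fibering structure: since $u_n\in\mathcal{N}^-$ one has $\mathcal{E}(u_n)\ge\mathcal{E}(t u_n)$ for $t$ near $1$; one scales to the value $t_0<1$ at which each bubble contributes exactly $c^*$, reads off $\mathcal{E}(t_0 u_0)<0$, and then Lemma~2.1 produces $\tilde t_*(u_0)<t_0$ with $\tilde t_*(u_0)u_0\in\mathcal{N}^-$. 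Rescaling to $\tilde t_*(u_0)$ --- where the bubble functions still contribute a strictly positive amount --- gives $m^-\ge \mathcal{E}(\tilde t_*(u_0)u_0)+\text{(positive)}>m^-$, the desired contradiction. This step uses only $m^-<c^*$ and the fibering lemma, not \eqref{eq0013}.
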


\begin{remark} {\em
\begin{enumerate}
\item[(1)] From $(a)$ of Theorem~\ref{thm0004}, we can see that $(\mathcal{P}_{a,b,\lambda,\mu})$ also has a ground state solution for $N=4$ and $q=2$ with $\lambda<a\sigma_1$ and $\mu>b\mathcal{S}^2$.  On the other hand, it was known that $(\mathcal{P}_{a,b,\lambda,\mu})$ has no solutions for $N=4$ and $q=2$ with $\lambda<a\sigma_1$ and $\mu<b\mathcal{S}^2$ (cf. \cite[Theorem~1.2]{HLW15}), hence the question whether $(\mathcal{P}_{a,b,\lambda,\mu})$ always has a ground state solution for $N=4$ and $q=2$ remains the situation that $\lambda>a\sigma_1$ and $\mu>b\mathcal{S}^2$ to be considered.  However, in this situation, we observe that the functional $\mathcal{E}(u)$ is unbounded from below and indefinite in $\h$,
thus the treatments for the Kirchhoff term $b\|\nabla u\|_{L^2(\Omega)}^4$ must be different from those in this paper and our previous work \cite{HLW15}.
Therefore, how to find out a ground state solution of $(\mathcal{P}_{a,b,\lambda,\mu})$ for $N=4$ and $q=2$ with $\lambda>a\sigma_1$ and $\mu>b\mathcal{S}^2$ is an interesting problem for our future.
\item[(2)]  For the local problem~$(\mathcal{P}_{a,0,\lambda,\mu})$, it is well known that the ground state solution of $(\mathcal{P}_{a,0,\lambda,\mu})$ for $N=4$ and $q=2$ with $\lambda<a\sigma_1$ not only minimizes the functional $\mathcal{E}(u)$ both in $\mathcal{N}$ and $\mathcal{K}$ but also is a mountain pass solution.  Now, by $(a)$ of Theorem~\ref{thm0004}, we can see that the ground state solution of $(\mathcal{P}_{a,b,\lambda,\mu})$ for $N=4$ and $q=2$ with $\lambda<a\sigma_1$ and $\mu>b\mathcal{S}^2$ has the same property.  Due to this fact, $(a)$ of Theorem~\ref{thm0004} can be seen as a complement of Neimen's result in \cite{N14}.
\end{enumerate}}
\end{remark}

\begin{remark}{\em
\begin{enumerate}
\item[(1)] Denote $\tilde{b}(\mu):= \frac{(q-2)^2(\mu(a\sigma_1)^{\frac{4-q}{q-2}}+\lambda^{\frac{2}{q-2}})}{4\lambda^{\frac{2}{q-2}}+(q-2)^2\mu(a\sigma_1)^{\frac{4-q}{q-2}}}$. Clearly $\tilde{b}(\mu)<1$ if $2<q<4$ and it follows from $(b)$ of Theorem~\ref{thm0004} that $b^*(\mu)$ of Theorem~\ref{thm0003} satisfies $b^*(\mu)\geq \tilde{b}(\mu)
    \mu\mathcal{S}^{-2}$, so that $b^*(\mu)$ has an estimate: $\tilde{b}(\mu)\mu\mathcal{S}^{-2}\leq b^*(\mu)\leq \mu\mathcal{S}^{-2}$.  In this sense, $(b)$ of Theorem~\ref{thm0004} partially improves Theorem~\ref{thm0002}.  Also $(b)$ of Theorem~\ref{thm0004}  partially improves Theorem~\ref{thm0003} since the condition $\Omega=\mathbb{B}_R$ in Theorem~\ref{thm0003} is not needed if \eqref{eq0013} holds.
\item[(2)]  Obviously, for each $a,\lambda>0$, it holds that $\tilde{b}(\mu)\to1$ as $\mu\to+\infty$, $\frac{(q-2)^2}{4}<\tilde{b}(\mu)$ and $\tilde{b}(\mu)\to\frac{(q-2)^2}{4}$ as $\mu\to0^+$.  So that we can summarize Theorems~\ref{thm0002}--\ref{thm0003} and $(b)$ of Theorem~\ref{thm0004} into the following figure and table.

\begin{figure}[htbp]
\centerline{\includegraphics[width=8cm]{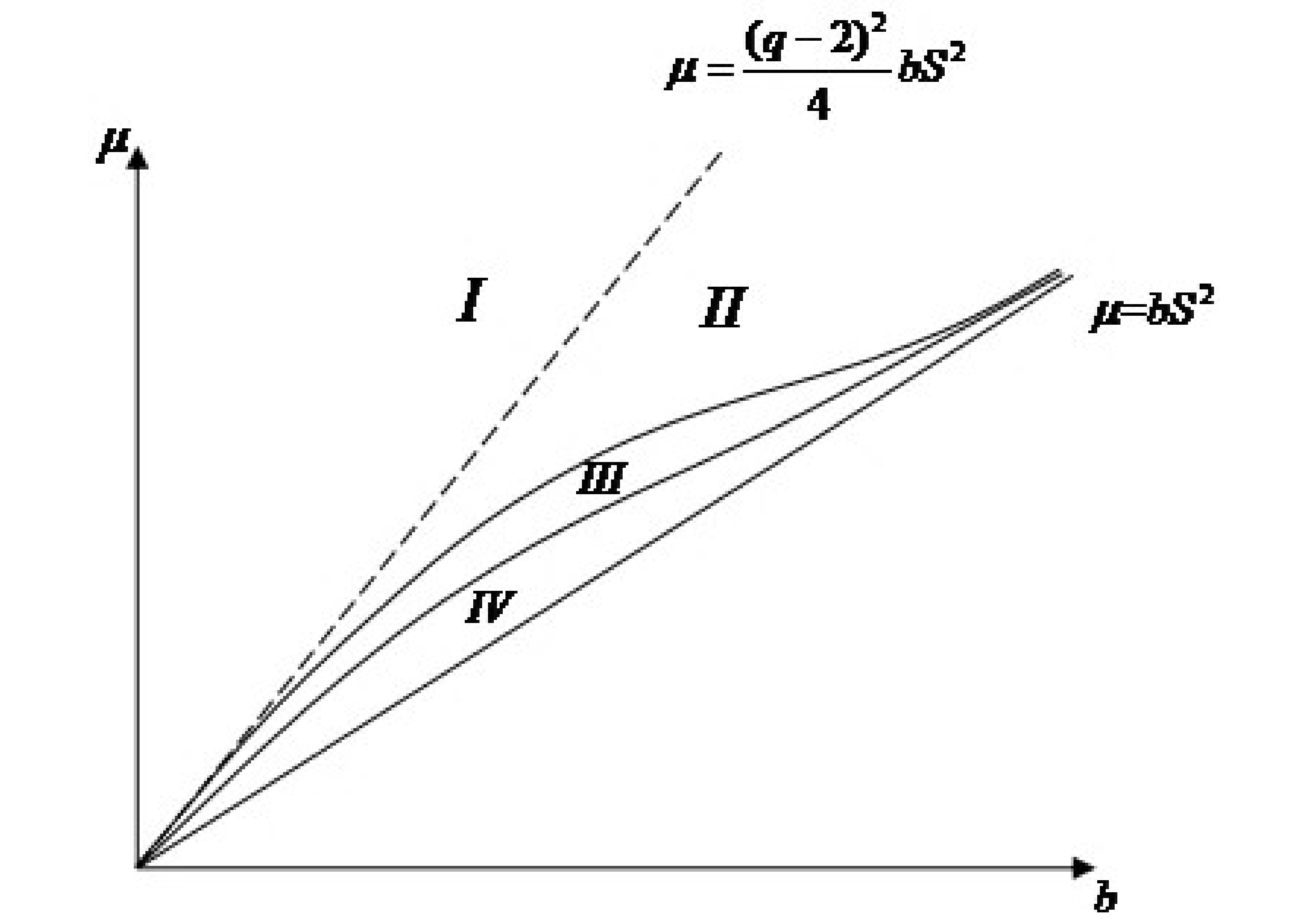}}
\caption{The plane of $(b,\mu)$}\label{Fig01}
\end{figure}

\begin{center}
\begin{tabular}{|l|c|c|}\hline
Ranges of $(b,\mu)$ & Corresponding regions  & The answer of  \\
& in Fig.~\ref{Fig01} & Naimen's open question \\
\cline{1-3} $b<\frac{(q-2)^2}{4}\mu\mathcal{S}^{-2}$ & I & Positive\\
\cline{1-3} $\frac{(q-2)^2}{4}\mu\mathcal{S}^{-2}\leq b<\tilde{b}(\mu)\mu\mathcal{S}^{-2}$ & II & Positive\\
\cline{1-3}& & Partial positive: need \\
$\tilde{b}(\mu)\mu\mathcal{S}^{-2}\leq b<b^*(\mu)$& III & further condition $\Omega=\mathbb{B}_R$ or\\
& & one of $(C1)$--$(C3)$ \\
\cline{1-3} & & Partial positive: need \\
 $b^*(\mu)\leq b<\mu\mathcal{S}^{-2}$ &IV & further condition $\Omega=\mathbb{B}_R$\\
\hline
\end{tabular}
\end{center}
 By the above table, we can see that Naimen's open question has a positive answer in the regions~I and II of Fig.~\ref{Fig01} and has a partial answer in the regions~III and IV of Fig.~\ref{Fig01}.  Thus, it seems that Theorems~\ref{thm0002}--\ref{thm0003} and $(b)$ of Theorem~\ref{thm0004} give an almost positive answer to Naimen's open question.
\end{enumerate}}
\end{remark}

\subsection{The case $N\geq 5$}
To the best of our knowledge, there is few study on $(\mathcal{P}_{a,b,\lambda,\mu})$ for $N\geq5$ in literatures except our very recent work \cite{HLW151}, where, by introducing a new scaling technique, we obtained some existence results to $(\mathcal{P}_{a,b,\lambda,\mu})$ on a ball for $N\geq5$.  As we stated in the above, our scaling technique in \cite{HLW151} heavily dependents the fact that $\Omega=\mathbb{B}_R$ and it may be invalid for a general bounded domain. We also remark that, as observed by Neimen in \cite{N14}, the energy values of bubbles to the $(PS)$ sequence may be negative at some low energy values.  This fact leads that it is very hard to find out a compact $(PS)$ sequence to the corresponding functional $\mathcal{E}(u)$  of $(\mathcal{P}_{a,b,\lambda,\mu})$ on a general bounded domain for $N\geq5$, which seems to be the main difficulty that prevent one to obtain a solution to $(\mathcal{P}_{a,b,\lambda,\mu})$ on a general bounded domain for $N\geq5$ by using the variational method.  In this paper, by finding out a special bounded $(PS)$ sequence of  $\mathcal{E}(u)$  and analyzing carefully the compactness of this $(PS)$ sequence, we give some existence and nonexistence results to $(\mathcal{P}_{a,b,\lambda,\mu})$ for $N\geq5$. Now, let us state our results in the following.
\begin{theorem}\label{thm0005}
Let $N\geq5$.
\begin{enumerate}
\item If $q=2$ and $\lambda<a\sigma_1$ then Problem~$(\mathcal{P}_{a,b,\lambda,\mu})$ has a solution which minimizes the functional $\mathcal{E}(u)$ on $\mathcal{N}^-$ under the following conditions
\begin{enumerate}
\item[$(D0)$] $\frac{N(2^*-2)^2a}{42^*(4-2^*)b}(1-\frac{\lambda}{a\sigma_1})>\mathcal{S}^{\frac N2}\bigg[\frac{8a}{(2^*+2)(4-2^*)\mu}\bigg]^{\frac{2}{2^*-2}}$;
\item[$(D1)$] $a-\frac{(4-2^*)\mu}{2}\bigg[\frac{(2^*-2)\mu}{2b\mathcal{S}^{\frac N2}}\bigg]^{\frac{2^*-2}{4-2^*}}<0$.
\end{enumerate}
Moreover, Problem~$(\mathcal{P}_{a,b,\lambda,\mu})$ has no solution if
\begin{eqnarray*}
a(1-\frac{\lambda}{a\sigma_1})\bigg[\frac{a\mathcal{S}^{\frac{2^*}{2}}}{\mu}(1-\frac{\lambda}{a\sigma_1})\bigg]^{\frac{2}{2^*-2}}>\bigg[\mu\mathcal{S}^{-\frac{2^*}{2}}b^{-\frac{2^*}{4}}\bigg]^{\frac{4}{4-2^*}}.
\end{eqnarray*}
\item If $q>2$ then Problem~$(\mathcal{P}_{a,b,\lambda,\mu})$ has a solution which minimizes the functional $\mathcal{E}(u)$ on $\mathcal{N}^-$ under the following conditions
\begin{enumerate}
\item[$(D1)$] $a-\frac{(4-2^*)\mu}{2}\bigg[\frac{(2^*-2)\mu}{2b\mathcal{S}^{\frac N2}}\bigg]^{\frac{2^*-2}{4-2^*}}<0$;
\item[$(D2)$] $a-\frac{(4-2^*)(2^*+2-q)}{2(4-q)}\bigg[\frac{(2^*-2)(2^*-q)\mu}{2(4-q)b\mathcal{S}^{\frac N2}}\bigg]^{\frac{2^*-2}{4-2^*}}<0$;
\item[$(D3)$] $\frac{4q}{N(q-2)}\mathcal{S}^{\frac N2}\bigg[\frac{8a}{(2^*+2)(4-2^*)\mu}\bigg]^{\frac{2}{2^*-2}}<\frac{(q-2)a}{(4-q)b}$.
\end{enumerate}
Moreover, Problem~$(\mathcal{P}_{a,b,\lambda,\mu})$ has no solution if
\begin{eqnarray*}
b\geq\bigg(\lambda \mathcal{S}_q^{-\frac{q}{2}}(\frac{2}{a})^{4-q}\bigg)^{\frac{2}{q-2}}+\bigg(\mu \mathcal{S}^{-\frac{2^*}{2}}(\frac{2}{a})^{4-2^*}\bigg)^{\frac{2}{2^*-2}},
\end{eqnarray*}
where $
\mathcal{S}_q:=\inf\{\|\nabla u\|_{L^2(\Omega)}^2\mid u\in H_0^{1}(\Omega), \|u\|_{L^{q}(\Omega)}^2=1\}
$.
\end{enumerate}
\end{theorem}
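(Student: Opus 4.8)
\noindent The plan is to realize the solution as a minimizer of $\mathcal{E}$ over $\mathcal{N}^-$. I work throughout with the fibering maps $T_u(t)=\mathcal{E}(tu)$, $t>0$. Since $N\ge5$ gives $2<2^*<4$, the Kirchhoff term is the top-order term of $T_u$, so $T_u(t)\to+\infty$ as $t\to+\infty$; writing $T_u'(t)=t\,g_u(t)$ with
\[
g_u(t)=a\|\nabla u\|_{L^2(\Omega)}^2+bt^2\|\nabla u\|_{L^2(\Omega)}^4-\lambda t^{q-2}\|u\|_{L^q(\Omega)}^q-\mu t^{2^*-2}\|u\|_{L^{2^*}(\Omega)}^{2^*},
\]
one has $g_u(0^+)>0$ (using $q=2,\ \lambda<a\sigma_1$ in case (1), or $q>2$ in case (2)) and $g_u(+\infty)=+\infty$, so each $T_u$ is either strictly increasing or increasing--decreasing--increasing, and $\mathcal{N}^-$ is exactly the set of the (unique) local maximum points of the maps of the second type. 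First I would show $\mathcal{N}^-\neq\varnothing$ by testing $g_u$ with a truncated Aubin--Talenti instanton $U_\ve$, $\ve$ small: the $\|U_\ve\|_{L^q(\Omega)}^q$-term is of lower order while $\|U_\ve\|_{L^{2^*}(\Omega)}^{2^*}/\|\nabla U_\ve\|_{L^2(\Omega)}^{2^*}\to\mathcal{S}^{-2^*/2}$, and the minimum over $t$ of $a+bt^2-\mu\mathcal{S}^{-2^*/2}t^{2^*-2}$ is negative exactly when $(D1)$ holds (when $q>2$, $(D2)$ supplies the companion estimate needed to place the fibering maximum). Next, a priori bounds on $\mathcal{N}^-$: subtracting $(q-1)\times$(the Nehari identity) from $T_u''(1)<0$ gives, on $\mathcal{N}^-$,
\[
(2-q)a\|\nabla u\|_{L^2(\Omega)}^2+(4-q)b\|\nabla u\|_{L^2(\Omega)}^4+(q-2^*)\mu\|u\|_{L^{2^*}(\Omega)}^{2^*}<0,
\]
which with the Sobolev (and, for $q=2$, the Poincar\'e) inequality yields an explicit upper bound $\|\nabla u\|_{L^2(\Omega)}^2<M^2$ on $\mathcal{N}^-$; in the case $q=2$ this $M^2$ coincides with the location of the maximum of the ``bubble function'' $\ell\mapsto\mu\mathcal{S}^{-2^*/2}\ell^{(2^*-2)/2}-b\ell-a$, a fact exploited below. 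Combined with the lower bound $\|\nabla u\|_{L^2(\Omega)}\ge R_1>0$ valid on all of $\mathcal{N}$ (Nehari identity plus Sobolev), this makes $c^-:=\inf_{\mathcal{N}^-}\mathcal{E}$ finite and minimizing sequences bounded in $H_0^1(\Omega)$.

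\medskip

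\noindent Applying Ekeland's variational principle on the $C^1$-manifold $\mathcal{N}^-$ --- which is a natural constraint because $T_u''(1)<0$ there, so the Lagrange multiplier vanishes in the limit --- produces a bounded $(PS)_{c^-}$-sequence $u_n\rightharpoonup u$ for $\mathcal{E}$ on $H_0^1(\Omega)$, which may be taken nonnegative. Set $A^2:=\lim\|\nabla u_n\|_{L^2(\Omega)}^2$ and $\ell:=A^2-\|\nabla u\|_{L^2(\Omega)}^2\ge0$. Passing to the limit in $\mathcal{E}'(u_n)\to0$ shows $u$ solves the \emph{perturbed} equation $-(a+bA^2)\Delta u=\lambda u^{q-1}+\mu u^{2^*-1}$, and a Brezis--Lieb splitting gives
\[
\mathcal{E}(u_n)=\mathcal{E}(u)+\Big[\tfrac1N a\ell+\tfrac1N b\|\nabla u\|_{L^2(\Omega)}^2\ell-\tfrac{N-4}{4N}b\ell^2\Big]+o(1),
\]
together with the constraint $\mu\mathcal{S}^{-2^*/2}\ell^{(2^*-2)/2}=a+bA^2$ (from testing against $u_n-u$ and using that the escaping mass is an asymptotic Aubin--Talenti profile). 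The key point is to exclude $\ell>0$: since $\ell\le A^2\le M^2$ and, for $q=2$, $M^2$ is the maximum point of the bubble function, only the \emph{smaller} root of the constraint is admissible, so the bracketed bubble energy admits an explicit lower bound; moreover the perturbed equation forces $\mathcal{E}(u)\ge\tfrac1N\big(a\|\nabla u\|_{L^2(\Omega)}^2-\lambda\|u\|_{L^2(\Omega)}^2\big)-\tfrac{N-4}{4N}b\|\nabla u\|_{L^2(\Omega)}^4-\tfrac{N-2}{2N}b\ell\|\nabla u\|_{L^2(\Omega)}^2$, and conditions $(D0)$ (case $q=2$), respectively $(D2)$--$(D3)$ (case $q>2$), are calibrated exactly so that $\mathcal{E}(u)$ plus the bubble energy strictly exceeds the test-function upper bound for $c^-$ --- a contradiction. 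Hence $\ell=0$, $u_n\to u$ in $H_0^1(\Omega)$, so $A^2=\|\nabla u\|_{L^2(\Omega)}^2$, $u$ solves $(\mathcal{P}_{a,b,\lambda,\mu})$ (positivity by the strong maximum principle), $\mathcal{E}(u)=c^-$, and a look at the fibering geometry places $u$ in $\mathcal{N}^-$ (the degenerate stratum $T_u''(1)=0$ is ruled out by the same conditions).

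\medskip

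\noindent For the nonexistence statements: any positive solution $u$ lies on $\mathcal{N}$, so $a\|\nabla u\|_{L^2(\Omega)}^2+b\|\nabla u\|_{L^2(\Omega)}^4=\lambda\|u\|_{L^q(\Omega)}^q+\mu\|u\|_{L^{2^*}(\Omega)}^{2^*}$; with $t:=\|\nabla u\|_{L^2(\Omega)}^2$ and the inequalities $\|u\|_{L^{2^*}(\Omega)}^{2^*}\le\mathcal{S}^{-2^*/2}t^{2^*/2}$, $\|u\|_{L^q(\Omega)}^q\le\mathcal{S}_q^{-q/2}t^{q/2}$ (plus Poincar\'e when $q=2$) this reduces, for $q>2$, to $a+bt\le\lambda\mathcal{S}_q^{-q/2}t^{q/2-1}+\mu\mathcal{S}^{-2^*/2}t^{2^*/2-1}$, and for $q=2$ to a two-sided bound $R_1^2\le t\le R_2^2$ with $R_1^2=\big(\tfrac{a\mathcal{S}^{2^*/2}}{\mu}(1-\tfrac{\lambda}{a\sigma_1})\big)^{2/(2^*-2)}$ and $R_2^2=\big(\tfrac{\mu}{b\mathcal{S}^{2^*/2}}\big)^{2/(4-2^*)}$. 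When $q=2$ the stated hypothesis is precisely $R_1>R_2$, which is impossible; when $q>2$, splitting $a+bt$ by Young's inequality against the two right-hand terms shows the reduced inequality has no solution $t>0$ once $b$ is as large as stated. In both cases no solution exists.

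\medskip

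\noindent The step I expect to be hardest is the compactness analysis. Because $u\mapsto-b\big(\int_\Omega|\nabla u|^2\big)\Delta u$ is not weakly continuous, the weak limit solves only the perturbed equation, the Brezis--Lieb decomposition of $\mathcal{E}$ carries the extra positive cross term $\tfrac1N b\|\nabla u\|_{L^2(\Omega)}^2\ell$, and --- as Naimen observed --- the bubble energies behave irregularly (they may be small, or negative, for the ``large'' bubble root). The delicate work is (i) using the a priori bound $A^2\le M^2$ on $\mathcal{N}^-$ to exclude that large root, leaving only a bubble carrying a definite energy, and (ii) verifying the strict energy gap between $c^-$ and every ``weak limit $+$ bubble'' level for all admissible limits $u$, including $u\ne0$ --- which is exactly where the precise arithmetic of $(D0)$, resp. $(D2)$--$(D3)$, enters.
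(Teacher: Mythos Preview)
Your outline captures the overall variational strategy, but there is a genuine gap in the step where you produce a Palais--Smale sequence. You write that Ekeland's principle on $\mathcal{N}^-$ gives a $(PS)_{c^-}$-sequence ``because $T_u''(1)<0$ there, so the Lagrange multiplier vanishes in the limit.'' This is not automatic: $\mathcal{N}^-$ is an \emph{open} subset of $\mathcal{N}$, and the Ekeland minimizing sequence $\{u_n\}\subset\mathcal{N}^-$ could drift toward the degenerate stratum $\mathcal{N}^0=\{T_u''(1)=0\}$. To pass from the constrained Ekeland information to $\mathcal{E}'(u_n)\to 0$ in $H^{-1}$ one needs a \emph{uniform} bound $T_{u_n}''(1)\le -d_0<0$, and it is precisely here that conditions $(D0)$ (for $q=2$) and $(D3)$ (for $q>2$) enter in the paper. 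The argument is by contradiction: if $T_{u_n}''(1)=o_n(1)$ then combining this with the Nehari identity forces $\|\nabla u_n\|_{L^2}^2$ to be large (at least $\frac{(2^*-2)a}{(4-2^*)b}(1-\frac{\lambda}{a\sigma_1})$ when $q=2$, and $\frac{(q-2)a}{(4-q)b}$ when $q>2$), while the energy upper bound $m^-<g(\tilde t_2)$ together with the lower bound $\mathcal{E}(u_n)\ge\frac{q-2}{4q}a\|\nabla u_n\|_{L^2}^2$ (resp.\ its $q=2$ analogue) forces $\|\nabla u_n\|_{L^2}^2$ to be small; $(D0)$ and $(D3)$ are exactly the inequalities making these incompatible. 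You have misattributed these conditions to the compactness step, leaving the PS-sequence claim unjustified.

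On compactness your route (Brezis--Lieb plus a single scalar defect mass $\ell$) differs from the paper's, which uses a full profile decomposition and then a fibering trick: one chooses $\hat t\le 1$ so that the bubble part, evaluated at $\hat t$, lands exactly at the local maximum $g(\tilde t_2)$, and then exploits $\mathcal{E}(u_n)\ge\mathcal{E}(\hat t u_n)$ together with the splitting to contradict $m^-<g(\tilde t_2)$. Your idea of using the a~priori bound $A^2\le M^2$ on $\mathcal{N}^-$ to exclude the ``large bubble root'' is a nice observation and could lead to an alternative argument, but note that the constraint you wrote is at best an inequality $(a+bA^2)\ell\le\mu\mathcal{S}^{-2^*/2}\ell^{2^*/2}$ unless you invoke the profile decomposition, and the energy comparison you describe (``$\mathcal{E}(u)$ plus the bubble energy strictly exceeds $c^-$'') needs to be carried out against the threshold $g(\tilde t_2)$, which comes from $(D1)$ alone; $(D0)$ and $(D3)$ play no role here in the paper's proof.

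Finally, in the nonexistence part for $q=2$, the stated hypothesis is \emph{not} the inequality $R_1>R_2$ you describe. The paper keeps the term $a(1-\frac{\lambda}{a\sigma_1})\|\nabla u\|_{L^2}^2$ and bounds $b\|\nabla u\|_{L^2}^4-\mu\mathcal{S}^{-2^*/2}\|\nabla u\|_{L^2}^{2^*}$ below by Young's inequality, which produces the extra factor $a(1-\frac{\lambda}{a\sigma_1})$ on the left and the exponent $b^{-2^*/4}$ on the right; your cruder two-sided bound would yield a different (and generally weaker) sufficient condition.
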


\begin{remark}
\begin{enumerate}
{\em \item[$(1)$]  To the best of our knowledge, Theorem~\ref{thm0005} seems the first result to $(\mathcal{P}_{a,b,\lambda,\mu})$ on a general bounded domain for $N\geq5$.
\item[$(2)$]  As we will see, the main idea of the proof to Theorem~\ref{thm0005} is similar to that of Theorem~\ref{thm0004}.  However, since $2^*<4$ in the cases $N\geq5$, some new ideas and a much more complex calculation are also needed, especially for the compactness of the $(PS)$ sequence.
\item[$(3)$]  Theorem~\ref{thm0005} can be summarized as follows: for fixed $a,\lambda,\mu>0$ and $N\geq5$, $(\mathcal{P}_{a,b,\lambda,\mu})$ has a solution if $b>0$ small enough and $(\mathcal{P}_{a,b,\lambda,\mu})$ has no solution if $b>0$ large enough.  Now, let
    \begin{eqnarray*}
    \widetilde{b}_1=\sup\{b>0\mid (\mathcal{P}_{a,b,\lambda,\mu})\text{ has a solution for }N\geq5\}
    \end{eqnarray*}
    and
    \begin{eqnarray*}
    \widetilde{b}_2=\inf\{b>0\mid (\mathcal{P}_{a,b,\lambda,\mu})\text{ has no solution for }N\geq5\}.
    \end{eqnarray*}
    Then it is clear that $\widetilde{b}_1\leq\widetilde{b}_2$.  However, we are not sure whether it holds that $\widetilde{b}_1=\widetilde{b}_2$ or not.}
\end{enumerate}
\end{remark}

This paper is organized as follows.  In section~2, we will investigate Neimen's open question and prove Theorem~\ref{thm0004}.  In section~3, we will study $(\mathcal{P}_{a,b,\lambda,\mu})$ on a general bounded domain in the cases $N\geq5$ and prove Theorem~\ref{thm0005}.

Through this paper, $o_n(1)$ will always denote the quantities tending towards zero as $n\to\infty$ and $d_i$ will denote the positive constants which may be different.%

\section{The case $N=4$}

\subsection{The Nehari manifold $\mathcal{N}$}
Let us first make some observations on the Nehari manifold $\mathcal{N}$ by the fibering maps $T_u(t)=\mathcal{E}(tu)$.
\begin{lemma}\label{lem0001}
Let $N=4$ and $\chi_u=\{tu\mid t>0\}$.  Then we have the following.
\begin{enumerate}
\item[$(1)$] If $q=2$ and $0<\lambda<a\sigma_1$, then $\chi_u\cap\mathcal{N}=\emptyset$ for $b\|\nabla u\|_{L^2(\Omega)}^4-\mu\|u\|_{L^4(\Omega)}^4\geq0$ and there exists a unique $t_*(u)>0$ such that $t_*(u)u\in\chi_u\cap\mathcal{N}$ for $b\|\nabla u\|_{L^2(\Omega)}^4-\mu\|u\|_{L^4(\Omega)}^4<0$.  Moreover, $\mathcal{N}=\mathcal{N}^-$ and $T_u(1)=\max_{t\geq0}T_u(t)$ for $u\in\mathcal{N}$.
\item[$(2)$] If $q>2$, then $\chi_u\cap\mathcal{N}\not=\emptyset$ if and only if one of the following three cases happens
\begin{enumerate}
\item[$(i)$] $b\|\nabla u\|_{L^2(\Omega)}^4-\mu\|u\|_{L^4(\Omega)}^4\leq0$;
\item[$(ii)$] $b\|\nabla u\|_{L^2(\Omega)}^4-\mu\|u\|_{L^4(\Omega)}^4>0$ and $\mathcal{D}(u)<0$;
\item[$(iii)$] $b\|\nabla u\|_{L^2(\Omega)}^4-\mu\|u\|_{L^4(\Omega)}^4>0$ and $\mathcal{D}(u)=0$,
\end{enumerate}
where
\begin{eqnarray*}\label{eq1102}
\mathcal{D}(u)=a\|\nabla u\|_{L^2(\Omega)}^2-\frac{4-q}{2}\bigg[\frac{(q-2)\lambda\|u\|_{L^q(\Omega)}^q}{2(b\|\nabla u\|_{L^2(\Omega)}^4-\mu\|u\|_{L^4(\Omega)}^4)}\bigg]^{\frac{q-2}{4-q}}\lambda\|u\|_{L^q(\Omega)}^q.
\end{eqnarray*}
Moreover, there exists a unique $0<t_*(u)$ such that $t_*(u)u\in\chi_u\cap\mathcal{N}^-$ if and only if one of $(i)$ and $(ii)$ happens and $T_u(1)=\max_{t\geq0}T_u(t)$ in the case $(i)$ and $T_u(1)=\max_{0\leq t\leq t_0(u)}T_u(t)$ in the case $(ii)$, where
\begin{eqnarray*}
t_0(u)=\bigg[\frac{(q-2)\lambda\|u\|_{L^q(\Omega)}^q}{2(b\|\nabla u\|_{L^2(\Omega)}^4-\mu\|u\|_{L^4(\Omega)}^4)}\bigg]^{\frac{1}{4-q}}.
\end{eqnarray*}
\end{enumerate}
\end{lemma}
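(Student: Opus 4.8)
The whole lemma is a statement about one scalar function. Writing $A:=\|\nabla u\|_{L^2(\Omega)}^2$, $C:=\|u\|_{L^q(\Omega)}^q$ and $D:=\|u\|_{L^4(\Omega)}^4$ (here $2^*=4$ since $N=4$), a direct computation gives
\begin{equation*}
T_u'(t)=t\,g_u(t),\qquad g_u(t):=aA+(bA^2-\mu D)t^{2}-\lambda C\,t^{q-2}\quad(t>0),
\end{equation*}
so that for $t>0$ one has $tu\in\mathcal{N}\iff g_u(t)=0$. Moreover $T_u''(t)=g_u(t)+t\,g_u'(t)$, hence at every positive zero $t_*$ of $g_u$ one has the key identity $T_u''(t_*)=t_*\,g_u'(t_*)$; in particular $t_*u\in\mathcal{N}^-$ if and only if $g_u$ is strictly decreasing at $t_*$. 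So the plan is simply to read off the number of zeros of $g_u$ and the sign of its slope there in each regime, and then to translate the monotonicity intervals of $T_u'=t\,g_u$ into the claimed $\max$ statements.

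Part $(1)$: here $q=2$ and $g_u(t)=(aA-\lambda C)+(bA^2-\mu D)t^{2}$ is affine in $t^{2}$. First I would note that $aA-\lambda C>0$: the variational characterization $\sigma_1\|u\|_{L^2(\Omega)}^2\le\|\nabla u\|_{L^2(\Omega)}^2$ gives $aA-\lambda C\ge(a-\lambda\sigma_1^{-1})A>0$ under $\lambda<a\sigma_1$. Hence $g_u(0)>0$, so $g_u$ can vanish on $(0,\infty)$ only when $bA^2-\mu D<0$, and then at the single point $t_*(u)=\big((aA-\lambda C)/(\mu D-bA^2)\big)^{1/2}$; this is the claimed dichotomy. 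When $bA^2-\mu D<0$ the function $g_u$ is strictly decreasing, so $g_u'(t_*)<0$ and $T_u''(t_*)<0$, giving $\mathcal{N}=\mathcal{N}^-$; and since $g_u>0$ on $(0,t_*)$ and $g_u<0$ on $(t_*,\infty)$, $T_u$ increases then decreases on $[0,\infty)$, whence $T_u(1)=\max_{t\ge0}T_u(t)$ for $u\in\mathcal{N}$.

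Part $(2)$: now $2<q<4$, so $0<q-2<2$, and again $g_u(0)=aA>0$. If $bA^2-\mu D\le0$ (this is case $(i)$), then $g_u'(t)=2(bA^2-\mu D)t-\lambda(q-2)C\,t^{q-3}<0$ for all $t>0$, so $g_u$ is strictly decreasing from $aA$ to $-\infty$ and has exactly one zero $t_*(u)$, where $T_u''(t_*)=t_*\,g_u'(t_*)<0$, i.e. $t_*(u)u\in\mathcal{N}^-$; as before $T_u$ increases then decreases, so $T_u(1)=\max_{t\ge0}T_u(t)$ when $u\in\mathcal{N}^-$. If $bA^2-\mu D>0$, then $g_u(t)\to+\infty$ as $t\to\infty$ and, writing $g_u'(t)=t^{q-3}\big(2(bA^2-\mu D)t^{4-q}-\lambda(q-2)C\big)$, the bracket is strictly increasing from a negative value, so it changes sign exactly once, at $t_0(u)=\big(\lambda(q-2)C/(2(bA^2-\mu D))\big)^{1/(4-q)}$, which is therefore the unique global minimum point of $g_u$ on $(0,\infty)$. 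Substituting $t_0(u)$ into $g_u$ and using $(bA^2-\mu D)t_0(u)^2=\tfrac{1}{2}\lambda(q-2)C\,t_0(u)^{q-2}$ gives exactly $g_u(t_0(u))=aA-\tfrac{4-q}{2}\lambda C\,t_0(u)^{q-2}=\mathcal{D}(u)$. Now a trichotomy on $\mathcal{D}(u)=\min_{t>0}g_u$: if $\mathcal{D}(u)>0$ then $g_u>0$ on $(0,\infty)$ and $\chi_u\cap\mathcal{N}=\emptyset$; if $\mathcal{D}(u)=0$ (case $(iii)$) the only zero is $t_0(u)$, where $g_u'=0$, so $t_0(u)u\in\mathcal{N}$ but $T_u''(t_0(u))=0$ and $\chi_u\cap\mathcal{N}^-=\emptyset$; if $\mathcal{D}(u)<0$ (case $(ii)$) there are exactly two zeros $t_1<t_0(u)<t_2$ with $g_u'(t_1)<0<g_u'(t_2)$, so $t_1u\in\mathcal{N}^-$ and $t_2u\in\mathcal{N}\setminus\mathcal{N}^-$, and $t_*(u):=t_1$ is the unique point of $\chi_u\cap\mathcal{N}^-$. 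Combining these with case $(i)$ yields the stated equivalences. Finally, in case $(ii)$, $T_u'=t\,g_u$ is positive on $(0,t_1)$ and negative on $(t_1,t_0(u))$ (since $g_u<0$ strictly between its two zeros and $t_1<t_0(u)<t_2$), so $T_u$ attains its maximum over $[0,t_0(u)]$ at $t_1$; for $u\in\mathcal{N}^-$ this is $t_1=1$, i.e. $T_u(1)=\max_{0\le t\le t_0(u)}T_u(t)$.

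The elementary calculus on $g_u$ and the algebra producing the closed forms of $t_0(u)$ and $\mathcal{D}(u)$ are routine. The only place that requires care — and the main obstacle — is the regime $q>2$, $bA^2-\mu D>0$: one must correctly line up the three possibilities "no zero / one degenerate zero / two zeros" of $g_u$ with the geometry of $T_u$, single out the leftmost zero as the one lying on $\mathcal{N}^-$, and verify that the borderline $\mathcal{D}(u)=0$ really produces a point of $\mathcal{N}\setminus\mathcal{N}^-$. The identity $T_u''(t_*)=t_*\,g_u'(t_*)$ at zeros of $g_u$ is precisely what makes all of these translations automatic.
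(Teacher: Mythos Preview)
Your proof is correct and follows essentially the same approach as the paper: both factor $T_u'(t)=t\,g_u(t)$ and analyze the zeros of $g_u$ by locating its minimum at $t_0(u)$ when $bA^2-\mu D>0$, then invoke the trichotomy on $\mathcal{D}(u)=\min_{t>0}g_u(t)$. Your explicit use of the identity $T_u''(t_*)=t_*\,g_u'(t_*)$ at zeros of $g_u$ makes the link to $\mathcal{N}^-$ slightly cleaner than the paper's wording, but the underlying argument is the same.
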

\begin{proof}
Since $2^*=4$ for $N=4$, by a direct calculation, we can see that for every $u\in\h\backslash\{0\}$, we have
\begin{eqnarray*}
T_u'(t)=t(a\|\nabla u\|_{L^2(\Omega)}^2-\lambda\|u\|_{L^q(\Omega)}^qt^{q-2}+(b\|\nabla u\|_{L^2(\Omega)}^4-\mu\|u\|_{L^4(\Omega)}^4)t^2).
\end{eqnarray*}
$(1)$\quad Since $q=2$ and $0<\lambda<a\sigma_1$, by the definition of $\sigma_1$, we can see that $T_u'(t)>0$ for all $t>0$ when $b\|\nabla u\|_{L^2(\Omega)}^4-\mu\|u\|_{L^4(\Omega)}^4\geq0$ and there exists a unique $t_*(u)=\bigg[\frac{a\|\nabla u\|_{L^2(\Omega)}^2-\lambda\|u\|_{L^2(\Omega)}^2}{\mu\|u\|_{L^4(\Omega)}^4-b\|\nabla u\|_{L^2(\Omega)}^4}\bigg]^{\frac12}$ such that $T_u'(t_*(u))=0$ for $b\|\nabla u\|_{L^2(\Omega)}^4-\mu\|u\|_{L^4(\Omega)}^4<0$.  Moreover, $T_u'(t)>0$ for $0<t<t_*(u)$ and $T_u'(t)<0$ for $t>t_*(u)$.  It follows from the definitions of $\mathcal{N}$ and $\mathcal{N}^-$ that $\chi_u\cap\mathcal{N}=\emptyset$ for $b\|\nabla u\|_{L^2(\Omega)}^4-\mu\|u\|_{L^4(\Omega)}^4\geq0$ and there exists a unique $t_*(u)>0$ such that $t_*(u)u\in\chi_u\cap\mathcal{N}$ for $b\|\nabla u\|_{L^2(\Omega)}^4-\mu\|u\|_{L^4(\Omega)}^4<0$.  Furthermore, $\mathcal{N}=\mathcal{N}^-$ and $T_u(1)=\max_{t\geq0}T_u(t)$ for $u\in\mathcal{N}$.

$(2)$\quad If $b\|\nabla u\|_{L^2(\Omega)}^4-\mu\|u\|_{L^4(\Omega)}^4\leq0$, then by $2<q<4$, it is easy to see that there exists a unique $t_*(u)>0$ such that $T_u'(t_*(u))=0$.  Moreover, $T_u'(t)>0$ for $t\in(0, t_*(u))$ and $T_u'(t)<0$ for $t\in(t_*(u), +\infty)$.  Let us consider the case of $b\|\nabla u\|_{L^2(\Omega)}^4-\mu\|u\|_{L^4(\Omega)}^4>0$ in what follows.  Set
\begin{eqnarray*}
T_{u,1}(t)=(b\|\nabla u\|_{L^2(\Omega)}^4-\mu\|u\|_{L^4(\Omega)}^4)t^2-\lambda\|u\|_{L^q(\Omega)}^qt^{q-2}.
\end{eqnarray*}
Since $2<q<4$, by a direct calculation, we can see that
\begin{eqnarray*}
T_{u,1}'(t)=t^{q-3}(2(b\|\nabla u\|_{L^2(\Omega)}^4-\mu\|u\|_{L^4(\Omega)}^4)t^{4-q}-(q-2)\lambda\|u\|_{L^q(\Omega)}^q).
\end{eqnarray*}
Clearly, $T_{u,1}(t_0(u))=\min_{t>0}T_{u,1}(t)$ and $T_{u,1}(t)$ is strictly decreasing on $(0, t_0(u))$ and strictly increasing on $(t_0(u), +\infty)$.  Thus, by a direct calculation, we can see that $\min_{t\geq0}\frac{T_u'(t)}{t}=\frac{T_{u}'(t_0(u))}{t_0(u)}=\mathcal{D}(u)$.  It follows that $T_u'(t)>0$ for all $t>0$ in the subcase of $\mathcal{D}(u)>0$, $T_u'(t)\geq0$ for all $t>0$ and $T_u'(t)=0$ if and only if $t=t_0(u)$ in the subcase of $\mathcal{D}(u)=0$ and there exist unique $0<t_*^1(u)<t_0(u)<t_*^2(u)$ such that $T_u'(t_*^1(u))=T_u'(t_*^2(u))=0$ and $T_u'(t)>0$ for $t\in(0, t_*^1(u))$, $T_u'(t)<0$ for $t\in(t_*^1(u), t_*^2(u))$ and $T_u'(t)>0$ for $t\in(t_*^2(u), +\infty)$.  The conclusions follow immediately from the relations between $T_u(t)$ and $\mathcal{N}$ and $\mathcal{N}^-$.
\end{proof}

Let $\{u_n\}\subset\h$ be a minimizing sequence of $\mathcal{S}$ and satisfy $\|u_n\|_{L^4(\Omega)}=1$.  Then we can easy to see that
\begin{eqnarray*}
b\|\nabla u_n\|_{L^2(\Omega)}^4-\mu\|u_n\|_{L^4(\Omega)}^4&=&b\|\nabla u_n\|_{L^2(\Omega)}^4-\mu\\
&=&b\mathcal{S}^2-\mu+o_n(1).
\end{eqnarray*}
If $b\mathcal{S}^2<\mu$, then $b\|\nabla u_n\|_{L^2(\Omega)}^4-\mu\|u_n\|_{L^4(\Omega)}^4<0$ for $n$ large enough.  Thus, by Lemma~\ref{lem0001}, we have the following.
\begin{lemma}\label{lem0002}
Let $N=4$.  If $b\mathcal{S}^2<\mu$ then both $\mathcal{N}$ and $\mathcal{N}^-$ are nonempty sets under one of the following two cases
\begin{enumerate}
\item[$(a)$] $q=2$ and $0<\lambda<a\sigma_1$;
\item[$(b)$] $q>2$.
\end{enumerate}
\end{lemma}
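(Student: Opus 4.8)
The plan is to exhibit an explicit element of $\mathcal{N}^-$; since $\mathcal{N}^-\subset\mathcal{N}$ by definition, this simultaneously shows that both sets are nonempty. The natural candidate is a suitably rescaled term of a minimizing sequence for the Sobolev constant $\mathcal{S}$: such a sequence nearly saturates the inequality $\|\nabla u\|_{L^2(\Omega)}^2\geq\mathcal{S}\|u\|_{L^{2^*}(\Omega)}^2$, and with the normalization $\|u\|_{L^4(\Omega)}=1$ it makes the sign of the decisive quantity $b\|\nabla u\|_{L^2(\Omega)}^4-\mu\|u\|_{L^4(\Omega)}^4$ computable.

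First I would fix a minimizing sequence $\{u_n\}\subset\h$ for $\mathcal{S}$ with $\|u_n\|_{L^4(\Omega)}=1$, so that $\|\nabla u_n\|_{L^2(\Omega)}^2\to\mathcal{S}$; note each $u_n$ is a legitimate test function in $\h\backslash\{0\}$. Then, exactly as in the computation displayed just before the statement,
$$b\|\nabla u_n\|_{L^2(\Omega)}^4-\mu\|u_n\|_{L^4(\Omega)}^4=b\mathcal{S}^2-\mu+o_n(1),$$
and the hypothesis $b\mathcal{S}^2<\mu$ forces this expression to be strictly negative for all $n$ large enough. Fix one such $n$ and set $u:=u_n$.

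In case $(a)$ ($q=2$ and $0<\lambda<a\sigma_1$), part $(1)$ of Lemma~\ref{lem0001} applies to $u$: since $b\|\nabla u\|_{L^2(\Omega)}^4-\mu\|u\|_{L^4(\Omega)}^4<0$, there is a unique $t_*(u)>0$ with $t_*(u)u\in\chi_u\cap\mathcal{N}$, and moreover $\mathcal{N}=\mathcal{N}^-$ in this regime; hence $t_*(u)u\in\mathcal{N}=\mathcal{N}^-$. In case $(b)$ ($q>2$), the strict inequality in particular gives $b\|\nabla u\|_{L^2(\Omega)}^4-\mu\|u\|_{L^4(\Omega)}^4\leq0$, which is precisely subcase $(i)$ of part $(2)$ of Lemma~\ref{lem0001}; therefore there is a unique $t_*(u)>0$ with $t_*(u)u\in\chi_u\cap\mathcal{N}^-$. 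In either case $\mathcal{N}^-\neq\emptyset$, and consequently $\mathcal{N}\neq\emptyset$ as well.

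There is no genuine obstacle here: the entire content has been front-loaded into Lemma~\ref{lem0001} together with the one-line asymptotic computation for the minimizing sequence. The only point requiring a moment's care is bookkeeping of the two cases — in case $(a)$ one must invoke the identity $\mathcal{N}=\mathcal{N}^-$ so that the rescaled function, a priori only in $\mathcal{N}$, automatically lies in $\mathcal{N}^-$, whereas in case $(b)$ Lemma~\ref{lem0001} already places the rescaled function directly in $\mathcal{N}^-$.
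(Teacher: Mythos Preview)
Your argument is correct and matches the paper's approach exactly: the paper takes a minimizing sequence for $\mathcal{S}$ normalized by $\|u_n\|_{L^4(\Omega)}=1$, observes that $b\|\nabla u_n\|_{L^2(\Omega)}^4-\mu\|u_n\|_{L^4(\Omega)}^4=b\mathcal{S}^2-\mu+o_n(1)<0$ for large $n$, and then simply invokes Lemma~\ref{lem0001}. Your added remarks on the case bookkeeping (using $\mathcal{N}=\mathcal{N}^-$ in case~$(a)$ and subcase~$(i)$ in case~$(b)$) are correct and in fact spell out details the paper leaves implicit.
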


By Lemma~\ref{lem0002}, we can see that $m^-=\inf_{\mathcal{N}^-}\mathcal{E}(u)$ is well defined.  In what follows, let us give some estimates on $m^-$.
\begin{lemma}\label{lem0003}
Let $N=4$ and $b\mathcal{S}^2<\mu$.  Then we have $0<m^-<\frac{a^2\mathcal{S}^2}{4(\mu-b\mathcal{S}^2)}$ under one of the following two cases
\begin{enumerate}
\item[$(a)$] $q=2$ and $0<\lambda<a\sigma_1$;
\item[$(b)$] $q>2$.
\end{enumerate}
\end{lemma}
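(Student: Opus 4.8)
The plan is to establish the two inequalities separately: the lower bound $m^->0$ comes from a routine argument that $\mathcal{N}^-$ is bounded away from $0$ in $H_0^1(\Omega)$, while the strict upper bound $m^-<\frac{a^2\mathcal{S}^2}{4(\mu-b\mathcal{S}^2)}$ is obtained by exhibiting a suitable test direction $u$ and evaluating $\max_{t\geq0}T_u(t)$ along the fiber $\chi_u$. For the lower bound, take $u\in\mathcal{N}^-$. Using $\mathcal{E}'(u)u=0$ to eliminate one of the nonlinear terms and $T_u''(1)<0$ together with the Sobolev inequality $\|u\|_{L^4(\Omega)}^4\le\mathcal{S}^{-2}\|\nabla u\|_{L^2(\Omega)}^4$, I would show $\|\nabla u\|_{L^2(\Omega)}^2\ge\delta$ for some $\delta>0$ depending only on $a,b,\lambda,\mu,\mathcal{S}$ (in case $(a)$ one also uses $\lambda<a\sigma_1$ to absorb the $q=2$ term; in case $(b)$ the subcritical term is harmless). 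Then rewriting $\mathcal{E}(u)=\mathcal{E}(u)-\frac14\mathcal{E}'(u)u$ (or the analogous combination $\mathcal{E}(u)-\frac1q\mathcal{E}'(u)u$ when $q>2$) yields $\mathcal{E}(u)\ge c\|\nabla u\|_{L^2(\Omega)}^2\ge c\delta>0$, giving $m^->0$.

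For the upper bound, the key is the computation already implicit in Lemma~\ref{lem0001}: if $u$ satisfies $b\|\nabla u\|_{L^2(\Omega)}^4-\mu\|u\|_{L^4(\Omega)}^4<0$ (case $(a)$), or falls into subcase $(i)$ with $2<q<4$ (case $(b)$), then $\chi_u\cap\mathcal{N}^-\neq\emptyset$ and $T_u(t_*(u))=\max_{t\geq0}T_u(t)$, hence $m^-\le\max_{t\geq0}T_u(t)$. I would first drop the nonnegative terms $-\frac{\lambda}{q}\|u\|_{L^q(\Omega)}^q$ and $-\frac{\mu}{4}\|u\|_{L^4(\Omega)}^4$ only partially: writing $g(t):=\frac{a}{2}\|\nabla u\|_{L^2(\Omega)}^2t^2+\frac14(b\|\nabla u\|_{L^2(\Omega)}^4-\mu\|u\|_{L^4(\Omega)}^4)t^4\geq T_u(t)$ for all $t\geq0$, one has $m^-\le\max_{t\geq0}g(t)$, and an elementary one-variable maximization gives $\max_{t\geq0}g(t)=\dfrac{a^2\|\nabla u\|_{L^2(\Omega)}^4}{4(\mu\|u\|_{L^4(\Omega)}^4-b\|\nabla u\|_{L^2(\Omega)}^4)}$. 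The right-hand side is minimized (over admissible $u$) precisely along a minimizing sequence for $\mathcal{S}$ normalized by $\|u\|_{L^4(\Omega)}=1$: plugging $\|\nabla u_n\|_{L^2(\Omega)}^2=\mathcal{S}+o_n(1)$ gives $\dfrac{a^2(\mathcal{S}+o_n(1))^2}{4(\mu-b(\mathcal{S}+o_n(1))^2)}\to\dfrac{a^2\mathcal{S}^2}{4(\mu-b\mathcal{S}^2)}$.

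The subtlety — and the main obstacle — is that this only yields the \emph{non-strict} inequality $m^-\le\frac{a^2\mathcal{S}^2}{4(\mu-b\mathcal{S}^2)}$, because the bound $g(t)\geq T_u(t)$ is not tight (it discards the genuinely negative $L^q$ and part of the $L^4$ contributions). To get strictness I would instead keep the $L^4$ term exactly and argue that for the near-optimizers $u_n$ one has $\|u_n\|_{L^q(\Omega)}^q\ge c>0$ (by interpolation or by the fact that a minimizing sequence for $\mathcal{S}$ on a bounded domain, suitably translated/concentrated, still has controlled lower-order norms — or more simply, multiply a fixed bump function into the Talenti-type concentrating profile). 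Then $T_{u_n}(t)<g_n(t)$ strictly at the maximizing $t$, so $m^-\le\max_t T_{u_n}(t)<\max_t g_n(t)=\frac{a^2\mathcal{S}^2}{4(\mu-b\mathcal{S}^2)}+o_n(1)$, and taking $n$ large enough makes the strict gap survive the $o_n(1)$. In case $(a)$, since $\lambda<a\sigma_1$ the $q=2$ term $-\frac{\lambda}{2}\|u_n\|_{L^2(\Omega)}^2t^2$ can alternatively be merged into the quadratic coefficient, replacing $a$ by $a-\lambda\|u_n\|_{L^2(\Omega)}^2/\|\nabla u_n\|_{L^2(\Omega)}^2<a$, which likewise produces a strict drop below $\frac{a^2\mathcal{S}^2}{4(\mu-b\mathcal{S}^2)}$. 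Either way, combining with $m^->0$ from the first step completes the proof.
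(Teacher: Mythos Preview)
Your lower-bound argument for $m^->0$ is essentially the paper's: compute $\mathcal{E}(u)-\frac14\mathcal{E}'(u)u$ (resp.\ $\mathcal{E}(u)-\frac1q\mathcal{E}'(u)u$) and combine with a uniform lower bound on $\|\nabla u\|_{L^2(\Omega)}$ over $\mathcal{N}^-$. That part is fine.

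The upper bound, however, has a genuine gap. Your claim that for a minimizing sequence $\{u_n\}$ of $\mathcal{S}$ one can arrange $\|u_n\|_{L^q(\Omega)}^q\ge c>0$ is false: $\mathcal{S}$ is never attained on a bounded domain, so \emph{every} minimizing sequence (normalized in $L^4$) concentrates and converges weakly to $0$, forcing $\|u_n\|_{L^q(\Omega)}\to0$ for all $q<4$. The same objection kills the case-$(a)$ variant: the effective coefficient $a-\lambda\|u_n\|_{L^2}^2/\|\nabla u_n\|_{L^2}^2$ is strictly less than $a$ for each fixed $n$, but tends to $a$ as $n\to\infty$. Consequently your chain $m^-\le\max_t T_{u_n}(t)<\max_t g_n(t)=\frac{a^2\mathcal{S}^2}{4(\mu-b\mathcal{S}^2)}+o_n(1)$ only yields $m^-\le\frac{a^2\mathcal{S}^2}{4(\mu-b\mathcal{S}^2)}$; the ``strict gap'' does \emph{not} survive the $o_n(1)$, because the gap itself is $o_n(1)$.

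What is actually needed---and what the paper does, citing Naimen's estimates---is the Brezis--Nirenberg mechanism: take the explicit cut-off Aubin--Talenti profile $v_\varepsilon$ and expand each term in powers of $\varepsilon$. For $N=4$ one has $\|\nabla v_\varepsilon\|_{L^2}^2=\mathcal{S}^2+O(\varepsilon^{2})$, $\|v_\varepsilon\|_{L^4}^4=\mathcal{S}^2+O(\varepsilon^{4})$, while $\|v_\varepsilon\|_{L^q}^q\gtrsim\varepsilon^{4-q}$ for $2<q<4$ and $\|v_\varepsilon\|_{L^2}^2\gtrsim\varepsilon^{2}|\log\varepsilon|$ for $q=2$. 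The point is that the subcritical contribution, though vanishing, vanishes \emph{more slowly} than the $O(\varepsilon^{2})$ error in the leading terms (since $4-q<2$, resp.\ $|\log\varepsilon|\to\infty$). This rate comparison is what produces $\sup_{t\ge0}\mathcal{E}(tv_\varepsilon)<\frac{a^2\mathcal{S}^2}{4(\mu-b\mathcal{S}^2)}$ for $\varepsilon$ small, and it cannot be replaced by the soft argument you propose.
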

\begin{proof}
Let us first estimate the low bound of $m^-$ in the case $q=2$.  Suppose $u\in\mathcal{N}^-$, then by the definitions of $\mathcal{N}^-$ and $\sigma_1$,
\begin{eqnarray}
\mathcal{E}(u)&=&\mathcal{E}(u)-\frac{1}{4}\mathcal{E}'(u)u\notag\\
&=&\frac14(a\|\nabla u\|_{L^2(\Omega)}^2-\lambda\|u\|_{L^2(\Omega)}^2)\notag\\
&\geq&\frac{a}{4}(1-\frac{\lambda}{a\sigma_1})\|\nabla u\|_{L^2(\Omega)}^2.\label{eq5001}
\end{eqnarray}
On the other hand, since $u\in\mathcal{N}^-\subset\mathcal{N}$, we can see from the definitions of $\sigma_1$ and $\mathcal{S}$ that
\begin{eqnarray}
a(1-\frac{\lambda}{a\sigma_1})\|\nabla u\|_{L^2(\Omega)}^2&\leq&(a\|\nabla u\|_{L^2(\Omega)}^2-\lambda\|u\|_{L^2(\Omega)}^2)\notag\\
&=&\mu\|u\|_{L^4(\Omega)}^4-b\|\nabla u\|_{L^2(\Omega)}^4\notag\\
&\leq&(\mu\mathcal{S}^{-2}-b)\|\nabla u\|_{L^2(\Omega)}^4,\label{eq5002}
\end{eqnarray}
which, together with $0<\lambda<a\sigma_1$, implies $\|\nabla u\|_{L^2(\Omega)}\geq d_0>0$ for some positive constant $d_0$.  It follows from \eqref{eq5001} and $0<\lambda<a\sigma_1$ once more that $m^-\geq\frac{a}{4}(1-\frac{\lambda}{a\sigma_1})d_0^2$.  We next estimate the low bound of $m^-$ in the case $q>2$.  Suppose $u\in\mathcal{N}^-$, then by the definition of $\mathcal{N}^-$ we can see that
\begin{eqnarray*}
a\|\nabla u\|_{L^2(\Omega)}^2-\lambda\|u\|_{L^q(\Omega)}^q+b\|\nabla u\|_{L^2(\Omega)}^4-\mu\|u\|_{L^4(\Omega)}^4=0
\end{eqnarray*}
and
\begin{eqnarray*}
a\|\nabla u\|_{L^2(\Omega)}^2-(q-1)\lambda\|u\|_{L^q(\Omega)}^q+3(b\|\nabla u\|_{L^2(\Omega)}^4-\mu\|u\|_{L^4(\Omega)}^4)<0.
\end{eqnarray*}
It follows that
\begin{eqnarray*}
(4-q)(b\|\nabla u\|_{L^2(\Omega)}^4-\mu\|u\|_{L^4(\Omega)}^4)<(q-2)a\|\nabla u\|_{L^2(\Omega)}^2,
\end{eqnarray*}
which together with $u\in\mathcal{N}^-\subset\mathcal{N}$, implies
\begin{eqnarray}
\mathcal{E}(u)&=&\mathcal{E}(u)-\frac{1}{q}\mathcal{E}'(u)u\notag\\
&=&\frac{q-2}{2q}a\|\nabla u\|_{L^2(\Omega)}^2-\frac{4-q}{4q}(b\|\nabla u\|_{L^2(\Omega)}^4-\mu\|u\|_{L^4(\Omega)}^4)\notag\\
&>&\frac{q-2}{4q}a\|\nabla u\|_{L^2(\Omega)}^2.\label{eq0001}
\end{eqnarray}
On the other hand, since $u\in\mathcal{N}^-\subset\mathcal{N}$ and $b>0$, we can see from the definitions of $\mathcal{S}_q$ and $\mathcal{S}$ that
\begin{eqnarray}
a\|\nabla u\|_{L^2(\Omega)}^2&\leq&\lambda\|u\|_{L^q(\Omega)}^q+\mu\|u\|_{L^4(\Omega)}^4\notag\\
&\leq&\lambda\mathcal{S}_q^{-\frac{q}{2}}\|\nabla u\|_{L^2(\Omega)}^q+\mu\mathcal{S}^{-2}\|\nabla u\|_{L^2(\Omega)}^4.\label{eq0002}
\end{eqnarray}
Note that $2<q<4$, we must have from \eqref{eq0002} that $\|\nabla u\|_{L^2(\Omega)}\geq d_0>0$, where $d_0$ is a constant.  Now, by \eqref{eq0001}, we can see that $m^-\geq\frac{a(q-2)d_0^2}{4q}>0$.  We final estimate the up bound of $m^-$ for all $2\leq q<4$.  Let $v_\ve$ be the function given by \cite[(9)]{N14}.  Then by \cite[Lemma~2.2]{N14} and a similar argument as used in \cite[Lemma~3.4]{N14}, for all $2\leq q<4$, we can see that
\begin{eqnarray}\label{eq0003}
\sup_{t\geq0}\mathcal{E}(tv_\ve)<\frac{a^2\mathcal{S}^2}{4(\mu-b\mathcal{S}^2)}\quad\text{if }\ve>0\text{ small enough}.
\end{eqnarray}
On the other hand, by \cite[(9)]{N14} and $\mu>b\mathcal{S}^2$, we can see that $b\|\nabla v_\ve\|_{L^2(\Omega)}^4-\mu\|v_\ve\|_{L^4(\Omega)}^4<0$ for $\ve>0$ small enough.  By Lemma~\ref{lem0001}, there exists a unique $t_*(v_\ve)>0$ such that $t_*(v_\ve)v_\ve\in\mathcal{N}^-$ for all $2\leq q<4$ if $\ve>0$ small enough.  It follows from \eqref{eq0003} that
\begin{eqnarray*}
m^-\leq\mathcal{E}(t_*(v_\ve)v_\ve)\leq\sup_{t\geq0}\mathcal{E}(tv_\ve)<\frac{a^2\mathcal{S}^2}{4(\mu-b\mathcal{S}^2)}\quad\text{for }\ve>0\text{ small enough},
\end{eqnarray*}
which completes the proof.
\end{proof}

\subsection{The subcase $q=2$}
Since Lemma~\ref{lem0001} and \eqref{eq5002} hold, by applying the Ekeland's principle in a standard way, we can see that there exists $\{u_n\}\subset\mathcal{N}^-$ such that
\begin{enumerate}
\item[$(a)$] $\mathcal{E}(u_n)=m^-+o_n(1)$;
\item[$(b)$] $\mathcal{E}'(u_n)=o_n(1)$ in $H^{-1}(\Omega)$.
\end{enumerate}
\begin{proposition}\label{prop0001}
Let $N=4$ and $q=2$.  Then $(\mathcal{P}_{a,b,\lambda,\mu})$ has a ground state solution for $0<\lambda<a\sigma_1$ and $b\mathcal{S}^2<\mu$.
\end{proposition}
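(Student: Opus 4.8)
The plan is to extract from the Palais--Smale sequence $\{u_n\}\subset\mathcal{N}^-$ constructed above (so that $\mathcal{E}(u_n)=m^-+o_n(1)$ and $\mathcal{E}'(u_n)=o_n(1)$ in $H^{-1}(\Omega)$) a minimizer of $\mathcal{E}$ on $\mathcal{N}^-$, and then to upgrade it to a positive ground state. First I would check boundedness of $\{u_n\}$ in $\h$: since $q=2$ and $2^*=4$, the identity $\mathcal{E}(u_n)-\frac14\mathcal{E}'(u_n)u_n=\frac14(a\|\nabla u_n\|_{L^2(\Omega)}^2-\lambda\|u_n\|_{L^2(\Omega)}^2)\geq\frac{a}{4}(1-\frac{\lambda}{a\sigma_1})\|\nabla u_n\|_{L^2(\Omega)}^2$ and $\lambda<a\sigma_1$ give it. Passing to a subsequence, $u_n\rightharpoonup u_0$ in $\h$, $u_n\to u_0$ in $L^p(\Omega)$ ($p<4$) and a.e., and $\|\nabla u_n\|_{L^2(\Omega)}^2\to A$ for some $A\geq\|\nabla u_0\|_{L^2(\Omega)}^2$. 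Testing $\mathcal{E}'(u_n)=o_n(1)$ against $\varphi\in\h$ and using that the Kirchhoff coefficient tends to $a+bA$ while the subcritical and critical lower-order terms are weakly continuous, one finds that $u_0$ solves the ``frozen'' equation $-(a+bA)\Delta u_0=\lambda u_0+\mu|u_0|^{2}u_0$ with zero boundary data. Hence $u_0$ is a solution of $(\mathcal{P}_{a,b,\lambda,\mu})$ if and only if the Kirchhoff defect $B:=A-\|\nabla u_0\|_{L^2(\Omega)}^2$ vanishes.

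The heart of the proof is to show $B=0$, and this is where Lemma~\ref{lem0003} enters. Set $w_n=u_n-u_0$; the Brezis--Lieb lemma gives $\|u_n\|_{L^4(\Omega)}^4=\|u_0\|_{L^4(\Omega)}^4+\|w_n\|_{L^4(\Omega)}^4+o_n(1)$, while weak convergence in $\h$ gives $\|\nabla w_n\|_{L^2(\Omega)}^2\to B$. Subtracting $\mathcal{E}'(u_n)u_n=o_n(1)$ from the frozen equation tested against $u_0$ yields $\mu\lim_n\|w_n\|_{L^4(\Omega)}^4=(a+bA)B$; feeding the two splittings and this identity into $\mathcal{E}(u_n)=m^-+o_n(1)$ gives, after a short computation, the clean formula
\begin{equation*}
m^-=\frac14\big(a\|\nabla u_0\|_{L^2(\Omega)}^2-\lambda\|u_0\|_{L^2(\Omega)}^2\big)+\frac{a}{4}B\geq\frac{a}{4}B,
\end{equation*}
the last inequality using $\lambda<a\sigma_1$. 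If $B>0$, then the Sobolev inequality for $w_n$, $\mu\lim_n\|w_n\|_{L^4(\Omega)}^4\leq\mu\mathcal{S}^{-2}B^2$, combined with $\mu\lim_n\|w_n\|_{L^4(\Omega)}^4=(a+bA)B\geq(a+bB)B$, forces $a\leq(\mu\mathcal{S}^{-2}-b)B$, i.e. $B\geq a\mathcal{S}^2/(\mu-b\mathcal{S}^2)$ (here $\mu>b\mathcal{S}^2$ is used). Then $m^-\geq\frac{a}{4}B\geq\frac{a^2\mathcal{S}^2}{4(\mu-b\mathcal{S}^2)}$, contradicting the strict bound $m^-<\frac{a^2\mathcal{S}^2}{4(\mu-b\mathcal{S}^2)}$ of Lemma~\ref{lem0003}. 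Therefore $B=0$.

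With $B=0$ we get $\|\nabla u_n\|_{L^2(\Omega)}\to\|\nabla u_0\|_{L^2(\Omega)}$, hence $u_n\to u_0$ strongly in $\h$; continuity of $\mathcal{E}$ and $\mathcal{E}'$ then gives $\mathcal{E}(u_0)=m^-$ and $\mathcal{E}'(u_0)=0$. Since $\|\nabla u\|_{L^2(\Omega)}\geq d_0>0$ for every $u\in\mathcal{N}^-$ (cf. the proof of Lemma~\ref{lem0003}), we have $u_0\neq0$, so $u_0\in\mathcal{N}=\mathcal{N}^-$ and $\mathcal{E}(u_0)=\inf_{\mathcal{N}^-}\mathcal{E}$. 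A standard use of the ray-projection of Lemma~\ref{lem0001} (replacing a sign-changing minimizer $v$ by $t_*(|v|)|v|\in\mathcal{N}^-$, which does not raise the energy because $\|\nabla|v|\|_{L^2(\Omega)}\leq\|\nabla v\|_{L^2(\Omega)}$) shows the infimum is in fact attained at a nonnegative function; Brezis--Kato/Moser iteration and elliptic regularity then give $u_0\in L^\infty(\Omega)\cap C^{1,\alpha}(\overline{\Omega})$, and the strong maximum principle gives $u_0>0$ in $\Omega$, so $u_0$ solves $(\mathcal{P}_{a,b,\lambda,\mu})$. Finally $u_0$ is a ground state: every nonzero critical point lies on $\mathcal{N}$, so $\inf_{\mathcal{N}}\mathcal{E}\leq\inf_{\mathcal{K}}\mathcal{E}\leq\mathcal{E}(u_0)=\inf_{\mathcal{N}}\mathcal{E}$, whence $\mathcal{E}(u_0)=\inf_{\mathcal{K}}\mathcal{E}$.

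The main obstacle is precisely the vanishing of the Kirchhoff defect $B$: because $u\mapsto b\|\nabla u\|_{L^2(\Omega)}^2\Delta u$ is not weakly continuous, the weak limit only solves the frozen equation a priori, and a loss of mass at the critical exponent has to be excluded. The mechanism that makes this work is the quantitative energy gap of Lemma~\ref{lem0003} (obtained there from Naimen's test functions $v_\ve$): any nonzero concentration would carry at least the energy $\frac{a^2\mathcal{S}^2}{4(\mu-b\mathcal{S}^2)}$, which $m^-$ stays strictly below. The only slightly delicate bookkeeping is the algebra leading to the displayed identity for $m^-$; everything else is routine.
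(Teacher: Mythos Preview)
Your argument is correct and runs on the same engine as the paper's: boundedness from $\mathcal{E}(u_n)-\tfrac14\mathcal{E}'(u_n)u_n$ together with $\lambda<a\sigma_1$, and compactness forced by the strict energy bound $m^-<\dfrac{a^2\mathcal{S}^2}{4(\mu-b\mathcal{S}^2)}$ of Lemma~\ref{lem0003}. The difference is one of packaging. The paper simply invokes Naimen's local compactness result (\cite[Lemma~2.1]{N14}) to pass from the energy threshold to strong convergence of $\{u_n\}$, and uses \cite[Remark~4.2]{N14} to take $u_n\geq0$ from the start; you instead carry out a self-contained Brezis--Lieb splitting and quantize the possible loss of mass $B$ directly. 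Your route is a little more elementary (no global profile decomposition needed), while the paper's citation gives a shorter text and aligns the argument with the machinery used elsewhere in the paper (e.g.\ in Proposition~\ref{prop0002}). Two small cosmetic points: for $v\in\h$ one has $\|\nabla|v|\|_{L^2(\Omega)}=\|\nabla v\|_{L^2(\Omega)}$, so $|v|$ is already on $\mathcal{N}^-$ with the same energy and no ray-projection is needed; and the paper's proof of Proposition~\ref{prop0001} additionally checks that the ground state level coincides with the mountain-pass level, which you omit (though it is not part of the proposition's statement).
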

\begin{proof}
Thanks to \cite[Remark~4.2]{N14}, we may assume $u_n\geq0$.  By \eqref{eq5001} and $(a)$, we can see that $\{u_n\}$ is bounded in $\h$.  Thanks to Lemma~\ref{lem0003} and \cite[Lemma~2.1]{N14}, we have that $u_n=u_0+o_n(1)$ strongly in $\h$.  It follows from the strong maximum principle and the definition of $\mathcal{N}$ that $u_0$ is a ground state solution of $(\mathcal{P}_{a,b,\lambda,\mu})$.  It remains to show that $u_0$ is also a mountain pass solution.  Indeed, define the mountain pass level as follows:
$$
c:=\inf_{\gamma\in\Gamma}\sup_{u\in\gamma([0,1])}\mathcal{E}(u),\qquad \Gamma:=\{\gamma\in C([0,1],H^1_0(\Omega)):\gamma(0)=0, \mathcal{E}(\gamma(1))<0\}.
$$
Then by Neimen's result (cf.\cite{N14}), there exists $v_0\in \mathcal{N}$ such that $\mathcal{E}(v_0)=c$. It follows that $m^-\leq c$.  On the other hand, since $u_0\in\mathcal{N}$ and $\mathcal{E}(u_0)=m^->0$, by Lemma~\ref{lem0001}, we can see that $\mathcal{E}(tu_0)\to -\infty$ as $t\to \infty$ and $\mathcal{E}(u_0)=\max_{t>0}\mathcal{E}(tu_0)$.  Thus, there exists $s_0>2$ such that $\mathcal{E}(s_0u_0)<0$.  Set $\gamma_0(t)=ts_0u_0$, we have $\gamma_0\in \Gamma$.  Therefore,
$$
c\leq\max_{t\in[0,1]}\mathcal{E}(\gamma_0(t))=\mathcal{E}(u_0)=m^-,
$$
which implies that $u_0$ is also a mountain pass solution.
\end{proof}
\subsection{The subcase $q>2$}
Since $\mathcal{N}^-\not=\emptyset$, by the Ekeland's principle, there exists $\{u_n\}\subset\mathcal{N}^-$ such that
\begin{enumerate}
\item[$(a)$] $\mathcal{E}(u_n)=m^-+o_n(1)$;
\item[$(b)$] $\mathcal{E}(v)-\mathcal{E}(u_n)\geq-\frac1n\|\nabla (v-u_n)\|_{L^2(\Omega)}$ for all $v\in\mathcal{N}^-$.
\end{enumerate}
In what follows, we will show that under some further conditions on the parameters $a,b,\lambda,\mu$, we actually have that $\{u_n\}\subset\mathcal{N}^-$ is a bounded $(PS)$ sequence of $\mathcal{E}(u)$ at the energy value $m^-$ for $2<q<4$.
\begin{lemma}\label{lem0004}
Let $N=4$ and $2<q<4$.  If
\begin{eqnarray*}
b<\frac{(q-2)^2(\mu(a\sigma_1)^{\frac{4-q}{q-2}}+\lambda^{\frac{2}{q-2}})}{4\lambda^{\frac{2}{q-2}}+(q-2)^2\mu(a\sigma_1)^{\frac{4-q}{q-2}}}\mu\mathcal{S}^{-2},
\end{eqnarray*}
then $\{u_n\}\subset\mathcal{N}^-$ is a bounded $(PS)$ sequence of $\mathcal{E}(u)$ at the energy value $m^-$, where $\sigma_1$ is the first eigenvalue of $-\Delta$ in $L^2(\Omega)$.
\end{lemma}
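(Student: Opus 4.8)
The plan is to verify separately the two defining properties of a $(PS)$ sequence for $\{u_n\}\subset\mathcal{N}^-$: that it is bounded in $\h$, and that $\mathcal{E}'(u_n)\to0$ in $H^{-1}(\Omega)$; the hypothesis \eqref{eq0013} will be needed only at the very last step. Boundedness is immediate: since $u_n\in\mathcal{N}^-$, estimate \eqref{eq0001} gives $m^-+o_n(1)=\mathcal{E}(u_n)>\frac{q-2}{4q}a\|\nabla u_n\|_{L^2(\Omega)}^2$, so $\{\|\nabla u_n\|_{L^2(\Omega)}\}$ is bounded, and hence so are $\{\|u_n\|_{L^q(\Omega)}\}$ and $\{\|u_n\|_{L^4(\Omega)}\}$ by the Sobolev embeddings.

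For $\mathcal{E}'(u_n)\to0$ I would run the usual constrained-gradient argument on the Nehari manifold. Fix $w\in\h$ with $\|\nabla w\|_{L^2(\Omega)}=1$. Since $u_n\in\mathcal{N}^-$ one has $T_{u_n}''(1)<0$, and a short computation (cf. the proof of Lemma~\ref{lem0001}) shows that the $s$-derivative at $(s,t)=(1,0)$ of $(s,t)\mapsto\mathcal{E}'\big(s(u_n+tw)\big)\big[s(u_n+tw)\big]$ equals $T_{u_n}''(1)\neq0$; so the implicit function theorem yields a $C^1$ map $t\mapsto s_n(t)$ with $s_n(0)=1$ and $s_n(t)(u_n+tw)\in\mathcal{N}$ for $|t|$ small, and since $\{v\in\mathcal{N}\mid T_v''(1)<0\}=\mathcal{N}^-$ is open in $\mathcal{N}$, in fact $s_n(t)(u_n+tw)\in\mathcal{N}^-$ for $|t|$ small. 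Differentiating the defining relation gives $s_n'(0)=-\partial_tF(1,0)/T_{u_n}''(1)$, where $|\partial_tF(1,0)|$ is bounded by a constant depending only on the uniform bounds for $\|\nabla u_n\|_{L^2(\Omega)}$, $\|u_n\|_{L^q(\Omega)}$ and $\|u_n\|_{L^4(\Omega)}$, so $|s_n'(0)|\le C|T_{u_n}''(1)|^{-1}$. Inserting $v=s_n(t)(u_n+tw)$ into the Ekeland inequality $(b)$, dividing by $t>0$, letting $t\to0^+$, repeating with $-w$ in place of $w$, and using $\mathcal{E}'(u_n)u_n=0$, one obtains $|\langle\mathcal{E}'(u_n),w\rangle|\le\frac1n\big(|s_n'(0)|\|\nabla u_n\|_{L^2(\Omega)}+1\big)$ for every such $w$, hence $\|\mathcal{E}'(u_n)\|_{H^{-1}(\Omega)}\le\frac Cn\big(1+|T_{u_n}''(1)|^{-1}\big)$. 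Everything thus reduces to showing $\inf_n|T_{u_n}''(1)|>0$.

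This non-degeneracy claim is the heart of the matter, and it is where \eqref{eq0013} is used. Suppose for contradiction that $T_{u_n}''(1)\to0$ along a subsequence. From $T_u''(1)=-2a\|\nabla u\|_{L^2(\Omega)}^2+(4-q)\lambda\|u\|_{L^q(\Omega)}^q$ for $u\in\mathcal{N}$ this forces $\lambda\|u_n\|_{L^q(\Omega)}^q=\frac{2a}{4-q}\|\nabla u_n\|_{L^2(\Omega)}^2+o_n(1)$, hence $b\|\nabla u_n\|_{L^2(\Omega)}^4-\mu\|u_n\|_{L^4(\Omega)}^4=\frac{(q-2)a}{4-q}\|\nabla u_n\|_{L^2(\Omega)}^2+o_n(1)$, and substituting both into $\mathcal{E}$ yields $\mathcal{E}(u_n)=\frac{(q-2)a}{4q}\|\nabla u_n\|_{L^2(\Omega)}^2+o_n(1)$, so $m^-=\frac{(q-2)a}{4q}\lim_n\|\nabla u_n\|_{L^2(\Omega)}^2$. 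On the other hand, interpolating $\|u_n\|_{L^q(\Omega)}^q\le\|u_n\|_{L^2(\Omega)}^{4-q}\|u_n\|_{L^4(\Omega)}^{2(q-2)}$, using $\|u_n\|_{L^2(\Omega)}^2\le\sigma_1^{-1}\|\nabla u_n\|_{L^2(\Omega)}^2$ and then the two displayed identities, gives a lower bound $\|\nabla u_n\|_{L^2(\Omega)}^2\ge c_0+o_n(1)$ with $c_0$ an explicit positive function of $a,b,\lambda,\mu,\sigma_1,q$. Then $m^-\ge\frac{(q-2)a}{4q}c_0$, while by Lemma~\ref{lem0003} $m^-<\frac{a^2\mathcal{S}^2}{4(\mu-b\mathcal{S}^2)}$; so $\frac{(q-2)a}{4q}c_0<\frac{a^2\mathcal{S}^2}{4(\mu-b\mathcal{S}^2)}$, and a direct manipulation shows that this inequality fails exactly when \eqref{eq0013} holds — a contradiction. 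Therefore $\inf_n|T_{u_n}''(1)|=:\delta>0$, and combining with the previous paragraph gives $\mathcal{E}'(u_n)\to0$ in $H^{-1}(\Omega)$.

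The genuine difficulties I expect are concentrated in the last paragraph: arranging the Sobolev and first-eigenvalue estimates so that the resulting threshold is precisely the constant in \eqref{eq0013}, and, in the preceding paragraph, checking carefully that the implicit-function projection really lands in $\mathcal{N}^-$ and that $|s_n'(0)|$ is controlled uniformly in $n$ solely in terms of $|T_{u_n}''(1)|^{-1}$.
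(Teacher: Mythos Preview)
Your proposal is correct and follows essentially the same route as the paper's proof. The paper also establishes boundedness via \eqref{eq0001}, uses the implicit function theorem to construct curves $l_n(s)u_n+sw\in\mathcal{N}^-$ with $|l_n'(0)|\le C|T_{u_n}''(1)|^{-1}$, and reduces everything to the non-degeneracy claim $T_{u_n}''(1)\le -d_1<0$; that claim is proved by contradiction exactly as you outline, combining the H\"older interpolation $\|u\|_{L^q}^q\le\|u\|_{L^2}^{4-q}\|u\|_{L^4}^{2(q-2)}$ with Young's inequality (weighted so that the $L^2$-term becomes $a\sigma_1\|u\|_{L^2}^2\le a\|\nabla u\|_{L^2}^2$) and the Nehari identity to bound $b\|\nabla u_n\|_{L^2}^4-\mu\|u_n\|_{L^4}^4$ from above by a constant times $b\|\nabla u_n\|_{L^2}^4$, which together with \eqref{eq0004} yields the lower bound on $\|\nabla u_n\|_{L^2}^2$ that contradicts Lemma~\ref{lem0003} precisely when \eqref{eq0013} holds. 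The only cosmetic differences are that the paper parametrizes the curve as $l u_n+s w$ rather than $s(u_n+tw)$, and that it packages the interpolation/Young step as a separate Claim~1 before the contradiction argument; the substance is identical.
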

\begin{proof}
Let $w\in\mathbb{B}_1:=\{u\in\h\mid\|\nabla u\|_{L^2(\Omega)}=1\}$ and consider the function
\begin{eqnarray*}
\mathcal{F}_{n,w}(l,s)&=&a\|\nabla(lu_n+sw)\|_{L^2(\Omega)}^2-\lambda\|(lu_n+sw)\|_{L^q(\Omega)}^q\\
&&+b\|\nabla(lu_n+sw)\|_{L^2(\Omega)}^4-\mu\|(lu_n+sw)\|_{L^4(\Omega)}^4.
\end{eqnarray*}
Since $\{u_n\}\subset\mathcal{N}^-$, by a direct calculation, we can see that
\begin{eqnarray*}
\mathcal{F}_{n,w}(1,0)=0\quad\text{and}\quad\frac{\partial\mathcal{F}_{n,w}}{\partial l}(1,0)=T_{u_n}''(1)<0.
\end{eqnarray*}
It follows from the implicit function theorem that for $\forall n\in\bbn$, there exist $\delta_n>0$ and
$$
l_n(s)\in C^1([-\delta_n, \delta_n], [\frac12, \frac32])
$$
such that $\{l_n(s)u_n+sw\}\subset\mathcal{N}$.  Since $w\in\mathbb{B}_1$, by choosing $\delta_n>0$ small enough if necessary, we can also have that $\{l_n(s)u_n+sw\}\subset\mathcal{N}^-$.  Moreover, $l_n(0)=1$ and
\begin{eqnarray*}
l_n'(0)&=&-\frac{\frac{\partial\mathcal{F}_{n,w}}{\partial s}(1,0)}{\frac{\partial\mathcal{F}_{n,w}}{\partial l}(1,0)}\\
&=&-\frac{(4b\|\nabla u_n\|_{L^2(\Omega)}^2+2a)\int_{\Omega}\nabla u_n\nabla wdx-q\lambda\int_{\Omega}|u_n|^{q-2}u_nwdx-4\mu\int_{\Omega}u_n^3wdx}{T_{u_n}''(1)}.
\end{eqnarray*}
For the sake of clarity, the following proof will be divided into three claims.

{\bf Claim~1.}\quad We have $b\|\nabla u_n\|_{L^2(\Omega)}^4-\mu\|u_n\|_{L^4(\Omega)}^{4}\leq\frac{\lambda^{\frac{2}{q-2}}b}{(\mu(a\sigma_1)^{\frac{q-2}{4-q}}+\lambda^{\frac{2}{q-2}})}\|\nabla u_n\|_{L^2(\Omega)}^4$.

Indeed, by the Young and H\"older inequalities, we can see that
\begin{eqnarray*}
\lambda\|u_n\|_{L^q(\Omega)}^q&\leq&\lambda\|u_n\|_{L^2(\Omega)}^{4-q}\|u_n\|_{L^4(\Omega)}^{2q-4}\\
&\leq&a\sigma_1\|u_n\|_{L^2(\Omega)}^2+\frac{\lambda^{\frac{2}{q-2}}}{(a\sigma_1)^{\frac{4-q}{q-2}}}\|u_n\|_{L^4(\Omega)}^{4}\\
&\leq&a\|\nabla u_n\|_{L^2(\Omega)}^2+\frac{\lambda^{\frac{2}{q-2}}}{(a\sigma_1)^{\frac{4-q}{q-2}}}\|u_n\|_{L^4(\Omega)}^{4}.
\end{eqnarray*}
It follows from $\{u_n\}\subset\mathcal{N}$ that
\begin{eqnarray*}
b\|\nabla u_n\|_{L^2(\Omega)}^4\leq(\mu+\frac{\lambda^{\frac{2}{q-2}}}{(a\sigma_1)^{\frac{4-q}{q-2}}})\|u_n\|_{L^4(\Omega)}^{4},
\end{eqnarray*}
which implies
\begin{eqnarray*}
b\|\nabla u_n\|_{L^2(\Omega)}^4-\mu\|u_n\|_{L^4(\Omega)}^{4}\leq\frac{\lambda^{\frac{2}{q-2}}b}{(\mu(a\sigma_1)^{\frac{q-2}{4-q}}+\lambda^{\frac{2}{q-2}})}\|\nabla u_n\|_{L^2(\Omega)}^4.
\end{eqnarray*}

{\bf Claim~2.}\quad If
\begin{eqnarray}\label{eq0011}
b<\frac{(q-2)^2(\mu(a\sigma_1)^{\frac{4-q}{q-2}}+\lambda^{\frac{2}{q-2}})}{4\lambda^{\frac{2}{q-2}}+(q-2)^2\mu(a\sigma_1)^{\frac{4-q}{q-2}}}\mu\mathcal{S}^{-2},
\end{eqnarray}
then we have $T_{u_n}''(1)\leq-d_1<0$, where $d_1$ is a constant independent of $n$.

Indeed, if not, then there exists a subsequence of $\{u_n\}$, which is still denoted by $\{u_n\}$, such that $T_{u_n}''(1)=o_n(1)$.  It follows from $\{u_n\}\subset\mathcal{N}$ that
\begin{eqnarray}\label{eq0004}
(4-q)(b\|\nabla u_n\|_{L^2(\Omega)}^4-\mu\|u_n\|_{L^4(\Omega)}^4)=(q-2)a\|\nabla u_n\|_{L^2(\Omega)}^2+o_n(1).
\end{eqnarray}
Since $2<q<4$ and $\{u_n\}\subset\mathcal{N}^-$, by a similar argument as used in \eqref{eq0002}, we have that $\|\nabla u_n\|_{L^2(\Omega)}\geq d_0>0$, where $d_0$ is a constant.  This together with Claim~1, implies
\begin{eqnarray}\label{eq0008}
\|\nabla u_n\|_{L^2(\Omega)}^2\geq\frac{(q-2)a(\mu(a\sigma_1)^{\frac{q-2}{4-q}}+\lambda^{\frac{2}{q-2}})}{(4-q)b\lambda^{\frac{2}{q-2}}}+o_n(1).
\end{eqnarray}
On the other hand, by Lemma~\ref{lem0003} and a similar argument as used for \eqref{eq0001}, we can see from $(a)$ and \eqref{eq0008} that
\begin{eqnarray}\label{eq0009}
\frac{a^2\mathcal{S}^2}{4(\mu-b\mathcal{S}^2)}>\frac{q-2}{4q}a\|\nabla u_n\|_{L^2(\Omega)}^2+o_n(1)\geq\frac{[a(q-2)]^2(\mu(a\sigma_1)^{\frac{q-2}{4-q}}+\lambda^{\frac{2}{q-2}})}{4q(4-q)b\lambda^{\frac{2}{q-2}}}+o_n(1),
\end{eqnarray}
which together with a direct calculation, implies $b\geq\frac{(q-2)^2(\mu(a\sigma_1)^{\frac{4-q}{q-2}}+\lambda^{\frac{2}{q-2}})}{4\lambda^{\frac{2}{q-2}}+(q-2)^2\mu(a\sigma_1)^{\frac{4-q}{q-2}}}\mu\mathcal{S}^{-2}$.  It contradicts to \eqref{eq0011}.

{\bf Claim~3.}\quad We have $\{u_n\}$ is bounded in $\h$ and $\mathcal{E}'(u_n)=o_n(1)$ strongly in $H^{-1}(\Omega)$.

Indeed, for every $w\in\mathbb{B}_1$, we take $v=l_n(s)u_n+sw$ in $(b)$.  Then by the Taylor expansion and the fact that $\{u_n\}\subset\mathcal{N}^-$, we have that
\begin{eqnarray}
s\mathcal{E}'(u_n)w&=&\mathcal{E}'(u_n)((l_n(s)-1)u_n+sw)\notag\\
&\geq&-\frac1n(s+|l_n(s)-1|\|\nabla u_n\|_{L^2(\Omega)})+o(s)+o(|l_n(s)-1|\|\nabla u_n\|_{L^2(\Omega)}).\label{eq0010}
\end{eqnarray}
By \eqref{eq0009} and Lemma~\ref{lem0003}, we can see that $\{u_n\}$ is bounded in $\h$.  It follows from Claim~2 that $|l_n'(0)|\leq d_2$, where $d_2>0$ is a constant.   Since $|l_n(s)-1|\|\nabla u_n\|_{L^2(\Omega)}\to0$ as $s\to0$, by multiplying $s^{-1}$ on both side of \eqref{eq0010} and letting $s\to0^+$, we have $\mathcal{E}'(u_n)w\geq-\frac{d_3}{n}$, where $d_3>0$ is a constant.  Since $w\in\mathbb{B}_1$ is arbitrary, we must have that $\mathcal{E}'(u_n)w=o_n(1)$, where $o_n(1)$ is independent of $w\in\mathbb{B}_1$.  Thus, $\mathcal{E}'(u_n)=o_n(1)$ strongly in $H^{-1}(\Omega)$.
\end{proof}

Now, we can obtain the following.
\begin{proposition}\label{prop0002}
Let $N=4$, $2<q<4$ and $b\mathcal{S}^2<\mu$.  If we also have \eqref{eq0013},
then $(\mathcal{P}_{a,b,\lambda,\mu})$ has a solution minimizing $\mathcal{E}(u)$ on $\mathcal{N}^-$.
\end{proposition}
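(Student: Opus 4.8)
The plan is to run exactly the scheme of the proof of Proposition~\ref{prop0001}, now using Lemma~\ref{lem0004} to furnish a \emph{compact} minimizing sequence. Note first that the hypothesis \eqref{eq0013} is literally condition \eqref{eq0011}, so Lemma~\ref{lem0004} applies: the Ekeland sequence $\{u_n\}\subset\mathcal{N}^-$ produced just before it (with properties $(a)$ and $(b)$) is a bounded $(PS)$ sequence of $\mathcal{E}(u)$ at the level $m^-$, and by \cite[Remark~4.2]{N14} we may in addition assume $u_n\geq0$.

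First I would invoke Neimen's splitting/compactness result \cite[Lemma~2.1]{N14} together with the strict upper bound $m^-<\frac{a^2\mathcal{S}^2}{4(\mu-b\mathcal{S}^2)}$ from Lemma~\ref{lem0003}: since the energy level lies strictly below the threshold at which a bubble may split off, no concentration can occur, hence, up to a subsequence, $u_n\to u_0$ strongly in $\h$. Letting $n\to\infty$ in $\mathcal{E}'(u_n)=o_n(1)$ in $H^{-1}(\Omega)$ then gives $\mathcal{E}'(u_0)=0$, so $u_0$ is a weak solution, and $\mathcal{E}(u_0)=m^-$ by continuity of $\mathcal{E}$.

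Next I would check that $u_0$ really lies in $\mathcal{N}^-$ and is a positive solution. Strong convergence together with the lower bound $\|\nabla u_n\|_{L^2(\Omega)}\geq d_0>0$ (obtained in the course of proving Lemma~\ref{lem0003}, see \eqref{eq0002}) forces $\|\nabla u_0\|_{L^2(\Omega)}\geq d_0>0$, so $u_0\not\equiv0$ and, since $\mathcal{E}'(u_0)=0$, we get $u_0\in\mathcal{N}$. Claim~2 of Lemma~\ref{lem0004} gives $T_{u_n}''(1)\leq-d_1<0$ with $d_1$ independent of $n$; because $T_u''(1)=a\|\nabla u\|_{L^2(\Omega)}^2-(q-1)\lambda\|u\|_{L^q(\Omega)}^q+3(b\|\nabla u\|_{L^2(\Omega)}^4-\mu\|u\|_{L^4(\Omega)}^4)$ is continuous with respect to strong convergence in $\h$ (strong $H^1_0$-convergence yields convergence in $L^{2^*}=L^4$, and $L^q$-convergence follows from the compact embedding since $q<4$), we conclude $T_{u_0}''(1)\leq-d_1<0$, i.e. $u_0\in\mathcal{N}^-$. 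Combined with $\mathcal{E}(u_0)=m^-=\inf_{\mathcal{N}^-}\mathcal{E}(u)$, $u_0$ is the claimed minimizer. Finally $u_0\geq0$ and $u_0\not\equiv0$, so the strong maximum principle gives $u_0>0$ in $\Omega$, and $u_0$ solves $(\mathcal{P}_{a,b,\lambda,\mu})$.

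The main obstacle is the compactness step: one must rule out that the bounded $(PS)$ sequence at level $m^-$ loses mass through a concentrating bubble. This is exactly where the upper estimate $m^-<\frac{a^2\mathcal{S}^2}{4(\mu-b\mathcal{S}^2)}$ of Lemma~\ref{lem0003} is essential — it plays the role of the Brezis--Nirenberg threshold for the Kirchhoff functional in dimension four — and also where \eqref{eq0013} enters, since without it one cannot even be sure $\{u_n\}$ is a genuine $(PS)$ sequence (if $T_{u_n}''(1)$ degenerated, the quantity $l_n'(0)$ appearing in Lemma~\ref{lem0004} could blow up). Everything else is a standard limiting argument.
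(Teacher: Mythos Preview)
Your compactness step has a genuine gap. The direct invocation of \cite[Lemma~2.1]{N14} works in Proposition~\ref{prop0001} because that lemma is tailored to the case $q=2$; the underlying mechanism is that if a bubble $v_i$ splits off, then in the energy decomposition $m^-=\widetilde{\mathcal{E}}(u_0)+\sum_i\widetilde{\mathcal{E}}_\infty(v_i)$ one has $\widetilde{\mathcal{E}}_\infty(v_i)\geq\frac{a^2\mathcal{S}^2}{4(\mu-b\mathcal{S}^2)}$ \emph{and} $\widetilde{\mathcal{E}}(u_0)\geq0$. The second inequality is where $q>2$ breaks the argument: using that $u_0$ solves $-(a+b\mathcal{A})\Delta u_0=\lambda u_0^{q-1}+\mu u_0^3$ one computes
\[
\widetilde{\mathcal{E}}(u_0)=\frac{a}{4}\|\nabla u_0\|_{L^2(\Omega)}^2-\frac{4-q}{4q}\lambda\|u_0\|_{L^q(\Omega)}^q,
\]
and for $2<q<4$ there is no reason for this to be non-negative. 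So ``energy below threshold $\Rightarrow$ no concentration'' is not justified when the weak limit $u_0$ is nonzero, and your proof does not close.

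The paper handles exactly this obstruction by a fibering trick on top of the full profile decomposition \cite[Proposition~1.7]{N14}. One first observes that each bubble satisfies $\|\nabla v_i\|_{L^2}^2=(a+b\mathcal{A})\mu^{-1}\mathcal{S}^2$, so the auxiliary function $\mathcal{H}_{v_i}(t)=\frac{a}{2}\|\nabla v_i\|^2t^2+\frac{b}{4}\|\nabla v_i\|^4t^4-\frac{\mu}{4}\|v_i\|_4^4t^4$ has its maximum at some $t_0<1$ with value $\frac{a^2\mathcal{S}^2}{4(\mu-b\mathcal{S}^2)}$. Since $u_n\in\mathcal{N}^-$ one has $\mathcal{E}(u_n)\geq\mathcal{E}(t_0u_n)\geq\mathcal{E}(t_0u_0)+k\cdot\frac{a^2\mathcal{S}^2}{4(\mu-b\mathcal{S}^2)}+o_n(1)$, which by Lemma~\ref{lem0003} forces either $k=0$ or $\mathcal{E}(t_0u_0)<0$. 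In the latter case Lemma~\ref{lem0001} supplies $\tilde t_*(u_0)<t_0$ with $\tilde t_*(u_0)u_0\in\mathcal{N}^-$; evaluating at $\tilde t_*(u_0)u_n$ instead and using that $\mathcal{H}_{v_i}$ is increasing on $(0,t_0)$ then yields $m^-\geq\mathcal{E}(\tilde t_*(u_0)u_0)+k\mathcal{H}_{v_i}(\tilde t_*(u_0))>m^-$, a contradiction. This two-step use of the fibering structure is the missing idea in your proposal.
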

\begin{proof}
\quad By Lemma~\ref{lem0004} and \cite[Proposition~1.7]{N14}, one of the following two cases must happen:
\begin{enumerate}
\item[$(a)$]  $\{u_n\}$ has a subsequence which strongly converges in $H^1_0(\Omega)$;
\item[$(b)$]  there exist a function $u_0\in H^1_0(\Omega)$ which is a weak convergence of $\{u_n\}$, a number $k\in \bbn$ and further, for every $i\in\{1, 2, \cdots k\}$, a sequence of value $\{R^i_n\}_{n\in\bbn}\subset \bbr^+$, points $\{x^i_n\}_{n\in\bbn}\subset\overline{\Omega}$ and a function $v_i\in\mathcal{D}^{1,2}(\bbr^4)$ satisfying
\begin{equation*}\label{eq9907}
-\Big\{a+b\Big(\|\nabla u_0\|^2_{L^2(\Omega)}+\sum_{j=1}^k\|\nabla v_j\|^2_{L^2(\bbr^4)}\Big)\Big\}\Delta
u_0 = \lambda |u_0|^{q-2}u_0+\mu u_0^{3}, \quad \text{in}\ \Omega,
\end{equation*}
and
\begin{equation}\label{eq9908}
-\Big\{a+b\Big(\|\nabla u_0\|^2_{L^2(\Omega)}+\sum_{j=1}^k\|\nabla v_j\|^2_{L^2(\bbr^4)}\Big)\Big\}\Delta
v_i = \mu v_i^{3}, \quad \text{in}\ \bbr^4,
\end{equation}
such that up to subsequences, there hold $R^i_ndist(x^i_n,
\partial\Omega)\to\infty$,
\begin{eqnarray}\label{eq9999}
\Big\|\nabla (u_n-u_0-\sum_{i=1}^k(R^i_n)^{2}v_i(R^i_n(\cdot-x^i_n)))\Big\|_{L^2(\bbr^4)}=o_n(1),
\end{eqnarray}
\begin{equation}\label{eq91001}
\|\nabla u_n\|^2_{L^2(\Omega)}=\|\nabla u_0\|^2_{L^2(\Omega)}+\sum_{i=1}^k\|\nabla v_i\|^2_{L^2(\bbr^4)}+o_n(1)
\end{equation}
and
\begin{eqnarray}\label{eq90019}
\mathcal{E}(u_n)=\widetilde{\mathcal{E}}(u_0)+\sum_{i=1}^k\widetilde{\mathcal{E}}_{\infty}(v_i)+o_n(1),
\end{eqnarray}
where
\begin{eqnarray*}
\widetilde{\mathcal{E}}(u_0)&=&\Big\{\frac{a}{2}+\frac{b}{4}\Big(\|\nabla u_0\|^2_{L^2(\Omega)}+\sum_{j=1}^k\|\nabla v_j\|^2_{L^2(\bbr^4)}\Big)\Big\}\|\nabla u_0\|^2_{L^2(\Omega)}\notag\\
&&-\frac{\lambda}{q}\|u_0\|^q_{L^q(\Omega)}-\frac{\mu}{2^*}\|u_0\|^{2^*}_{L^{2^*}(\Omega)},\label{eq9909}
\end{eqnarray*}
and
\begin{eqnarray*}\label{eq9910}
\widetilde{\mathcal{E}}_{\infty}(v_i)=\Big\{\frac{a}{2}+\frac{b}{4}\Big(\|\nabla u_0\|^2_{L^2(\Omega)}+\sum_{j=1}^k\|\nabla v_j\|^2_{L^2(\bbr^4)}\Big)\Big\}\|\nabla v_i\|^2_{L^2(\bbr^4)}-\frac{\mu}{2^*}\|v_i\|^{2^*}_{L^{2^*}(\bbr^4)}.
\end{eqnarray*}
\end{enumerate}
Thanks to \cite[Remark~4.2]{N14}, we may assume $u_n\geq0,u_0\geq0$ and $v_i>0$.  It follows from the calculations of \cite[Remark~4.3]{N14} and \cite[Theorem~1.1]{WHL15} (see also \cite[Theorem~1.1]{LLT15}) that $\|\nabla v_i\|^2_{L^2(\bbr^4)}=(a+b\mathcal{A})\mu^{-1}\mathcal{S}^{2}$ and $\|v_i\|^{4}_{L^{4}(\bbr^4)}=[(a+b\mathcal{A})\mu^{-1}]^{2}\mathcal{S}^{2}$ for all $i$, where $\mathcal{A}=\|\nabla u_0\|^2_{L^2(\Omega)}+\sum_{i=1}^k\|\nabla v_i\|^2_{L^2(\bbr^4)}$.  Consider the functions
\begin{eqnarray*}
\mathcal{H}_{v_i}(t)=\frac{a}{2}\|\nabla v_i\|^2_{L^2(\bbr^4)}t^2+\frac{b}{4}\|\nabla v_i\|^4_{L^2(\bbr^4)}t^4-\frac{\mu}{4}\|v_i\|^{4}_{L^{4}(\bbr^4)}t^4,\quad i=1,2,\cdots,k.
\end{eqnarray*}
By \eqref{eq9908}, it is easy to see that $\mu\|v_i\|^{4}_{L^{4}(\bbr^4)}-b\|\nabla v_i\|^4_{L^2(\bbr^4)}>0$ and
\begin{eqnarray*}
t_0&=&\bigg(\frac{a\|\nabla v_i\|^2_{L^2(\bbr^4)}}{\mu\|v_i\|^{4}_{L^{4}(\bbr^4)}-b\|\nabla v_i\|^4_{L^2(\bbr^4)}}\bigg)^{\frac12}\\
&=&\bigg(\frac{a}{(\mu-b\mathcal{S}^2)(a+b\mathcal{A})}\bigg)^{\frac12}\\
&<&1.
\end{eqnarray*}
Moreover, $\mathcal{H}_{v_i}(t)$ is strictly increasing for $t\in(0, t_0)$ and strictly decreasing for $(t_0, +\infty)$.  By Lemma~\ref{lem0001} and similar arguments as used for \eqref{eq91001} and \eqref{eq90019}, we have that
\begin{eqnarray}
\mathcal{E}(u_n)&\geq&\mathcal{E}(t_0u_n)\notag\\
&=&(\frac{a}{2}+\frac{b}{4}\|\nabla u_n\|_{L^2(\Omega)}^2t_0^2)\|\nabla u_n\|_{L^2(\Omega)}^2t_0^2-\frac{\lambda}{q}\|u_n\|_{L^q(\Omega)}^qt_0^q-\frac{\mu}{4}\|u_n\|_{L^{4}(\Omega)}^{4}t_0^4\notag\\
&\geq&\mathcal{E}(t_0u_0)+\sum_{i=1}^k\mathcal{H}_{v_i}(t_0)+o_n(1)\notag\\
&=&\mathcal{E}(t_0u_0)+k\frac{a^2\mathcal{S}^2}{4(\mu-b\mathcal{S}^2)}+o_n(1).\label{eq90020}
\end{eqnarray}
It follows from Lemma~\ref{lem0003} that either $k=0$ or $\mathcal{E}(t_0u_0)<0$.
Thanks to \eqref{eq9999}, we may assume that $k\geq1$ and $\mathcal{E}(t_0u_0)<0$.  By Lemma~\ref{lem0001} once more, we must have that there exists $\tilde{t}_*(u_0)<t_0$ such that $\tilde{t}_*(u_0)u_0\in\mathcal{N}^-$.  Now, we can see from the properties of $\mathcal{H}_{v_i}(t)$, a similar argument as used for \eqref{eq90020} and Lemma~\ref{lem0001} that
\begin{eqnarray*}
m^-+o_n(1)&=&\mathcal{E}(u_n)\\
&\geq&\mathcal{E}(\tilde{t}_*(u_0)u_n)\\
&\geq&\mathcal{E}(\tilde{t}_*(u_0)u_0)+\sum_{i=1}^k\mathcal{H}_{v_i}(\tilde{t}_*(u_0))+o_n(1)\\
&>&m^-.
\end{eqnarray*}
It is impossible.  Hence, we must have the case $(a)$.  It follows from the strong maximum principle that $u_0$ is a solution of $(\mathcal{P}_{a,b,\lambda,\mu})$ minimizing $\mathcal{E}(u)$ on $\mathcal{N}^-$ under \eqref{eq0013}.
\end{proof}

We close this section by

\noindent\textbf{Proof of Theorem~\ref{thm0004}:}  It follows immediately from Propositions~\ref{prop0001} and \ref{prop0002}.
\qquad\raisebox{-0.5mm}{%
\rule{1.5mm}{4mm}}\vspace{6pt}

\section{The case $N\geq5$}

\subsection{The Nehari manifold $\mathcal{N}$}
Let us first make some observations on the Nehari manifold $\mathcal{N}$ by the fibering maps $T_u(t)=\mathcal{E}(tu)$.
\begin{lemma}\label{lem0005}
Let $N\geq5$.
\begin{enumerate}
\item[$(1)$] If $q=2$ and $\lambda<a\sigma_1$, then $\chi_u\cap\mathcal{N}\neq \emptyset$ if and only if $\mathcal{G}(u)\leq0$, where
\begin{eqnarray*}
\mathcal{G}(u)=a\|\nabla u\|_{L^2(\Omega)}^2-\lambda\|u\|_{L^2(\Omega)}^2-\frac{(4-2^*)\mu}{2}\bigg[\frac{(2^*-2)\mu\|u\|_{L^{2^*}(\Omega)}^{2^*}}{2b\|\nabla u\|_{L^2(\Omega)}^4}\bigg]^{\frac{2^*-2}{4-2^*}}\|u\|_{L^{2^*}(\Omega)}^{2^*}).
\end{eqnarray*}
Moreover, there exists a unique $0<t_*(u)<\widetilde{t}_*(u)$ such that $t_*(u)u\in\mathcal{N}^-$ for $\mathcal{G}(u)<0$ and $T_u(1)=\max_{0\leq t\leq\widetilde{t}_*(u)}T_u(t)$, where $\widetilde{t}_*(u)=\bigg[\frac{(2^*-2)\mu\|u\|_{L^{2^*}(\Omega)}^{2^*}}{2b\|\nabla u\|_{L^2(\Omega)}^4}\bigg]^{\frac{1}{4-2^*}}$.
\item[$(2)$] If $q>2$ and $\mathcal{F}(u)<0$, then there exists a unique $0<t_*(u)<\tilde{t}_0(u)$ such that $t_*(u)u\in\mathcal{N}^-$,
where
\begin{eqnarray*}
\mathcal{F}(u)&=&a\|\nabla u\|_{L^2(\Omega)}^2-\lambda\|u\|_{L^q(\Omega)}^q\bigg[\frac{(2^*-2)(2^*-q)\mu\|u\|_{L^{2^*}(\Omega)}^{2^*}}{(4-q)2b\|\nabla u\|_{L^2(\Omega)}^4}\bigg]^{\frac{q-2}{4-2^*}}\\
&&-\frac{(4-2^*)(2^*+2-q)}{2(4-q)}\bigg[\frac{(2^*-2)(2^*-q)\mu\|u\|_{L^{2^*}(\Omega)}^{2^*}}{(4-q)2b\|\nabla u\|_{L^2(\Omega)}^4}\bigg]^{\frac{2^*-2}{4-2^*}}\mu\|u\|_{L^{2^*}(\Omega)}^{2^*}
\end{eqnarray*}
and $\tilde{t}_0(u)=\bigg[\frac{(2^*-2)(2^*-q)\mu\|u\|_{L^{2^*}(\Omega)}^{2^*}}{(4-q)2b\|\nabla u\|_{L^2(\Omega)}^4}\bigg]^{\frac{1}{4-2^*}}$.  Moreover, $T_u(1)=\max_{0\leq t\leq\tilde{t}_0(u)}T_u(t)$.
\end{enumerate}
\end{lemma}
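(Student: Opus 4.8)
The plan is to reduce the lemma to a one-variable analysis of the fibering map $T_u(t)=\mathcal{E}(tu)$. First I would record that, since $2\le q<2^*<4$ when $N\ge5$, differentiating \eqref{eq01} gives $T_u'(t)=t\,\psi_u(t)$ where
$$\psi_u(t)=a\|\nabla u\|_{L^2(\Omega)}^2+b\|\nabla u\|_{L^2(\Omega)}^4t^2-\lambda\|u\|_{L^q(\Omega)}^qt^{q-2}-\mu\|u\|_{L^{2^*}(\Omega)}^{2^*}t^{2^*-2}.$$
Hence, for $t>0$, $tu\in\mathcal{N}$ iff $\psi_u(t)=0$; and since $T_u''(t)=\psi_u(t)+t\psi_u'(t)$, at any positive zero $t$ of $\psi_u$ one has $T_u''(t)=t\psi_u'(t)$, so $tu\in\mathcal{N}^-$ iff $\psi_u'(t)<0$ there. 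Thus the lemma becomes a statement about where the graph of $\psi_u$ crosses zero, and in particular about the sign of $\min_{t>0}\psi_u(t)$.

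For $(1)$, with $q=2$ the constant term of $\psi_u$ is $a\|\nabla u\|_{L^2(\Omega)}^2-\lambda\|u\|_{L^2(\Omega)}^2\ge a(1-\frac{\lambda}{a\sigma_1})\|\nabla u\|_{L^2(\Omega)}^2>0$ by the variational characterization of $\sigma_1$ and $\lambda<a\sigma_1$, while $\psi_u(t)\to+\infty$ as $t\to\infty$ (the power $2$ beats $2^*-2$). The factorization $\psi_u'(t)=t^{2^*-3}\big(2b\|\nabla u\|_{L^2(\Omega)}^4t^{4-2^*}-(2^*-2)\mu\|u\|_{L^{2^*}(\Omega)}^{2^*}\big)$ shows $\psi_u'$ changes sign exactly once, from $-$ to $+$, at $\widetilde{t}_*(u)$; so $\psi_u$ is strictly decreasing on $(0,\widetilde{t}_*(u))$ and strictly increasing afterwards, whence $\min_{t>0}\psi_u(t)=\psi_u(\widetilde{t}_*(u))$. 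Substituting $\widetilde{t}_*(u)^{4-2^*}=\frac{(2^*-2)\mu\|u\|_{L^{2^*}(\Omega)}^{2^*}}{2b\|\nabla u\|_{L^2(\Omega)}^4}$ back into $\psi_u$ and simplifying yields precisely $\mathcal{G}(u)$. Therefore $\chi_u\cap\mathcal{N}\ne\emptyset$ iff $\mathcal{G}(u)\le0$; and when $\mathcal{G}(u)<0$, the strict monotonicity of $\psi_u$ on $(0,\widetilde{t}_*(u))$ together with $\psi_u(0^+)>0$ gives a unique zero $t_*(u)\in(0,\widetilde{t}_*(u))$ with $\psi_u'(t_*(u))<0$, i.e. $t_*(u)u\in\mathcal{N}^-$, while $T_u'=t\psi_u$ is positive on $(0,t_*(u))$ and negative on $(t_*(u),\widetilde{t}_*(u))$, so $T_u$ attains its maximum over $[0,\widetilde{t}_*(u)]$ at $t_*(u)$; in particular $T_u(1)=\max_{0\le t\le\widetilde{t}_*(u)}T_u(t)$ for $u\in\mathcal{N}^-$.

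For $(2)$, with $q>2$ there are two negative power terms, so I would split off $T_{u,1}(t):=b\|\nabla u\|_{L^2(\Omega)}^4t^2-\lambda\|u\|_{L^q(\Omega)}^qt^{q-2}-\mu\|u\|_{L^{2^*}(\Omega)}^{2^*}t^{2^*-2}$, so that $\psi_u=a\|\nabla u\|_{L^2(\Omega)}^2+T_{u,1}$. Then $T_{u,1}'(t)=t^{q-3}\varphi(t)$ with $\varphi(t)=2b\|\nabla u\|_{L^2(\Omega)}^4t^{4-q}-(q-2)\lambda\|u\|_{L^q(\Omega)}^q-(2^*-2)\mu\|u\|_{L^{2^*}(\Omega)}^{2^*}t^{2^*-q}$, and a further differentiation shows $\varphi'$ changes sign exactly once, from $-$ to $+$, at $\tilde{t}_0(u)$; hence $\varphi$ is decreasing then increasing with minimum at $\tilde{t}_0(u)$, while $\varphi(0^+)=-(q-2)\lambda\|u\|_{L^q(\Omega)}^q<0$ and $\varphi(+\infty)=+\infty$, so $\varphi<0$ on some interval $(0,s_0)$ with $s_0>\tilde{t}_0(u)$. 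Consequently $T_{u,1}$, and hence $\psi_u$, is strictly decreasing on $(0,s_0)\supset(0,\tilde{t}_0(u)]$. Evaluating $\psi_u$ at $\tilde{t}_0(u)$, using $\tilde{t}_0(u)^{4-2^*}=\frac{(2^*-2)(2^*-q)\mu\|u\|_{L^{2^*}(\Omega)}^{2^*}}{2(4-q)b\|\nabla u\|_{L^2(\Omega)}^4}$ and the identity $(2^*-2)(2^*-q)-2(4-q)=-(4-2^*)(2^*+2-q)$, one finds $\psi_u(\tilde{t}_0(u))=\mathcal{F}(u)$. So if $\mathcal{F}(u)<0$, then $\psi_u$ decreases strictly from $\psi_u(0^+)=a\|\nabla u\|_{L^2(\Omega)}^2>0$ to the negative value $\mathcal{F}(u)$ at $\tilde{t}_0(u)$, giving a unique zero $t_*(u)\in(0,\tilde{t}_0(u))$ with $\psi_u'(t_*(u))=T_{u,1}'(t_*(u))<0$, i.e. $t_*(u)u\in\mathcal{N}^-$; and exactly as before $T_u'=t\psi_u$ is positive on $(0,t_*(u))$ and negative on $(t_*(u),\tilde{t}_0(u))$, so $T_u(1)=\max_{0\le t\le\tilde{t}_0(u)}T_u(t)$ for $u\in\mathcal{N}^-$.

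The monotonicity bookkeeping is routine once the right auxiliary functions are chosen. The genuinely delicate points are: (i) the algebraic identification of $\min_{t>0}\psi_u$ with $\mathcal{G}(u)$, resp. of $\psi_u(\tilde{t}_0(u))$ with $\mathcal{F}(u)$, which is what forces the unusual exponents in the statement and hinges on the cancellation $(2^*-2)(2^*-q)-2(4-q)=-(4-2^*)(2^*+2-q)$; and (ii) in part $(2)$, observing that although $\tilde{t}_0(u)$ is only the minimizer of the auxiliary $\varphi$ (not itself a critical point of $T_u$), it nevertheless lies strictly below the true sign-change point $s_0$ of $\psi_u$, so that $\psi_u$ is still strictly monotone on $(0,\tilde{t}_0(u)]$ and the zero produced there is the unique one with $\psi_u'<0$.
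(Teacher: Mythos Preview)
Your proposal is correct and follows essentially the same route as the paper's own proof: the paper also reduces to studying $\frac{T_u'(t)}{t}$ (your $\psi_u$), introduces the same auxiliary function $T_{u,1}$ in part~(2), and factors $T_{u,1}'(t)=t^{q-3}(\cdots)$ to locate the sign change at $\tilde t_0(u)$ (your $\varphi$ is the paper's factor, and your $s_0$ is the paper's $\tilde t_1(u)$). The only cosmetic difference is that the paper phrases part~(2) via the bound $\min_{t>0}T_{u,1}(t)\le T_{u,1}(\tilde t_0(u))$ whereas you evaluate $\psi_u(\tilde t_0(u))$ directly; the underlying computation and the algebraic identity $(2^*-2)(2^*-q)-2(4-q)=-(4-2^*)(2^*+2-q)$ are the same.
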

\begin{proof}
By a direct calculation, we can see that for every $u\in\h\backslash\{0\}$, we have
\begin{eqnarray*}
T_u'(t)=t(a\|\nabla u\|_{L^2(\Omega)}^2+b\|\nabla u\|_{L^2(\Omega)}^4t^2-\lambda\|u\|_{L^q(\Omega)}^qt^{q-2}-\mu\|u\|_{L^{2^*}(\Omega)}^{2^*}t^{2^*-2}).
\end{eqnarray*}

$(1)$\quad Let
\begin{eqnarray*}
T_{u,1}^*(t)=b\|\nabla u\|_{L^2(\Omega)}^4t^2-\mu\|u\|_{L^{2^*}(\Omega)}^{2^*}t^{2^*-2}.
\end{eqnarray*}
Then by a direct calculation, we can see that
\begin{eqnarray*}
(T_{u,1}^*)'(t)=2b\|\nabla u\|_{L^2(\Omega)}^4t-(2^*-2)\mu\|u\|_{L^{2^*}(\Omega)}^{2^*}t^{2^*-3}.
\end{eqnarray*}
It follows that $T_{u,1}^*(\widetilde{t}_*(u))=\min_{t\geq0}T_{u,1}^*(t)$ and $T_{u,1}^*(t)$ is strictly decreasing on $(0, \widetilde{t}_*(u))$ and strictly increasing on $(\widetilde{t}_*(u), +\infty)$.  This together with $q=2$ and $\lambda<a\sigma_1$, implies $\min_{t\geq0}\frac{T_u'(t)}{t}=\frac{T_u'(\widetilde{t}_*(u))}{\widetilde{t}_*(u)}=\mathcal{G}(u)$ and $\frac{T_u'(t)}{t}$ is strictly decreasing on $(0, \widetilde{t}_*(u))$ and strictly increasing on $(\widetilde{t}_*(u), +\infty)$.  Now, the conclusions follows immediately from the relations between $T_u(t)$ and $\mathcal{N}$ and the definition of $\mathcal{N}^-$.

$(2)$\quad Let
\begin{eqnarray*}
T_{u,1}(t)=b\|\nabla u\|_{L^2(\Omega)}^4t^2-\lambda\|u\|_{L^q(\Omega)}^qt^{q-2}-\mu\|u\|_{L^{2^*}(\Omega)}^{2^*}t^{2^*-2}.
\end{eqnarray*}
Since $2<q<2^*$, by a direct calculation, we can see that
\begin{eqnarray*}
T_{u,1}'(t)=t^{q-3}(2b\|\nabla u\|_{L^2(\Omega)}^4t^{4-q}-(2^*-2)\mu\|u\|_{L^{2^*}(\Omega)}^{2^*}t^{2^*-q}-(q-2)\lambda\|u\|_{L^q(\Omega)}^q).
\end{eqnarray*}
Let
\begin{eqnarray*}
T_{u,2}(t)=2b\|\nabla u\|_{L^2(\Omega)}^4t^{4-q}-(2^*-2)\mu\|u\|_{L^{2^*}(\Omega)}^{2^*}t^{2^*-q}.
\end{eqnarray*}
Then
\begin{eqnarray*}
T_{u,2}'(t)=t^{2^*-q-1}((4-q)2b\|\nabla u\|_{L^2(\Omega)}^4t^{4-2^*}-(2^*-q)(2^*-2)\mu\|u\|_{L^{2^*}(\Omega)}^{2^*}).
\end{eqnarray*}
Clearly, $T_{u,2}(\tilde{t}_0(u))=\min_{t\geq0}T_{u,2}(t)$ and $T_{u,2}(t)$ is strictly decreasing on $(0, \tilde{t}_0(u))$ and strictly increasing on $(\tilde{t}_0(u), +\infty)$.  Thus, there exists a unique $\tilde{t}_1(u)>\tilde{t}_0(u)$ such that $T_{u,1}'(\tilde{t}_1(u))=0$.  Moreover, $T_{u,1}(t)$ is strictly decreasing on $(0, \tilde{t}_1(u))$ and strictly increasing on $(\tilde{t}_1(u), +\infty)$.  It follows from a direct calculation that
\begin{eqnarray*}
T_{u,1}(\tilde{t}_1(u))&=&\min_{t\geq0}T_{u,1}(t)\\
&\leq& T_{u,1}(\tilde{t}_0(u))\\
&=&-\frac{(4-2^*)(2^*+2-q)}{2(4-q)}\bigg[\frac{(2^*-2)(2^*-q)\mu\|u\|_{L^{2^*}(\Omega)}^{2^*}}{(4-q)2b\|\nabla u\|_{L^2(\Omega)}^4}\bigg]^{\frac{2^*-2}{4-2^*}}\mu\|u\|_{L^{2^*}(\Omega)}^{2^*}\\
&&-\lambda\|u\|_{L^q(\Omega)}^q\bigg[\frac{(2^*-2)(2^*-q)\mu\|u\|_{L^{2^*}(\Omega)}^{2^*}}{(4-q)2b\|\nabla u\|_{L^2(\Omega)}^4}\bigg]^{\frac{q-2}{4-2^*}}.
\end{eqnarray*}
Now, the conclusions follows immediately from the relations between $T_u(t)$ and $\mathcal{N}^-$.
\end{proof}

Let $\{u_n\}\subset\h$ be a minimizing sequence of $\mathcal{S}$ and satisfy $\|u_n\|_{L^{2^*}(\Omega)}^{2^*}=\|\nabla u_n\|_{L^2(\Omega)}^2=\mathcal{S}^{\frac N2}+o_n(1)$ and $\|u_n\|_{L^{2}(\Omega)}^{2}=o_n(1)$.  Then we can easily see from a direct calculation that
\begin{eqnarray}
\mathcal{F}(u_n)&\leq& a\|\nabla u_n\|_{L^2(\Omega)}^2-\frac{(4-2^*)(2^*+2-q)}{2(4-q)}\bigg[\frac{(2^*-2)(2^*-q)\mu\|u_n\|_{L^{2^*}(\Omega)}^{2^*}}{(4-q)2b\|\nabla u_n\|_{L^2(\Omega)}^4}\bigg]^{\frac{2^*-2}{4-2^*}}\mu\|u_n\|_{L^{2^*}(\Omega)}^{2^*}\notag\\
&=&a\mathcal{S}^{\frac N2}-\frac{(4-2^*)(2^*+2-q)\mu}{2(4-q)}\bigg[\frac{(2^*-2)(2^*-q)\mu}{(4-q)2b\mathcal{S}^{\frac N2}}\bigg]^{\frac{2^*-2}{4-2^*}}\mathcal{S}^{\frac N2}+o_n(1).\label{eq0016}
\end{eqnarray}
and
\begin{eqnarray}
\mathcal{G}(u_n)&=&a\|\nabla u_n\|_{L^2(\Omega)}^2-\lambda\|u_n\|_{L^2(\Omega)}^2-\frac{(4-2^*)\mu}{2}\bigg[\frac{(2^*-2)\mu\|u_n\|_{L^{2^*}(\Omega)}^{2^*}}{2b\|\nabla u_n\|_{L^2(\Omega)}^4}\bigg]^{\frac{2^*-2}{4-2^*}}\|u_n\|_{L^{2^*}(\Omega)}^{2^*}\notag\\
&=&a\mathcal{S}^{\frac N2}-\frac{(4-2^*)\mu}{2}\bigg[\frac{(2^*-2)\mu}{2b\mathcal{S}^{\frac N2}}\bigg]^{\frac{2^*-2}{4-2^*}}\mathcal{S}^{\frac N2}+o_n(1)\label{eq60016}
\end{eqnarray}
Thus, if the conditions $(D1)$ and $(D2)$ hold, then $\mathcal{F}(u_n)<0$ and $\mathcal{G}(u_n)<0$ for $n$ large enough.  Hence, by Lemma~\ref{lem0005}, we have the following.
\begin{lemma}\label{lem0006}
Let $N\geq5$.  Then $\mathcal{N}^-$ is a nonempty set under one of the following two cases
\begin{enumerate}
\item[$(a)$] $q=2$, $0<\lambda<a\sigma_1$ and the condition $(D1)$ holds;
\item[$(b)$] $q>2$ and the condition $(D2)$ holds.
\end{enumerate}
\end{lemma}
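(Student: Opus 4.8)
The plan is to produce an explicit member of $\mathcal{N}^-$ by rescaling a concentrating minimizing sequence for the Sobolev constant $\mathcal{S}$, which is precisely the sequence introduced just above the statement. First I would fix $\{u_n\}\subset\h$ realizing $\mathcal{S}$; by the scale invariance of the Rayleigh quotient one may normalize it so that $\|u_n\|_{L^{2^*}(\Omega)}^{2^*}=\|\nabla u_n\|_{L^2(\Omega)}^2=\mathcal{S}^{\frac N2}+o_n(1)$, and — crucially using $N\geq5$, which forces the $L^2$ mass of a concentrating Aubin--Talenti profile to escape to zero under rescaling — also $\|u_n\|_{L^2(\Omega)}^2=o_n(1)$. (The standard construction is to truncate the extremal bubbles $U_\varepsilon$ near a point of $\Omega$ and let $\varepsilon\to0$; since $U\in L^2(\bbrn)$ iff $N\geq5$, one has $\|U_\varepsilon\|_{L^2}^2=O(\varepsilon^2)\to0$, the truncation contributing only lower-order errors.)

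Next I would substitute this sequence into the functionals $\mathcal{F}$ and $\mathcal{G}$ from Lemma~\ref{lem0005}. Inserting the normalization and letting $n\to\infty$ gives exactly the expansions \eqref{eq0016} and \eqref{eq60016}: in the case $q=2$ the $\lambda$-term is killed by $\|u_n\|_{L^2(\Omega)}^2=o_n(1)$, so $\mathcal{G}(u_n)=\big(a-\frac{(4-2^*)\mu}{2}\big[\frac{(2^*-2)\mu}{2b\mathcal{S}^{N/2}}\big]^{\frac{2^*-2}{4-2^*}}\big)\mathcal{S}^{\frac N2}+o_n(1)$, whose leading coefficient is the left side of $(D1)$; in the case $q>2$ one gets the analogous bound for $\mathcal{F}(u_n)$ whose leading coefficient is the left side of $(D2)$. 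Hence condition $(D1)$ forces $\mathcal{G}(u_n)<0$ for all large $n$ in case $(a)$, and condition $(D2)$ forces $\mathcal{F}(u_n)<0$ for all large $n$ in case $(b)$.

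Finally I would invoke Lemma~\ref{lem0005}. In case $(a)$, part $(1)$ of that lemma applied to $u=u_n$ for $n$ large (where $q=2$, $0<\lambda<a\sigma_1$ and $\mathcal{G}(u_n)<0$) yields a unique $t_*(u_n)>0$ with $t_*(u_n)u_n\in\mathcal{N}^-$; in case $(b)$, part $(2)$ applied to $u=u_n$ for $n$ large (where $q>2$ and $\mathcal{F}(u_n)<0$) yields $t_*(u_n)u_n\in\mathcal{N}^-$. In either case $\mathcal{N}^-\neq\emptyset$, which is the assertion.

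I expect the only genuinely delicate point to be the first step: arranging a single minimizing sequence with all three asymptotics simultaneously, and in particular justifying $\|u_n\|_{L^2(\Omega)}^2=o_n(1)$, which is exactly where the hypothesis $N\geq5$ (equivalently $2^*<4$) enters. Everything afterwards is bookkeeping — plugging into the closed forms of $\mathcal{F}$ and $\mathcal{G}$, reading off the signs from $(D1)$ and $(D2)$, and quoting the fibering-map analysis already carried out in Lemma~\ref{lem0005}.
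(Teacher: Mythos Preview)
Your proposal is correct and follows essentially the same argument as the paper: take a concentrating minimizing sequence for $\mathcal{S}$ with the normalization $\|u_n\|_{L^{2^*}(\Omega)}^{2^*}=\|\nabla u_n\|_{L^2(\Omega)}^2=\mathcal{S}^{N/2}+o_n(1)$ and $\|u_n\|_{L^2(\Omega)}^2=o_n(1)$, plug into $\mathcal{G}$ and $\mathcal{F}$ to obtain \eqref{eq60016} and \eqref{eq0016}, read off the sign from $(D1)$ and $(D2)$, and invoke Lemma~\ref{lem0005}. One small remark: the vanishing $\|u_n\|_{L^2(\Omega)}^2=o_n(1)$ for truncated bubbles on a bounded domain actually holds for every $N\geq3$, so the hypothesis $N\geq5$ is not strictly needed for that step---it enters instead through $2^*<4$, which underlies the fibering-map structure of Lemma~\ref{lem0005}.
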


By Lemma~\ref{lem0006}, we can see that $m^-=\inf_{\mathcal{N}^-}\mathcal{E}(u)$ is well defined.  In what follows, let us give some estimates on $m^-$.  The following lemma is useful in estimating $m^-$ and proving the local compactness of the $(PS)$ sequence.
\begin{lemma}\label{lem6001}
Let $N\geq5$ and the condition $(D1)$ holds.  Then there exist unique $\tilde{t}_2<\tilde{t}_3<\tilde{t}_4$ such that $\tilde{t}_2$ is the unique local maximum point of $g(t)$ in $\bbr^+$, $\tilde{t}_4$ is the unique local minimum point of $g(t)$ in $\bbr^+$ and $g(\tilde{t}_2)=\max_{0\leq t<\tilde{t}_4}g(t)$, where $\tilde{t}_3=\bigg[\frac{(2^*-2)\mu}{2b\mathcal{S}^{\frac N2}}\bigg]^{\frac{1}{4-2^*}}$ and $g(t)=\frac{a}{2}\mathcal{S}^{\frac N2}t^2+\frac{b}{4}\mathcal{S}^{N}t^4-\frac{\mu}{2^*}\mathcal{S}^{\frac N2}t^{2^*}$.
\end{lemma}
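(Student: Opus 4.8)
The plan is to reduce the lemma to a one-variable analysis of $g$ through its derivative, which factors cleanly. First I would compute, for $t>0$,
\begin{eqnarray*}
g'(t)=t\,h(t),\qquad h(t):=a\mathcal{S}^{\frac N2}+b\mathcal{S}^{N}t^2-\mu\mathcal{S}^{\frac N2}t^{2^*-2},
\end{eqnarray*}
so that the positive critical points of $g$ are exactly the zeros of $h$. Since $N\geq5$ forces $2<2^*<4$, we have $2^*-2<2$ and $4-2^*>0$; hence $h(0)=a\mathcal{S}^{\frac N2}>0$, and, the quadratic term dominating, $h(t)\to+\infty$ as $t\to+\infty$. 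Moreover
\begin{eqnarray*}
h'(t)=t^{2^*-3}\big(2b\mathcal{S}^{N}t^{4-2^*}-(2^*-2)\mu\mathcal{S}^{\frac N2}\big),
\end{eqnarray*}
and since $t\mapsto t^{4-2^*}$ is strictly increasing on $\bbr^+$, $h$ has a single critical point there, namely $\tilde t_3=\big[\frac{(2^*-2)\mu}{2b\mathcal{S}^{\frac N2}}\big]^{\frac1{4-2^*}}$; $h$ is strictly decreasing on $(0,\tilde t_3)$ and strictly increasing on $(\tilde t_3,+\infty)$, so $\tilde t_3$ is the unique global minimum point of $h$ on $\bbr^+$.

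The crucial step is to evaluate $h(\tilde t_3)$ and recognize it in terms of $(D1)$. Using the defining relation $b\mathcal{S}^{N}\tilde t_3^{\,4-2^*}=\frac{(2^*-2)\mu\mathcal{S}^{\frac N2}}{2}$, I would rewrite $b\mathcal{S}^{N}\tilde t_3^{\,2}=\frac{(2^*-2)\mu\mathcal{S}^{\frac N2}}{2}\,\tilde t_3^{\,2^*-2}$ and, after collecting the two terms carrying $\tilde t_3^{\,2^*-2}$, obtain
\begin{eqnarray*}
h(\tilde t_3)=\mathcal{S}^{\frac N2}\Big(a-\frac{(4-2^*)\mu}{2}\Big[\frac{(2^*-2)\mu}{2b\mathcal{S}^{\frac N2}}\Big]^{\frac{2^*-2}{4-2^*}}\Big),
\end{eqnarray*}
which is precisely $\mathcal{S}^{\frac N2}$ times the left-hand side of $(D1)$; thus $(D1)$ is exactly the statement $h(\tilde t_3)<0$. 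Combining $h(0)>0$, the strict monotonicity of $h$ on each side of $\tilde t_3$, $h(\tilde t_3)<0$, and $h(t)\to+\infty$, the intermediate value theorem yields exactly two zeros of $h$ on $\bbr^+$: one $\tilde t_2\in(0,\tilde t_3)$ and one $\tilde t_4\in(\tilde t_3,+\infty)$, with $h>0$ on $(0,\tilde t_2)$, $h<0$ on $(\tilde t_2,\tilde t_4)$, and $h>0$ on $(\tilde t_4,+\infty)$.

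Finally, since $g'(t)=t\,h(t)$, this sign pattern shows that $g$ is strictly increasing on $(0,\tilde t_2)$, strictly decreasing on $(\tilde t_2,\tilde t_4)$, and strictly increasing on $(\tilde t_4,+\infty)$; hence $\tilde t_2$ is the unique local maximum point of $g$ on $\bbr^+$ and $\tilde t_4$ the unique local minimum point, and because $g$ climbs to $g(\tilde t_2)$ on $[0,\tilde t_2]$ and decreases from $g(\tilde t_2)$ on $[\tilde t_2,\tilde t_4)$, it follows that $g(\tilde t_2)=\max_{0\leq t<\tilde t_4}g(t)$, which proves the lemma. The only point requiring genuine care is the bookkeeping of the exponents $2^*-2$, $4-2^*$ and $\frac{2^*-2}{4-2^*}$ when simplifying $h(\tilde t_3)$ into the form appearing in $(D1)$; everything else is elementary single-variable calculus, so I do not expect a real obstacle.
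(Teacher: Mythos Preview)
Your proof is correct and follows essentially the same approach as the paper: the paper writes $\frac{g'(t)}{t}$ (your $h(t)$), observes it is strictly decreasing on $(0,\tilde t_3)$ and strictly increasing on $(\tilde t_3,+\infty)$ with minimum value negative under $(D1)$, and then draws the same conclusions about $\tilde t_2$ and $\tilde t_4$. You have simply spelled out in full the computation of $h(\tilde t_3)$ and its identification with condition $(D1)$, which the paper leaves implicit by referring back to the analogous argument in Lemma~\ref{lem0005}(1).
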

\begin{proof}
By a similar argument as used in $(1)$ of Lemma~\ref{lem0005}, we can see that $\min_{t\geq0}\frac{g'(t)}{t}=\frac{g'(\tilde{t}_3)}{\tilde{t}_3}$ and $\frac{g'(t)}{t}$ is strictly decreasing on $(0, \tilde{t}_3)$ and strictly increasing on $(\tilde{t}_3, +\infty)$.  Note that $\frac{g'(\tilde{t}_3)}{\tilde{t}_3}<0$ under the condition $(D1)$.  Thus, there exist unique $\tilde{t}_2<\tilde{t}_3<\tilde{t}_4$ such that $\tilde{t}_2$ is the unique local maximum point of $g(t)$ in $\bbr^+$, $\tilde{t}_4$ is the unique local minimum point of $g(t)$ in $\bbr^+$ and $g(\tilde{t}_2)=\max_{0\leq t<\tilde{t}_4}g(t)$.
\end{proof}

Now, we can give some estimates on $m^-$.
\begin{lemma}\label{lem0007}
Let $N\geq5$.  Then we have $0<m^-<g(\tilde{t}_2)$ under one of the following two cases
\begin{enumerate}
\item[$(a)$] $q=2$, $0<\lambda<a\sigma_1$ and the condition $(D1)$ hold;
\item[$(b)$] $q>2$ and the conditions $(D_1)$--$(D2)$ hold.
\end{enumerate}
\end{lemma}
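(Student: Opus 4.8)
The plan is to establish the two inequalities $0<m^-$ and $m^-<g(\tilde{t}_2)$ separately, treating the cases $q=2$ and $q>2$ in parallel. For the lower bound, fix $u\in\mathcal{N}^-$ and exploit both defining relations simultaneously: the Nehari identity $a\|\nabla u\|_{L^2(\Omega)}^2+b\|\nabla u\|_{L^2(\Omega)}^4=\lambda\|u\|_{L^q(\Omega)}^q+\mu\|u\|_{L^{2^*}(\Omega)}^{2^*}$ and the strict inequality $T_u''(1)<0$, which after eliminating $\lambda\|u\|_{L^q(\Omega)}^q$ reads $(4-q)b\|\nabla u\|_{L^2(\Omega)}^4<(q-2)a\|\nabla u\|_{L^2(\Omega)}^2+(2^*-q)\mu\|u\|_{L^{2^*}(\Omega)}^{2^*}$. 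Writing $\mathcal{E}(u)=\mathcal{E}(u)-\tfrac1q\mathcal{E}'(u)u$ when $q>2$ (and $\mathcal{E}(u)-\tfrac1{2^*}\mathcal{E}'(u)u$ when $q=2$, where one also uses $\lambda<a\sigma_1$ so that $a\|\nabla u\|_{L^2(\Omega)}^2-\lambda\|u\|_{L^2(\Omega)}^2\ge a(1-\tfrac{\lambda}{a\sigma_1})\|\nabla u\|_{L^2(\Omega)}^2>0$) represents $\mathcal{E}(u)$ as a linear combination of $\|\nabla u\|_{L^2(\Omega)}^2$, $\|\nabla u\|_{L^2(\Omega)}^4$ and $\|u\|_{L^{2^*}(\Omega)}^{2^*}$ in which the Kirchhoff term $b\|\nabla u\|_{L^2(\Omega)}^4$ enters with the unfavourable sign (recall $2^*<4$ for $N\ge5$); bounding that term by the above consequence of $T_u''(1)<0$ and keeping careful track of the exponents $2<q<2^*<4$ yields $\mathcal{E}(u)\ge c\,a\|\nabla u\|_{L^2(\Omega)}^2$ for a constant $c>0$ independent of $u$. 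It then remains to bound $\|\nabla u\|_{L^2(\Omega)}$ away from $0$ on $\mathcal{N}^-$: from the Nehari identity and the embedding constants $\mathcal{S}$, $\mathcal{S}_q$, $\sigma_1$ one gets $a\le\lambda\mathcal{S}_q^{-q/2}\|\nabla u\|_{L^2(\Omega)}^{q-2}+\mu\mathcal{S}^{-2^*/2}\|\nabla u\|_{L^2(\Omega)}^{2^*-2}$ for $q>2$ (and $a(1-\tfrac{\lambda}{a\sigma_1})\le\mu\mathcal{S}^{-2^*/2}\|\nabla u\|_{L^2(\Omega)}^{2^*-2}$ for $q=2$), hence $\|\nabla u\|_{L^2(\Omega)}\ge d_0>0$ uniformly, and therefore $m^-\ge c\,a d_0^2>0$.

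For the upper bound, I would test with the standard family $v_\ve$ obtained by truncating an Aubin--Talenti instanton at a fixed interior point of $\Omega$, normalised so that $\|\nabla v_\ve\|_{L^2(\Omega)}^2=\mathcal{S}^{N/2}+O(\ve^{N-2})$, $\|v_\ve\|_{L^{2^*}(\Omega)}^{2^*}=\mathcal{S}^{N/2}+O(\ve^{N})$ and, crucially, $\|v_\ve\|_{L^2(\Omega)}^2\ge c_1\ve^2$ when $q=2$ (respectively $\|v_\ve\|_{L^q(\Omega)}^q\ge c_1\ve^{N-\frac{(N-2)q}{2}}$ when $q>2$). Since these norms lie within $O(\ve^{N-2})$ of the values for which $(D1)$ (respectively $(D1)$--$(D2)$) force $\mathcal{G}(v_\ve)<0$ (respectively $\mathcal{F}(v_\ve)<0$) --- which is exactly the content of the computations \eqref{eq60016} and \eqref{eq0016} --- Lemma~\ref{lem0005} provides a unique $t_*(v_\ve)>0$ with $t_*(v_\ve)v_\ve\in\mathcal{N}^-$ and $t_*(v_\ve)$ below the threshold $\widetilde{t}_*(v_\ve)$ (respectively $\tilde{t}_0(v_\ve)$); this threshold converges as $\ve\to0$ to a number strictly smaller than $\tilde{t}_4$ by Lemma~\ref{lem6001}, so $t_*(v_\ve)<\tilde{t}_4$ for $\ve$ small and $m^-\le\mathcal{E}(t_*(v_\ve)v_\ve)\le\sup_{0\le t\le\tilde{t}_4}\mathcal{E}(tv_\ve)$. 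Expanding $\mathcal{E}(tv_\ve)=g(t)+O(\ve^{N-2})-\tfrac\lambda q\|v_\ve\|_{L^q(\Omega)}^q t^q$ uniformly on the bounded interval $[0,\tilde{t}_4]$, I would split the supremum: for $t$ bounded away from $0$ one has $\mathcal{E}(tv_\ve)\le g(\tilde{t}_2)+O(\ve^{N-2})-c_2\ve^{\beta}$ with $\beta=2$ if $q=2$ and $\beta=N-\tfrac{(N-2)q}{2}$ if $q>2$, and since $N\ge5$ (for $q>2$ note also $q>\tfrac4{N-2}$) one has $\beta<N-2$, so this is $<g(\tilde{t}_2)$ once $\ve$ is small; for $t$ near $0$, $\mathcal{E}(tv_\ve)\le g(t)+O(\ve^{N-2})<g(\tilde{t}_2)$ because $g(0)=0$ and $g(\tilde{t}_2)>0$ is the maximum of $g$ over $[0,\tilde{t}_4)$ (Lemma~\ref{lem6001}). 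Combining the two ranges gives $\sup_{0\le t\le\tilde{t}_4}\mathcal{E}(tv_\ve)<g(\tilde{t}_2)$ and hence $m^-<g(\tilde{t}_2)$.

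The main obstacle is the lower bound. For $N\ge5$ the Kirchhoff term is super-critical ($2^*<4$), so no fixed combination $\mathcal{E}(u)-\theta\,\mathcal{E}'(u)u$ makes $b\|\nabla u\|_{L^2(\Omega)}^4$ cancel with a good sign in the way $\theta=\tfrac14$ does when $N=4$; the constraint $T_u''(1)<0$ defining $\mathcal{N}^-$ is precisely what is needed to dominate this term, and one must argue by a short case distinction according to the sign of $(4-q)b\|\nabla u\|_{L^2(\Omega)}^4-(q-2)a\|\nabla u\|_{L^2(\Omega)}^2$, keeping careful track of the three exponents $2<q<2^*<4$, to extract an honest positive multiple of $\|\nabla u\|_{L^2(\Omega)}^2$. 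A secondary technical nuisance, shared with the later compactness analysis, is that $t\mapsto\mathcal{E}(tv_\ve)$ is unbounded above (again because $2^*<4$), so the relevant supremum has to be confined to the finite window $[0,\tilde{t}_4]$ and one must check that the Nehari parameter $t_*(v_\ve)$ stays inside it --- which is exactly where Lemma~\ref{lem6001}, and thus condition $(D1)$, is used.
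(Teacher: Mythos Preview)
Your proposal is correct and follows essentially the same route as the paper: for the lower bound you combine the Nehari identity with $T_u''(1)<0$ to extract $\mathcal{E}(u)\ge c\,\|\nabla u\|_{L^2(\Omega)}^2$ and then bound $\|\nabla u\|_{L^2(\Omega)}$ away from zero via Sobolev (the paper uses $\theta=\tfrac14$ rather than your $\theta=\tfrac1q$ when $q>2$, but both yield $\tfrac{(q-2)a}{4q}\|\nabla u\|_{L^2(\Omega)}^2$ directly---no case distinction on the sign of $(4-q)b\|\nabla u\|_{L^2(\Omega)}^4-(q-2)a\|\nabla u\|_{L^2(\Omega)}^2$ is actually required). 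For the upper bound the paper likewise tests with a truncated Aubin--Talenti bubble $w_\ve$, uses $(D1)$/$(D2)$ to place $t_*(w_\ve)w_\ve\in\mathcal{N}^-$ with $t_*(w_\ve)<\tilde t_4$ via Lemma~\ref{lem6001}, and then compares $\mathcal{E}(t_*(w_\ve)w_\ve)$ with $g(\tilde t_2)$ using $\|w_\ve\|_{L^q(\Omega)}^q\ge d_1\ve^{N-(N-2)q/2}$; the only cosmetic difference is that the paper evaluates directly at $t_*(w_\ve)$ rather than taking a supremum over $[0,\tilde t_4]$ with your splitting near $t=0$.
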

\begin{proof}
Let us first estimate the low bound of $m^-$ for $q=2$.  Suppose $u\in\mathcal{N}^-$, then by the definition of $\mathcal{N}^-$, we can see that
\begin{eqnarray*}
a\|\nabla u\|_{L^2(\Omega)}^2-\lambda\|u\|_{L^2(\Omega)}^2+b\|\nabla u\|_{L^2(\Omega)}^4-\mu\|u\|_{L^{2^*}(\Omega)}^{2^*}=0
\end{eqnarray*}
and
\begin{eqnarray*}
a\|\nabla u\|_{L^2(\Omega)}^2-\lambda\|u\|_{L^2(\Omega)}^2+3b\|\nabla u\|_{L^2(\Omega)}^4-(2^*-1)\mu\|u\|_{L^{2^*}(\Omega)}^{2^*}<0.
\end{eqnarray*}
It follows that
\begin{eqnarray*}
(4-2^*)b\|\nabla u\|_{L^2(\Omega)}^4<(2^*-2)(a\|\nabla u\|_{L^2(\Omega)}^2-\lambda\|u\|_{L^2(\Omega)}^2),
\end{eqnarray*}
which together with $u\in\mathcal{N}^-\subset\mathcal{N}$ and $0<\lambda<a\sigma_1$, implies
\begin{eqnarray}
\mathcal{E}(u)&=&\mathcal{E}(u)-\frac{1}{2^*}\mathcal{E}'(u)u\notag\\
&=&\frac{2^*-2}{22^*}(a\|\nabla u\|_{L^2(\Omega)}^2-\lambda\|u\|_{L^2(\Omega)}^2)-\frac{4-2^*}{42^*}b\|\nabla u\|_{L^2(\Omega)}^4\notag\\
&>&\frac{2^*-2}{42^*}(a\|\nabla u\|_{L^2(\Omega)}^2-\lambda\|u\|_{L^2(\Omega)}^2)\notag\\
&\geq&\frac{(2^*-2)a}{42^*}(1-\frac{\lambda}{a\sigma_1})\|\nabla u\|_{L^2(\Omega)}^2.\label{eq60015}
\end{eqnarray}
On the other hand, by modifying the argument as used for \eqref{eq5002} trivially, we can show that $\|\nabla u\|_{L^2(\Omega)}\geq d_0$, where $d_0>0$ is a constant.  Thus, by \eqref{eq60015}, $m^-\geq\frac{(2^*-2)a}{42^*}(1-\frac{\lambda}{a\sigma_1})d_0^2>0$.  Next, we estimate the low bound of $m^-$ for $q>2$.  Suppose $u\in\mathcal{N}^-$, then by the definition of $\mathcal{N}^-$, we can see that
\begin{eqnarray*}
a\|\nabla u\|_{L^2(\Omega)}^2+b\|\nabla u\|_{L^2(\Omega)}^4-\lambda\|u\|_{L^q(\Omega)}^q-\mu\|u\|_{L^{2^*}(\Omega)}^{2^*}=0
\end{eqnarray*}
and
\begin{eqnarray*}
a\|\nabla u\|_{L^2(\Omega)}^2+3b\|\nabla u\|_{L^2(\Omega)}^4-(q-1)\lambda\|u\|_{L^q(\Omega)}^q-(2^*-1)\mu\|u\|_{L^{2^*}(\Omega)}^{2^*}<0.
\end{eqnarray*}
It follows that
\begin{eqnarray*}
(4-q)\lambda\|u\|_{L^q(\Omega)}^q+(4-2^*)\mu\|u\|_{L^{2^*}(\Omega)}^{2^*}<2a\|\nabla u\|_{L^2(\Omega)}^2,
\end{eqnarray*}
which together with $u\in\mathcal{N}^-\subset\mathcal{N}$, implies
\begin{eqnarray}
\mathcal{E}(u)&=&\mathcal{E}(u)-\frac{1}{4}\mathcal{E}'(u)u\notag\\
&=&\frac{1}{4}a\|\nabla u\|_{L^2(\Omega)}^2-\frac{4-q}{4q}\lambda\|u\|_{L^q(\Omega)}^q-\frac{4-2^*}{42^*}\mu\|u\|_{L^{2^*}(\Omega)}^{2^*}\notag\\
&\geq&\frac{1}{4}a\|\nabla u\|_{L^2(\Omega)}^2-\frac{1}{4q}\bigg((4-q)\lambda\|u\|_{L^q(\Omega)}^q+(4-2^*)\mu\|u\|_{L^{2^*}(\Omega)}^{2^*}\bigg)\notag\\
&>&\frac{q-2}{4q}a\|\nabla u\|_{L^2(\Omega)}^2.\label{eq0015}
\end{eqnarray}
On the other hand, since $u\in\mathcal{N}^-\subset\mathcal{N}$ and $b>0$, we can see from the definitions of $\mathcal{S}_q$ and $\mathcal{S}$ that
\begin{eqnarray}
a\|\nabla u\|_{L^2(\Omega)}^2&\leq&\lambda\|u\|_{L^q(\Omega)}^q+\mu\|u\|_{L^{2^*}(\Omega)}^{2^*}\notag\\
&\leq&\lambda\mathcal{S}_q^{-\frac{q}{2}}\|\nabla u\|_{L^2(\Omega)}^q+\mu\mathcal{S}^{-\frac{2^*}{2}}\|\nabla u\|_{L^2(\Omega)}^{2^*}.\label{eq0014}
\end{eqnarray}
Note that $2<q<2^*$, we must have from \eqref{eq0014} that $\|\nabla u\|_{L^2(\Omega)}\geq d_0>0$, where $d_0$ is a constant.  Now, by \eqref{eq0015}, we can see that $m^-\geq\frac{a(q-2)d_0^2}{4q}>0$.  We finally estimate the up bound of $m^-$ for all $2\leq q$.  Let
\begin{eqnarray*}
w_\ve=\frac{[N(N-2)]^{\frac{N-2}{4}}\ve^{\frac{N-2}{2}}}{(\ve^2+|x-x_0|^2)^{\frac{N-2}{2}}}\varphi.
\end{eqnarray*}
where $\ve>0$, $x_0\in\Omega$ and $\varphi\in C^\infty_0(\Omega)$ satisfying $\varphi\equiv1$ on a ball centered at $x_0$ and contained in $\Omega$.  Then it is well known that
$\|\nabla w_\ve\|_{L^2(\Omega)}^2=\mathcal{S}^{\frac N2}+O(\ve^{N-2})$, $\|w_\ve\|_{L^{2^*}(\Omega)}^{2^*}=\mathcal{S}^{\frac N2}+O(\ve^{N})$ and $\|w_\ve\|_{L^q(\Omega)}^q\geq d_1\ve^{N+q-\frac{qN}{2}}$ for $\ve$ small enough, where $d_1>0$ is a constant.  By similar arguments as used in \eqref{eq0016} and \eqref{eq60016}, we can see that $\mathcal{F}(w_\ve)<0$ for $\ve$ small enough under the condition $(D2)$ and $\mathcal{G}(w_\ve)<0$ for $\ve$ small enough under the condition $(D1)$.  It follows from Lemma~\ref{lem0005} that there exists a unique $0<t_*(w_\ve)$ such that $t_*(w_\ve)w_\ve\in\mathcal{N}^-$ for $\ve$ small enough, where $t_*(w_\ve)<\widetilde{t}_*(w_\ve)$ for $q=2$ and $t_*(w_\ve)<\tilde{t}_0(w_\ve)$ for $q>2$.  Thus, by the definitions of $\widetilde{t}_*(w_\ve)$ and $\tilde{t}_0(w_\ve)$, we have
\begin{eqnarray}
t_*(w_\ve)&<&\bigg[\frac{(2^*-2)\mu\mathcal{S}^{\frac N2}+O(\ve^N)}{2b\mathcal{S}^{N}+O(\ve^{2N-4})}\bigg]^{\frac{1}{4-2^*}}\notag\\
&=&\bigg[\frac{(2^*-2)\mu}{2b\mathcal{S}^{\frac N2}}\bigg]^{\frac{1}{4-2^*}}+O(\ve^{\frac{N}{4-2^*}})\label{eq60017}
\end{eqnarray}
for $q=2$ and
\begin{eqnarray}
t_*(w_\ve)&<&\bigg[\frac{(2^*-2)(2^*-q)\mu\mathcal{S}^{\frac N2}+O(\ve^N)}{(4-q)2b\mathcal{S}^{N}+O(\ve^{2N-4})}\bigg]^{\frac{1}{4-2^*}}\notag\\
&=&\bigg[\frac{(2^*-2)(2^*-q)\mu}{(4-q)2b\mathcal{S}^{\frac N2}}\bigg]^{\frac{1}{4-2^*}}+O(\ve^{\frac{N}{4-2^*}})\label{eq0017}
\end{eqnarray}
for $q>2$.  Since $2^*<4$ for $N\geq5$, by \eqref{eq60017}--\eqref{eq0017} and Lemma~\ref{lem6001}, we can see that $t_*(w_\ve)<\tilde{t}_4$ for $\ve$ small enough.  It follows from $2\leq q$, $N\geq5$ and Lemma~\ref{lem6001} once more that
\begin{eqnarray*}
m^-&\leq&\mathcal{E}(\tilde{t}_*(w_\ve)w_\ve)\\
&\leq& g(\tilde{t}_*(w_\ve))-d_2\ve^{N+q-\frac{qN}{2}}+O(\ve^{N-2})\\
&\leq& g(\tilde{t}_2)-d_2\ve^{N+q-\frac{qN}{2}}+O(\ve^{N-2})\\
&<&g(\tilde{t}_2)
\end{eqnarray*}
for $\ve$ small enough, where $d_2>0$ is a constant.
\end{proof}

\subsection{The subcase $q=2$}
Since $\mathcal{N}^-\not=\emptyset$, by the Ekeland's principle, there exists $\{u_n\}\subset\mathcal{N}^-$ such that
\begin{enumerate}
\item[$(a)$] $\mathcal{E}(u_n)=m^-+o_n(1)$;
\item[$(b)$] $\mathcal{E}(v)-\mathcal{E}(u_n)\geq-\frac1n\|\nabla (v-u_n)\|_{L^2(\Omega)}$ for all $v\in\mathcal{N}^-$.
\end{enumerate}
In what follows, we will show that under some further conditions on the parameters $a,b,\lambda,\mu$, we actually have that $\{u_n\}\subset\mathcal{N}^-$ is a bounded $(PS)$ sequence of $\mathcal{E}(u)$ at the energy value $m^-$ for $q=2$.
\begin{lemma}\label{lem60008}
Let $N\geq5$ and $2=q$.  If the conditions $(D0)$--$(D1)$ hold, then $\{u_n\}\subset\mathcal{N}^-$ is a bounded $(PS)$ sequence of $\mathcal{E}(u)$ at the energy value $m^-$.
\end{lemma}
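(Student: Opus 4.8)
The plan is to follow the template of Lemma~\ref{lem0004} (the case $N=4$, $q>2$), with the critical exponent $4$ there replaced by $2^*<4$ and the upper bound of Lemma~\ref{lem0003} replaced by that of Lemma~\ref{lem0007}. First I would fix $w$ in the unit sphere $\mathbb{B}_1=\{u\in\h\mid\|\nabla u\|_{L^2(\Omega)}=1\}$ and introduce
\begin{eqnarray*}
\mathcal{F}_{n,w}(l,s)&=&a\|\nabla(lu_n+sw)\|_{L^2(\Omega)}^2-\lambda\|lu_n+sw\|_{L^2(\Omega)}^2\\
&&+b\|\nabla(lu_n+sw)\|_{L^2(\Omega)}^4-\mu\|lu_n+sw\|_{L^{2^*}(\Omega)}^{2^*}.
\end{eqnarray*}
Since $\{u_n\}\subset\mathcal{N}^-$ and $T_{u_n}'(1)=0$, one has $\mathcal{F}_{n,w}(1,0)=0$ and $\frac{\partial\mathcal{F}_{n,w}}{\partial l}(1,0)=T_{u_n}''(1)<0$, where the Nehari identity gives $T_{u_n}''(1)=2b\|\nabla u_n\|_{L^2(\Omega)}^4-(2^*-2)\mu\|u_n\|_{L^{2^*}(\Omega)}^{2^*}$. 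The implicit function theorem then produces, for each $n$, a $C^1$ curve $l_n(s)\in[\frac12,\frac32]$ near $s=0$ with $l_n(0)=1$, $\{l_n(s)u_n+sw\}\subset\mathcal{N}^-$, and an explicit expression for $l_n'(0)$ whose denominator is $T_{u_n}''(1)$.

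Next I would establish boundedness of $\{u_n\}$ in $\h$. Since $0<\lambda<a\sigma_1$ throughout this subsection, \eqref{eq60015} applies to every $u\in\mathcal{N}^-$ and gives $\mathcal{E}(u_n)>\frac{(2^*-2)a}{42^*}(1-\frac{\lambda}{a\sigma_1})\|\nabla u_n\|_{L^2(\Omega)}^2$; combined with $\mathcal{E}(u_n)=m^-+o_n(1)$ and the finite bound $m^-<g(\tilde{t}_2)$ of Lemma~\ref{lem0007}, this bounds $\|\nabla u_n\|_{L^2(\Omega)}$, while $u_n\in\mathcal{N}$ together with the Sobolev inequality bounds it away from $0$.

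The core of the argument --- and the step I expect to be the main obstacle --- is to show $T_{u_n}''(1)\leq-d_1<0$ with $d_1$ independent of $n$; this is where $(D0)$--$(D1)$ are used. Arguing by contradiction, if $T_{u_n}''(1)=o_n(1)$ along a subsequence, then $(2^*-2)\mu\|u_n\|_{L^{2^*}(\Omega)}^{2^*}=2b\|\nabla u_n\|_{L^2(\Omega)}^4+o_n(1)$; inserting this into the Nehari identity $a\|\nabla u_n\|_{L^2(\Omega)}^2-\lambda\|u_n\|_{L^2(\Omega)}^2=\mu\|u_n\|_{L^{2^*}(\Omega)}^{2^*}-b\|\nabla u_n\|_{L^2(\Omega)}^4$ yields $a\|\nabla u_n\|_{L^2(\Omega)}^2-\lambda\|u_n\|_{L^2(\Omega)}^2=\frac{4-2^*}{2^*-2}b\|\nabla u_n\|_{L^2(\Omega)}^4+o_n(1)$. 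Since $\lambda<a\sigma_1$ forces $a\|\nabla u_n\|_{L^2(\Omega)}^2-\lambda\|u_n\|_{L^2(\Omega)}^2\geq a(1-\frac{\lambda}{a\sigma_1})\|\nabla u_n\|_{L^2(\Omega)}^2$, this gives $\|\nabla u_n\|_{L^2(\Omega)}^2\geq\frac{(2^*-2)a}{(4-2^*)b}(1-\frac{\lambda}{a\sigma_1})+o_n(1)$; substituting the same relation into the identity $\mathcal{E}(u_n)=\frac{2^*-2}{22^*}(a\|\nabla u_n\|_{L^2(\Omega)}^2-\lambda\|u_n\|_{L^2(\Omega)}^2)-\frac{4-2^*}{42^*}b\|\nabla u_n\|_{L^2(\Omega)}^4$ from \eqref{eq60015} collapses it to $\mathcal{E}(u_n)=\frac{4-2^*}{42^*}b\|\nabla u_n\|_{L^2(\Omega)}^4+o_n(1)$. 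Hence $m^-=\mathcal{E}(u_n)+o_n(1)\geq\frac{4-2^*}{42^*}b\big[\frac{(2^*-2)a}{(4-2^*)b}(1-\frac{\lambda}{a\sigma_1})\big]^2+o_n(1)$, and one then checks, comparing this lower bound with the sharp value of $g(\tilde{t}_2)$ from Lemma~\ref{lem6001} (where $N(2^*-2)=2\cdot2^*$ is used to reconcile the constants), that $(D0)$--$(D1)$ force the right-hand side to exceed $g(\tilde{t}_2)$, contradicting $m^-<g(\tilde{t}_2)$. Matching the constants and exponents of $(D0)$ against the estimate for $g(\tilde{t}_2)$ is the delicate part and the only genuinely new (as opposed to template-following) computation.

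Finally, with $|T_{u_n}''(1)|\geq d_1$ and $\{u_n\}$ bounded, the formula for $l_n'(0)$ shows $|l_n'(0)|\leq d_2$ uniformly in $n$ and $w\in\mathbb{B}_1$. Taking $v=l_n(s)u_n+sw\in\mathcal{N}^-$ in property $(b)$, Taylor expanding $\mathcal{E}$ at $u_n$, and using $\mathcal{E}'(u_n)u_n=0$, the inequality becomes $s\mathcal{E}'(u_n)w+o(s)\geq-\frac1n(s+|l_n(s)-1|\|\nabla u_n\|_{L^2(\Omega)})$; dividing by $s$ and letting $s\to0^+$ (with $|l_n(s)-1|=O(s)$) gives $\mathcal{E}'(u_n)w\geq-\frac{d_3}{n}$ with $d_3$ independent of $w\in\mathbb{B}_1$, so replacing $w$ by $-w$ yields $\|\mathcal{E}'(u_n)\|_{H^{-1}(\Omega)}=o_n(1)$. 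Together with the boundedness this shows $\{u_n\}\subset\mathcal{N}^-$ is a bounded $(PS)$ sequence of $\mathcal{E}$ at the level $m^-$.
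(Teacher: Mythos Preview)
Your overall architecture is the same as the paper's and the routine parts (implicit function theorem, boundedness via \eqref{eq60015}, the Taylor-expansion endgame) are fine. The one genuine gap is precisely the step you flag as ``delicate'': you say one compares the lower bound on $m^-$ with ``the sharp value of $g(\tilde t_2)$ from Lemma~\ref{lem6001}'', but Lemma~\ref{lem6001} gives no such value --- $\tilde t_2$ is only implicitly defined as the local maximum of a three-term function, and $g(\tilde t_2)$ has no closed form. Without an explicit upper bound for $g(\tilde t_2)$ in terms of $a,b,\mu,\mathcal S$ you cannot connect the contradiction to the constants in $(D0)$.

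The paper's missing ingredient is this: since $\tilde t_2$ is a strict local maximum, $g''(\tilde t_2)<0$, which after expanding means $2b\mathcal S^{N}\tilde t_2^{4}<(2^*-2)\mu\mathcal S^{N/2}\tilde t_2^{2^*}$. Using this to replace the quartic term in $g(\tilde t_2)$ yields
\[
g(\tilde t_2)<\frac{a}{2}\mathcal S^{N/2}\tilde t_2^{2}-\frac{(2^*+2)(4-2^*)}{8\cdot 2^*}\mu\,\mathcal S^{N/2}\tilde t_2^{2^*},
\]
and now the right-hand side is a two-term expression whose supremum over $t>0$ is computable in closed form:
\[
g(\tilde t_2)<\sup_{t>0}\Big(\frac{a}{2}\mathcal S^{N/2}t^{2}-\frac{(2^*+2)(4-2^*)}{8\cdot 2^*}\mu\,\mathcal S^{N/2}t^{2^*}\Big)
=\frac{a}{N}\Big[\frac{8a}{(2^*+2)(4-2^*)\mu}\Big]^{\frac{2}{2^*-2}}\mathcal S^{N/2}.
\]
It is this explicit bound, combined with your lower bound on $m^-$ (which coincides with the paper's, reached via \eqref{eq60015} and $\|\nabla u_n\|_{L^2(\Omega)}^{2}\geq\frac{(2^*-2)a}{(4-2^*)b}(1-\frac{\lambda}{a\sigma_1})+o_n(1)$), that produces the inequality contradicting $(D0)$. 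Once you insert this estimate, the rest of your outline goes through exactly as in the paper.
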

\begin{proof}
By making some trivial modifications in the proof of Lemma~\ref{lem0004}, we can show that there exist $\{\delta_n\}\subset \bbr^+$ and
$$
\{l_n(s)\}\subset C^1([-\delta_n, \delta_n], [\frac12, \frac32])
$$
such that $\{l_n(s)u_n+sw\}\subset\mathcal{N}^-$.  Moreover, $l_n(0)=1$ and
\begin{eqnarray*}
l_n'(0)=\frac{(4b\|\nabla u_n\|_{L^2(\Omega)}^2+2a)\int_{\Omega}\nabla u_n\nabla wdx-2\lambda\int_{\Omega}u_nwdx-2^*\mu\int_{\Omega}|u_n|^{2^*-2}u_nwdx}{-T_{u_n}''(1)}.
\end{eqnarray*}
We claim that $T_{u_n}''(1)\leq-d_0<0$, where $d_0>0$ is a constant independent of $n$.  Indeed, if not, then there exists a subsequence of $\{u_n\}$, which is still denoted by $\{u_n\}$, such that $T_{u_n}''(1)=o_n(1)$.  It follows from $\{u_n\}\subset\mathcal{N}^-\subset\mathcal{N}$ that
\begin{eqnarray*}
(4-2^*)b\|\nabla u_n\|_{L^2(\Omega)}^4=(2^*-2)(a\|\nabla u_n\|_{L^2(\Omega)}^2-\lambda\|u_n\|_{L^2(\Omega)}^2)+o_n(1),
\end{eqnarray*}
which together with a similar argument as used for \eqref{eq5002}, implies
\begin{eqnarray}\label{eq0018}
\|\nabla u_n\|_{L^2(\Omega)}^2\geq\frac{(2^*-2)a}{(4-2^*)b}(1-\frac{\lambda}{a\sigma_1})+o_n(1).
\end{eqnarray}
On the other hand, since the condition $(D1)$ holds, we have from Lemma~\ref{lem0007} that $m^-<g(\tilde{t}_2)$.  Note that $\tilde{t}_2$ is the unique local maximum point of $g(t)$ in $\bbr^+$.  Thus, $g''(\tilde{t}_2)<0$, which implies $2b\mathcal{S}^{N}\tilde{t}_2^4<(2^*-2)\mu \mathcal{S}^{\frac N2}\tilde{t}_2^{2^*}$.  It follows from $\frac{(2^*+2)(4-2^*)}{82^*}>0$ that
\begin{eqnarray*}
m^-&<&g(\tilde{t}_2)\\
&<&\frac{a}{2}\mathcal{S}^{\frac N2}\tilde{t}_2^2-\frac{(2^*+2)(4-2^*)\mu}{82^*}\mathcal{S}^{\frac N2}\tilde{t}_2^{2^*}\\
&\leq&\sup_{t\geq0}(\frac{a}{2}\mathcal{S}^{\frac N2}t^2-\frac{(2^*+2)(4-2^*)\mu}{82^*}\mathcal{S}^{\frac N2}t^{2^*})\\
&=&\frac{a}{N}\bigg[\frac{8a}{(2^*+2)(4-2^*)\mu}\bigg]^{\frac{2}{2^*-2}}\mathcal{S}^{\frac N2}.
\end{eqnarray*}
By \eqref{eq60015} and \eqref{eq0018}, we have
\begin{eqnarray*}
\bigg[\frac{8a}{(2^*+2)(4-2^*)\mu}\bigg]^{\frac{2}{2^*-2}}\mathcal{S}^{\frac N2}\geq\frac{N(2^*-2)^2a}{42^*(4-2^*)b}(1-\frac{\lambda}{a\sigma_1}),
\end{eqnarray*}
which contradicts to the condition $(D0)$.  Now, by a similar argument as used in Claim~3 of Lemma~\ref{lem0004}, we can show that $\{u_n\}$ is a bounded $(PS)$ sequence of $\mathcal{E}(u)$ at the energy value $m^-$, which completes the proof.
\end{proof}

Now, we can obtain the following.
\begin{proposition}\label{prop0003}
Let $N\geq5$ and $q=2$.  If the conditions $(D0)$--$(D1)$ hold, then $(\mathcal{P}_{a,b,\lambda,\mu})$ has a solution minimizing $\mathcal{E}(u)$ on $\mathcal{N}^-$.  Moreover, Problem~$(\mathcal{P}_{a,b,\lambda,\mu})$ has no solutions under the condition
\begin{eqnarray*}
a(1-\frac{\lambda}{a\sigma_1})\bigg[\frac{a\mathcal{S}^{\frac{2^*}{2}}}{\mu}(1-\frac{\lambda}{a\sigma_1})\bigg]^{\frac{2}{2^*-2}}>\bigg[\mu\mathcal{S}^{-\frac{2^*}{2}}b^{-\frac{2^*}{4}}\bigg]^{\frac{4}{4-2^*}}.
\end{eqnarray*}
\end{proposition}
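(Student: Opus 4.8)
\medskip\noindent\textbf{Proof proposal.}\quad The assertion splits into an existence statement and a nonexistence statement, which I would treat independently. For the existence part I would minimize $\mathcal{E}$ over $\mathcal{N}^-$ along the lines of Proposition~\ref{prop0002}. By Lemma~\ref{lem60008} the Ekeland sequence $\{u_n\}\subset\mathcal{N}^-$ is a bounded $(PS)$ sequence for $\mathcal{E}$ at level $m^-$, and after \cite[Remark~4.2]{N14} we may assume $u_n\geq0$. I would then apply the Struwe-type global compactness decomposition valid for $N\geq5$ (the analogue of \cite[Proposition~1.7]{N14}): up to a subsequence, either $u_n\to u_0$ strongly in $\h$, or there are $u_0\in\h$, an integer $k\geq1$, concentration data with $R_n^i\to\infty$, and nontrivial $v_i\in\mathcal{D}^{1,2}(\bbr^N)$ solving $-(a+b\mathcal{A})\Delta v_i=\mu v_i^{2^*-1}$ in $\bbr^N$ with $\mathcal{A}=\|\nabla u_0\|_{L^2(\Omega)}^2+\sum_j\|\nabla v_j\|_{L^2(\bbr^N)}^2$, together with the splitting identities $\|\nabla u_n\|_{L^2(\Omega)}^2=\|\nabla u_0\|_{L^2(\Omega)}^2+\sum_i\|\nabla v_i\|_{L^2(\bbr^N)}^2+o_n(1)$, $\|u_n\|_{L^{2^*}(\Omega)}^{2^*}=\|u_0\|_{L^{2^*}(\Omega)}^{2^*}+\sum_i\|v_i\|_{L^{2^*}(\bbr^N)}^{2^*}+o_n(1)$, $\|u_n\|_{L^2(\Omega)}^2=\|u_0\|_{L^2(\Omega)}^2+o_n(1)$ (the rescaled bubbles have vanishing $L^2$ mass since $R_n^i\to\infty$ and $N\geq5$), and $\mathcal{E}(u_n)=\widetilde{\mathcal{E}}(u_0)+\sum_i\widetilde{\mathcal{E}}_\infty(v_i)+o_n(1)$. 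The whole task is to rule out the second alternative.

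Testing the bubble equation against $v_i$ gives $(a+b\mathcal{A})\|\nabla v_i\|_{L^2(\bbr^N)}^2=\mu\|v_i\|_{L^{2^*}(\bbr^N)}^{2^*}$, so by the Talenti classification (cf. \cite[Theorem~1.1]{WHL15}, see also \cite[Theorem~1.1]{LLT15}) one gets $\|\nabla v_i\|_{L^2(\bbr^N)}^2=[(a+b\mathcal{A})\mu^{-1}]^{\frac{N-2}{2}}\mathcal{S}^{\frac N2}$ and $\|v_i\|_{L^{2^*}(\bbr^N)}^{2^*}=[(a+b\mathcal{A})\mu^{-1}]^{\frac N2}\mathcal{S}^{\frac N2}$ for every $i$. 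Setting $\rho=[(a+b\mathcal{A})\mu^{-1}]^{-\frac{N-2}{4}}$ and, as in Proposition~\ref{prop0002}, $\mathcal{H}_{v_i}(t)=\frac a2\|\nabla v_i\|_{L^2(\bbr^N)}^2t^2+\frac b4\|\nabla v_i\|_{L^2(\bbr^N)}^4t^4-\frac{\mu}{2^*}\|v_i\|_{L^{2^*}(\bbr^N)}^{2^*}t^{2^*}$, a direct computation shows $\mathcal{H}_{v_i}(t)=g(t/\rho)$ with $g$ the function of Lemma~\ref{lem6001}; in particular $\mathcal{H}_{v_i}$ has its local maximum at $t_0:=\rho\widetilde{t}_2$ with $\mathcal{H}_{v_i}(t_0)=g(\widetilde{t}_2)$, and $\mathcal{H}_{v_i}$ is increasing on $(0,t_0)$ with $\mathcal{H}_{v_i}(0)=0$. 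Moreover $\mathcal{H}_{v_i}'(1)=b\|\nabla v_i\|_{L^2(\bbr^N)}^2(\|\nabla v_i\|_{L^2(\bbr^N)}^2-\mathcal{A})\leq0$, so $t_0\leq1$. Since $u_n\in\mathcal{N}^-$, Lemma~\ref{lem0005}$(1)$ gives $\mathcal{E}(u_n)\geq\mathcal{E}(t_0u_n)$, and combining $\|\nabla u_n\|_{L^2(\Omega)}^4\geq\|\nabla u_0\|_{L^2(\Omega)}^4+\sum_i\|\nabla v_i\|_{L^2(\bbr^N)}^4+o_n(1)$ with the splitting identities yields
$$
m^-+o_n(1)=\mathcal{E}(u_n)\geq\mathcal{E}(t_0u_n)\geq\mathcal{E}(t_0u_0)+\sum_{i=1}^{k}\mathcal{H}_{v_i}(t_0)+o_n(1)=\mathcal{E}(t_0u_0)+kg(\widetilde{t}_2)+o_n(1),
$$
so $m^-\geq\mathcal{E}(t_0u_0)+kg(\widetilde{t}_2)$. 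As $m^-<g(\widetilde{t}_2)$ by Lemma~\ref{lem0007}, if $k\geq1$ then $\mathcal{E}(t_0u_0)<0$, and in particular $u_0\not\equiv0$.

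Now suppose $k\geq1$. Since $\lambda<a\sigma_1$ we have $a\|\nabla u_0\|_{L^2(\Omega)}^2-\lambda\|u_0\|_{L^2(\Omega)}^2>0$, hence $\mathcal{E}(tu_0)>0$ for small $t>0$ while $\mathcal{E}(t_0u_0)<0$; therefore $\mathcal{G}(u_0)<0$ and, by Lemma~\ref{lem0005}$(1)$, there is a unique $s_*u_0\in\mathcal{N}^-$ with $s_*\in(0,t_0)$ (the first local maximum of $T_{u_0}$, which necessarily precedes the first positive zero of $T_{u_0}$). Because $0<s_*<t_0$ and $\mathcal{H}_{v_i}$ is increasing on $(0,t_0)$ with $\mathcal{H}_{v_i}(0)=0$, we have $\mathcal{H}_{v_i}(s_*)>0$. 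Repeating the above estimate with $s_*\leq1$ in place of $t_0$, and using $\mathcal{E}(s_*u_0)\geq m^-$, we obtain $m^-\geq m^-+k\,\mathcal{H}_{v_i}(s_*)>m^-$, a contradiction. Hence $k=0$, $u_n\to u_0$ strongly in $\h$, and passing to the limit in $\mathcal{E}(u_n)=m^-+o_n(1)$, in $\mathcal{E}'(u_n)=o_n(1)$ and in $T_{u_n}''(1)\leq-d_0<0$ shows $u_0\in\mathcal{N}^-$ with $\mathcal{E}(u_0)=m^->0$ and $\mathcal{E}'(u_0)=0$; the strong maximum principle then gives $u_0>0$, the desired solution minimizing $\mathcal{E}$ on $\mathcal{N}^-$. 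I expect the crux to be precisely this exclusion of bubbles: as Naimen observed, individual bubble energies $\widetilde{\mathcal{E}}_\infty(v_i)$ can be negative at low energy, and the point of truncating at $t_0\leq1$ is that there each bubble contributes exactly its positive maximum $g(\widetilde{t}_2)$, after which the sign-indefinite remainder $\mathcal{E}(t_0u_0)$ is absorbed by the second truncation at $s_*$.

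The nonexistence part is short. If $u$ were a solution of $(\mathcal{P}_{a,b,\lambda,\mu})$ with $q=2$ and $\lambda<a\sigma_1$ then $u\in\mathcal{N}$, i.e. $a\|\nabla u\|_{L^2(\Omega)}^2+b\|\nabla u\|_{L^2(\Omega)}^4=\lambda\|u\|_{L^2(\Omega)}^2+\mu\|u\|_{L^{2^*}(\Omega)}^{2^*}$. With $X=\|\nabla u\|_{L^2(\Omega)}^2>0$ and the estimates $\|u\|_{L^2(\Omega)}^2\leq\sigma_1^{-1}X$, $\|u\|_{L^{2^*}(\Omega)}^{2^*}\leq\mathcal{S}^{-\frac{2^*}{2}}X^{\frac{2^*}{2}}$ this gives $a(1-\frac{\lambda}{a\sigma_1})+bX\leq\mu\mathcal{S}^{-\frac{2^*}{2}}X^{\frac{2^*-2}{2}}$. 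Since $0<\frac{2^*-2}{2}<1$ for $N\geq5$, the two consequences $a(1-\frac{\lambda}{a\sigma_1})\leq\mu\mathcal{S}^{-\frac{2^*}{2}}X^{\frac{2^*-2}{2}}$ and $bX\leq\mu\mathcal{S}^{-\frac{2^*}{2}}X^{\frac{2^*-2}{2}}$ force
$$
\bigg[\frac{a\mathcal{S}^{\frac{2^*}{2}}}{\mu}\Big(1-\frac{\lambda}{a\sigma_1}\Big)\bigg]^{\frac{2}{2^*-2}}\leq X\leq\bigg[\mu\mathcal{S}^{-\frac{2^*}{2}}b^{-1}\bigg]^{\frac{2}{4-2^*}},
$$
and a routine rearrangement of exponents shows that this chain of inequalities is exactly negated by the displayed hypothesis. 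Hence no such $u$ exists.
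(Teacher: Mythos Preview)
Your proof is correct and follows essentially the same route as the paper. Your scaling parameter $\rho=[(a+b\mathcal{A})\mu^{-1}]^{-\frac{N-2}{4}}$ and truncation point $t_0=\rho\widetilde{t}_2$ coincide with the paper's $\hat t$ (defined by $[(a+b\mathcal{A})\mu^{-1}]^{\frac{N-2}{4}}\hat t=\widetilde{t}_2$), and your second truncation at $s_*$ is the paper's $\widetilde{t}_*(u_0)$; the observation $\mathcal{H}_{v_i}(t)=g(t/\rho)$ is a clean repackaging of the paper's computation that $g'$ is negative at $[(a+b\mathcal{A})\mu^{-1}]^{\frac{N-2}{4}}$. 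For the nonexistence, you extract the two bounds $X\geq[\alpha/\beta]^{2/(2^*-2)}$ and $X\leq[\beta/b]^{2/(4-2^*)}$ directly from the Nehari identity (with $\alpha=a(1-\lambda/(a\sigma_1))$, $\beta=\mu\mathcal{S}^{-2^*/2}$) rather than invoking Young's inequality as the paper does; the resulting incompatibility $X_{\mathrm{lower}}>X_{\mathrm{upper}}$ is algebraically equivalent to the paper's displayed hypothesis, so the two arguments yield the same condition.
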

\begin{proof}
By Lemma~\ref{lem60008} and a similar argument of \cite[Proposition~1.7]{N14}, one of the following two cases must happen:
\begin{enumerate}
\item[$(a)$]  $\{u_n\}$ has a subsequence which strongly converges in $H^1_0(\Omega)$;
\item[$(b)$]  there exist a function $u_0\in H^1_0(\Omega)$ which is a weak convergence of $\{u_n\}$, a number $k\in \bbn$ and further, for every $i\in\{1, 2, \cdots k\}$, a sequence of value $\{R^i_n\}_{n\in\bbn}\subset \bbr^+$, points $\{x^i_n\}_{n\in\bbn}\subset\overline{\Omega}$ and a function $v_i\in\mathcal{D}^{1,2}(\bbr^N)$ satisfying
\begin{equation*}\label{eq07}
-\Big\{a+b\Big(\|\nabla u_0\|^2_{L^2(\Omega)}+\sum_{j=1}^k\|\nabla v_j\|^2_{L^2(\bbr^N)}\Big)\Big\}\Delta
u_0 = \lambda u_0+\mu |u_0|^{2^*-2}u_0, \quad \text{in}\ \Omega,
\end{equation*}
and
\begin{equation}\label{eq08}
-\Big\{a+b\Big(\|\nabla u_0\|^2_{L^2(\Omega)}+\sum_{j=1}^k\|\nabla v_j\|^2_{L^2(\bbr^N)}\Big)\Big\}\Delta
v_i = \mu |v_i|^{2^*-2}v_i, \quad \text{in}\ \bbr^N,
\end{equation}
such that up to subsequences, there hold $R^i_ndist(x^i_n,
\partial\Omega)\to\infty$,
\begin{eqnarray}\label{eq9998}
\Big\|\nabla (u_n-u_0-\sum_{i=1}^k(R^i_n)^{\frac{N-2}{2}}v_i(R^i_n(\cdot-x^i_n)))\Big\|_{L^2(\bbr^N)}=o_n(1),
\end{eqnarray}
\begin{equation}\label{eq1001}
\|\nabla u_n\|^2_{L^2(\Omega)}=\|\nabla u_0\|^2_{L^2(\Omega)}+\sum_{i=1}^k\|\nabla v_i\|^2_{L^2(\bbr^N)}+o_n(1)
\end{equation}
and
\begin{eqnarray}\label{eq0019}
\mathcal{E}(u_n)=\widetilde{\mathcal{E}}(u_0)+\sum_{i=1}^k\widetilde{\mathcal{E}}_{\infty}(v_i)+o_n(1),
\end{eqnarray}
where
\begin{eqnarray*}
\widetilde{\mathcal{E}}(u_0)&=&\Big\{\frac{a}{2}+\frac{b}{4}\Big(\|\nabla u_0\|^2_{L^2(\Omega)}+\sum_{j=1}^k\|\nabla v_j\|^2_{L^2(\bbr^N)}\Big)\Big\}\|\nabla u_0\|^2_{L^2(\Omega)}\notag\\
&&-\frac{\lambda}{2}\|u_0\|^2_{L^2(\Omega)}-\frac{\mu}{2^*}\|u_0\|^{2^*}_{L^{2^*}(\Omega)},\label{eq09}
\end{eqnarray*}
and
\begin{eqnarray*}\label{eq10}
\widetilde{\mathcal{E}}_{\infty}(v_i)=\Big\{\frac{a}{2}+\frac{b}{4}\Big(\|\nabla u_0\|^2_{L^2(\Omega)}+\sum_{j=1}^k\|\nabla v_j\|^2_{L^2(\bbr^N)}\Big)\Big\}\|\nabla v_i\|^2_{L^2(\bbr^N)}-\frac{\mu}{2^*}\|v_i\|^{2^*}_{L^{2^*}(\bbr^N)}.
\end{eqnarray*}
\end{enumerate}
Thanks to \cite[Remark~4.2]{N14}, we may assume $u_n\geq0,u_0\geq0$ and $v_i>0$.  It follows from the calculations of \cite[Remark~4.3]{N14} and \cite[Theorem~1.1]{WHL15} (see also \cite[Theorem~1.1]{LLT15}) that $\|\nabla v_i\|^2_{L^2(\bbr^N)}=[(a+b\mathcal{A})\mu^{-1}]^{\frac{N-2}{2}}\mathcal{S}^{\frac N2}$ and $\|v_i\|^{2^*}_{L^{2^*}(\bbr^N)}=[(a+b\mathcal{A})\mu^{-1}]^{\frac{N}{2}}\mathcal{S}^{\frac N2}$ for all $i$, where $\mathcal{A}=\|\nabla u_0\|^2_{L^2(\Omega)}+\sum_{i=1}^k\|\nabla v_i\|^2_{L^2(\bbr^N)}$.  Thus, we can see from \eqref{eq08} that
\begin{eqnarray*}
&&g'([(a+b\mathcal{A})\mu^{-1}]^{\frac{N-2}{4}})[(a+b\mathcal{A})\mu^{-1}]^{\frac{N-2}{4}}\\
&=&a[(a+b\mathcal{A})\mu^{-1}]^{\frac{N-2}{2}}\mathcal{S}^{\frac N2}+b[(a+b\mathcal{A})\mu^{-1}]^{N-2}\mathcal{S}^{N}-\mu[(a+b\mathcal{A})\mu^{-1}]^{\frac{N}{2}}\mathcal{S}^{\frac N2}\\
&=&a\|\nabla v_i\|^2_{L^2(\bbr^N)}+b\|\nabla v_i\|^4_{L^2(\bbr^N)}-\mu\|v_i\|^{2^*}_{L^{2^*}(\bbr^N)}\\
&<&0,
\end{eqnarray*}
where $g(t)$ is given by Lemma~\ref{lem6001}.
Since the condition $(D1)$ holds, we can see from Lemma~\ref{lem6001} once more that there exists $\hat{t}\leq1$ such that $[(a+b\mathcal{A})\mu^{-1}]^{\frac{N-2}{4}}\hat{t}=\tilde{t}_2$, where $\tilde{t}_2$ is the unique local maximum point of $g(t)$ in $\bbr^+$ and given by Lemma~\ref{lem6001}.  Now by \eqref{eq1001}, Lemma~\ref{lem0005} and a similar argument as used for \eqref{eq0019} (cf. \cite{N14}), we can see that
\begin{eqnarray}
\mathcal{E}(u_n)&\geq&\mathcal{E}(\hat{t}u_n)\notag\\
&=&(\frac{a}{2}+\frac{b}{4}\|\nabla u_n\|_{L^2(\Omega)}^2\hat{t}^2)\|\nabla u_n\|_{L^2(\Omega)}^2\hat{t}^2-\frac{\lambda}{q}\|u_n\|_{L^2(\Omega)}^2\hat{t}^2-\frac{\mu}{2^*}\|u_n\|_{L^{2^*}(\Omega)}^{2^*}\hat{t}^{2^*}\notag\\
&\geq&\mathcal{E}(\hat{t}u_0)+\sum_{i=1}^k(\frac{a}{2}\|\nabla v_i\|^2_{L^2(\bbr^N)}\hat{t}^2+\frac{b}{4}\|\nabla v_i\|^4_{L^2(\bbr^N)}\hat{t}^4-\frac{\mu}{2^*}\|v_i\|^{2^*}_{L^{2^*}(\bbr^N)}\hat{t}^{2^*})+o_n(1)\notag\\
&=&\mathcal{E}(\hat{t}u_0)+kg(\tilde{t}_2)+o_n(1).\label{eq0020}
\end{eqnarray}
Thus, thanks to Lemma~\ref{lem0007}, we must have that either $k=0$ or $\mathcal{E}(\hat{t}u_0)<0$.  Thanks to \eqref{eq9998}, we may assume that $k\geq1$ and $\mathcal{E}(\hat{t}u_0)<0$.  Since the condition $(D1)$ holds, it follows from Lemma~\ref{lem0005} once more that there exists $\tilde{t}_*(u_0)\leq \hat{t}$ such that $\tilde{t}_*(u_0)u_0\in\mathcal{N}^-$.  Now, we can see from the choices of $\tilde{t}_*(u_0)$ and $\hat{t}$, a similar argument as used for \eqref{eq0020} and Lemma~\ref{lem0005} that
\begin{eqnarray*}
m^-+o_n(1)&=&\mathcal{E}(u_n)\\
&\geq&\mathcal{E}(\tilde{t}_*(u_0)u_n)\\
&\geq&\mathcal{E}(\tilde{t}_*(u_0)u_0)+kg([(a+b\mathcal{A})\mu^{-1}]^{\frac{N-2}{4}}\tilde{t}_*(u_0))+o_n(1)\\
&>&m^-.
\end{eqnarray*}
It is impossible.  Hence, we must have the case $(a)$, which together with the strong maximum principle, implies Problem~$(\mathcal{P}_{a,b,\lambda,\mu})$ has a solution minimizes $\mathcal{E}(u)$ on $\mathcal{N}^-$.  We finish the proof by showing $(\mathcal{P}_{a,b,\lambda,\mu})$ has no solutions under the condition
\begin{eqnarray*}
a(1-\frac{\lambda}{a\sigma_1})\bigg[\frac{a\mathcal{S}^\frac{2^*}{2}}{\mu}(1-\frac{\lambda}{a\sigma_1})\bigg]^{\frac{2}{2^*-2}}>\bigg[\mu\mathcal{S}^{-\frac{2^*}{2}}b^{-\frac{2^*}{4}}\bigg]^{\frac{4}{4-2^*}}.
\end{eqnarray*}
Indeed, suppose $u$ is a solution of $(\mathcal{P}_{a,b,\lambda,\mu})$, then by a similar argument as used for \eqref{eq5002}, we can see that
\begin{eqnarray*}
\|\nabla u\|_{L^2(\Omega)}^2\geq\bigg[\frac{a\mathcal{S}^{\frac{2^*}{2}}}{\mu}(1-\frac{\lambda}{a\sigma_1})\bigg]^{\frac{2}{2^*-2}}.
\end{eqnarray*}
It follows from $u\in\mathcal{N}$, the definition of $\sigma_1$ and the Young inequality that
\begin{eqnarray*}
0&\geq& a(1-\frac{\lambda}{a\sigma_1})\|\nabla u\|_{L^2(\Omega)}^2+b\|\nabla u\|_{L^2(\Omega)}^4-\mu\|u\|_{L^{2^*}(\Omega)}^{2^*}\\
&\geq&a(1-\frac{\lambda}{a\sigma_1})\bigg[\frac{a\mathcal{S}^{\frac{2^*}{2}}}{\mu}(1-\frac{\lambda}{a\sigma_1})\bigg]^{\frac{2}{2^*-2}}-\bigg[\mu\mathcal{S}^{-\frac{2^*}{2}}b^{-\frac{2^*}{4}}\bigg]^{\frac{4}{4-2^*}}\\
&>0&,
\end{eqnarray*}
which is a contradiction.
\end{proof}

\subsection{The subcase $q>2$}
Since $\mathcal{N}^-\not=\emptyset$, by the Ekeland's principle, there exists $\{u_n\}\subset\mathcal{N}^-$ such that
\begin{enumerate}
\item[$(a)$] $\mathcal{E}(u_n)=m^-+o_n(1)$;
\item[$(b)$] $\mathcal{E}(v)-\mathcal{E}(u_n)\geq-\frac1n\|\nabla (v-u_n)\|_{L^2(\Omega)}$ for all $v\in\mathcal{N}^-$.
\end{enumerate}
In what follows, we will show that under some further conditions on the parameters $a,b,\lambda,\mu$, we actually have that $\{u_n\}\subset\mathcal{N}^-$ is a bounded $(PS)$ sequence of $\mathcal{E}(u)$ at the energy value $m^-$ for $q>2$.
\begin{lemma}\label{lem0008}
Let $N\geq5$ and $2<q<2^*$.  If the conditions $(D1)$--$(D3)$ hold, then $\{u_n\}\subset\mathcal{N}^-$ is a bounded $(PS)$ sequence of $\mathcal{E}(u)$ at the energy value $m^-$.
\end{lemma}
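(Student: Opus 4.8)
The plan is to follow the same three-step scheme used in Lemma~\ref{lem0004} for $N=4$ and in Lemma~\ref{lem60008} for $N\geq5,\ q=2$: (i) build, along each unit direction, a $C^1$ curve inside $\mathcal{N}^-$ through $u_n$ via the implicit function theorem; (ii) show that $T_{u_n}''(1)$ stays uniformly bounded away from $0$; and (iii) feed the Ekeland inequality $(b)$ into a Taylor expansion along those curves to upgrade $(b)$ to $\mathcal{E}'(u_n)=o_n(1)$ in $H^{-1}(\Omega)$, together with the boundedness of $\{u_n\}$ in $\h$.

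For step (i), I would fix $w\in\mathbb{B}_1:=\{u\in\h\mid\|\nabla u\|_{L^2(\Omega)}=1\}$ and put $\mathcal{F}_{n,w}(l,s)=a\|\nabla(lu_n+sw)\|_{L^2(\Omega)}^2+b\|\nabla(lu_n+sw)\|_{L^2(\Omega)}^4-\lambda\|lu_n+sw\|_{L^q(\Omega)}^q-\mu\|lu_n+sw\|_{L^{2^*}(\Omega)}^{2^*}$. Since $u_n\in\mathcal{N}^-$, one has $\mathcal{F}_{n,w}(1,0)=0$ and $\frac{\partial\mathcal{F}_{n,w}}{\partial l}(1,0)=T_{u_n}''(1)<0$, so the implicit function theorem yields $\delta_n>0$ and $l_n\in C^1([-\delta_n,\delta_n],[\frac12,\frac32])$ with $l_n(0)=1$, $l_n(s)u_n+sw\in\mathcal{N}$, and—after possibly shrinking $\delta_n$—in $\mathcal{N}^-$, with $l_n'(0)=-\frac{\partial_s\mathcal{F}_{n,w}(1,0)}{T_{u_n}''(1)}$.

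The crux is step (ii): I would prove $T_{u_n}''(1)\le-d_1<0$ with $d_1$ independent of $n$, arguing by contradiction. Eliminating $\lambda\|u_n\|_{L^q(\Omega)}^q$ by means of $u_n\in\mathcal{N}$ gives $T_{u_n}''(1)=(2-q)a\|\nabla u_n\|_{L^2(\Omega)}^2+(4-q)b\|\nabla u_n\|_{L^2(\Omega)}^4+(q-2^*)\mu\|u_n\|_{L^{2^*}(\Omega)}^{2^*}$; if $T_{u_n}''(1)=o_n(1)$ along a subsequence, rewriting as $(4-q)b\|\nabla u_n\|_{L^2(\Omega)}^4=(q-2)a\|\nabla u_n\|_{L^2(\Omega)}^2+(2^*-q)\mu\|u_n\|_{L^{2^*}(\Omega)}^{2^*}+o_n(1)$, dropping the nonnegative $\mu$-term, and using $\|\nabla u_n\|_{L^2(\Omega)}\ge d_0>0$ (as in \eqref{eq0014}) gives the lower bound $\|\nabla u_n\|_{L^2(\Omega)}^2\ge\frac{(q-2)a}{(4-q)b}+o_n(1)$, in analogy with \eqref{eq0008} and \eqref{eq0018}. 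On the other hand \eqref{eq0015} reads $\mathcal{E}(u_n)>\frac{q-2}{4q}a\|\nabla u_n\|_{L^2(\Omega)}^2$, while Lemma~\ref{lem0007} gives $m^-<g(\tilde{t}_2)$, and—exactly as in the proof of Lemma~\ref{lem60008}—the fact that $\tilde{t}_2$ is a strict local maximum of $g$ forces $g(\tilde{t}_2)<\sup_{t\ge0}\big(\frac a2\mathcal{S}^{N/2}t^2-\frac{(2^*+2)(4-2^*)\mu}{8\cdot2^*}\mathcal{S}^{N/2}t^{2^*}\big)=\frac aN\big[\frac{8a}{(2^*+2)(4-2^*)\mu}\big]^{\frac{2}{2^*-2}}\mathcal{S}^{N/2}$. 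Chaining these bounds with $(a)$ yields $\frac aN\big[\frac{8a}{(2^*+2)(4-2^*)\mu}\big]^{\frac{2}{2^*-2}}\mathcal{S}^{N/2}>\frac{q-2}{4q}a\|\nabla u_n\|_{L^2(\Omega)}^2+o_n(1)$, and inserting the lower bound on $\|\nabla u_n\|_{L^2(\Omega)}^2$ contradicts $(D3)$. Here $(D1)$ is what makes Lemma~\ref{lem6001} applicable, while $(D1)$--$(D2)$ are what guarantee $\mathcal{N}^-\neq\emptyset$ and the estimate of Lemma~\ref{lem0007}.

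Step (iii) is then routine. From $(a)$ and \eqref{eq0015}, $\{u_n\}$ is bounded in $\h$, hence $|l_n'(0)|\le d_2$ uniformly in $n$ and $w$ by the formula for $l_n'(0)$ and step (ii). Taking $v=l_n(s)u_n+sw$ in $(b)$, Taylor-expanding, and using $\mathcal{E}'(u_n)u_n=0$ gives $s\mathcal{E}'(u_n)w\ge-\frac1n\big(s+|l_n(s)-1|\|\nabla u_n\|_{L^2(\Omega)}\big)+o(s)+o(|l_n(s)-1|)$; dividing by $s>0$, letting $s\to0^+$, and replacing $w$ by $-w$ gives $|\mathcal{E}'(u_n)w|\le d_3/n$ uniformly over $w\in\mathbb{B}_1$, i.e. $\mathcal{E}'(u_n)=o_n(1)$ strongly in $H^{-1}(\Omega)$. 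The only genuinely hard part is step (ii): since $2^*<4$ and since an extra subcritical term $\lambda\|u_n\|_{L^q(\Omega)}^q$ is present, the sign bookkeeping inside $T_{u_n}''(1)$ and the comparison with $m^-$ are considerably more delicate than in dimension $4$, and it is exactly to close that comparison that the conditions $(D1)$--$(D3)$ are imposed.
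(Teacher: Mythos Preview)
Your proposal is correct and follows essentially the same route as the paper's proof, which explicitly says it mimics Lemma~\ref{lem60008} with the only new ingredient being the lower bound $\|\nabla u_n\|_{L^2(\Omega)}^2\ge\frac{(q-2)a}{(4-q)b}+o_n(1)$ (obtained as in \eqref{eq0004}) and its combination with \eqref{eq0015} and the upper bound $m^-<g(\tilde t_2)<\frac aN\big[\frac{8a}{(2^*+2)(4-2^*)\mu}\big]^{2/(2^*-2)}\mathcal{S}^{N/2}$ to contradict $(D3)$. Your write-up in fact spells out the details (the elimination of $\lambda\|u_n\|_{L^q(\Omega)}^q$ via the Nehari identity, and the precise role of each condition $(D1)$--$(D3)$) that the paper leaves implicit.
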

\begin{proof}
The proof is similar to that of Lemma~\ref{lem60008} and we only point out the difference.  Indeed, by a similar argument as used for \eqref{eq0004}, we can show that $\|\nabla u_n\|_{L^2(\Omega)}^2\geq\frac{(q-2)a}{(4-q)b}+o_n(1)$ if $T_{u_n}''(1)=o_n(1)$.  Now, since the conditions $(D1)$--$(D2)$ hold, by Lemma~\ref{lem0007}, we can show that $\frac{4q}{N(q-2)}\mathcal{S}^{\frac N2}\bigg[\frac{8a}{(2^*+2)(4-2^*)\mu}\bigg]^{\frac{2}{2^*-2}}\geq\frac{(q-2)a}{(4-q)b}$ if $T_{u_n}''(1)=o_n(1)$, which contradicts to the condition $(D3)$.  Thus, we must have that $T_{u_n}''(1)<-d_0$, where $d_0>0$ is a constant.  Now, by a similar argument as used in the proof of Lemma~\ref{lem60008}, we can show that $\{u_n\}\subset\mathcal{N}^-$ is a bounded $(PS)$ sequence of $\mathcal{E}(u)$ at the energy value $m^-$.
\end{proof}

Now, we can obtain the following.
\begin{proposition}\label{prop0004}
Let $N\geq5$ and $2<q<2^*$.  If the conditions $(D1)$--$(D3)$ hold, then $(\mathcal{P}_{a,b,\lambda,\mu})$ has a solution minimizing $\mathcal{E}(u)$ on $\mathcal{N}^-$.  Moreover, Problem~$(\mathcal{P}_{a,b,\lambda,\mu})$ has no solutions under the condition
\begin{eqnarray*}
b\geq\bigg(\lambda \mathcal{S}_q^{-\frac{q}{2}}(\frac{2}{a})^{4-q}\bigg)^{\frac{2}{q-2}}+\bigg(\mu \mathcal{S}^{-\frac{2^*}{2}}(\frac{2}{a})^{4-2^*}\bigg)^{\frac{2}{2^*-2}}.
\end{eqnarray*}
\end{proposition}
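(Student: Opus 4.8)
\medskip\noindent\textbf{Proof idea.}\quad
The existence part runs parallel to the proof of Proposition~\ref{prop0003}, the only substantial change being that for $q>2$ the subcritical term is handled via Lemma~\ref{lem0005}$(2)$ instead of Lemma~\ref{lem0005}$(1)$. I would start from the Ekeland sequence $\{u_n\}\subset\mathcal{N}^-$ and use Lemma~\ref{lem0008}: under $(D1)$--$(D3)$ it is a bounded $(PS)$ sequence of $\mathcal{E}$ at level $m^-$. Feeding it into the global compactness decomposition already used for Proposition~\ref{prop0003} (the analogue of \cite[Proposition~1.7]{N14}), either $\{u_n\}$ has a subsequence converging strongly in $\h$ and we are finished, or, up to a subsequence, $u_n=u_0+\sum_{i=1}^k(R^i_n)^{(N-2)/2}v_i(R^i_n(\cdot-x^i_n))+o_n(1)$ in $\h$ with some $k\geq1$, where $u_0$ and the bubbles $v_i>0$ solve the Kirchhoff-perturbed equations on $\Omega$, respectively on $\bbrn$, with the common coefficient $a+b\mathcal{A}$ and $\mathcal{A}=\|\nabla u_0\|_{L^2(\Omega)}^2+\sum_{i=1}^k\|\nabla v_i\|_{L^2(\bbrn)}^2$, together with the norm splitting and the energy splitting $\mathcal{E}(u_n)=\widetilde{\mathcal{E}}(u_0)+\sum_{i=1}^k\widetilde{\mathcal{E}}_\infty(v_i)+o_n(1)$. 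Invoking the classification of the extremals of the critical Kirchhoff problem on $\bbrn$ (\cite[Theorem~1.1]{WHL15}, see also \cite[Theorem~1.1]{LLT15}) I would express $\|\nabla v_i\|_{L^2(\bbrn)}^2=[(a+b\mathcal{A})\mu^{-1}]^{(N-2)/2}\mathcal{S}^{N/2}$ and $\|v_i\|_{L^{2^*}(\bbrn)}^{2^*}=[(a+b\mathcal{A})\mu^{-1}]^{N/2}\mathcal{S}^{N/2}$, so that each bubble carries energy $g(\tilde{t}_2)$ with $g$ and $\tilde{t}_2$ from Lemma~\ref{lem6001}; condition $(D1)$ is exactly what forces $g''(\tilde{t}_2)<0$, hence the existence of $\tilde{t}_2<\tilde{t}_3<\tilde{t}_4$.

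The bubbling alternative is then excluded as in Proposition~\ref{prop0003}. By $(D1)$ and Lemma~\ref{lem6001} there is $\hat{t}\leq1$ with $[(a+b\mathcal{A})\mu^{-1}]^{(N-2)/4}\hat{t}=\tilde{t}_2$; since $\hat{t}\leq1$ and $1<\tilde{t}_0(u_n)$ (as $u_n\in\mathcal{N}^-$), Lemma~\ref{lem0005}$(2)$ gives $\mathcal{E}(u_n)\geq\mathcal{E}(\hat{t}u_n)$, and expanding $\|\nabla(\hat{t}u_n)\|^4$ and using the splitting yields $\mathcal{E}(\hat{t}u_n)\geq\mathcal{E}(\hat{t}u_0)+kg(\tilde{t}_2)+o_n(1)$. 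Since $m^-<g(\tilde{t}_2)$ by Lemma~\ref{lem0007}, we must have $k=0$ or $\mathcal{E}(\hat{t}u_0)<0$. In the latter case $(D2)$ (i.e.\ $\mathcal{F}(u_0)<0$) and Lemma~\ref{lem0005}$(2)$ provide $\tilde{t}_*(u_0)\leq\hat{t}$ with $\tilde{t}_*(u_0)u_0\in\mathcal{N}^-$, and projecting $u_n$ onto $\tilde{t}_*(u_0)u_n$ gives
\begin{eqnarray*}
m^-+o_n(1)=\mathcal{E}(u_n)\geq\mathcal{E}(\tilde{t}_*(u_0)u_n)\geq\mathcal{E}(\tilde{t}_*(u_0)u_0)+kg\big([(a+b\mathcal{A})\mu^{-1}]^{(N-2)/4}\tilde{t}_*(u_0)\big)+o_n(1)>m^-,
\end{eqnarray*}
since $g>0$ on $(0,\tilde{t}_2]$ and $k\geq1$ --- a contradiction. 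Hence $k=0$, $\{u_n\}$ converges strongly, $u_0$ minimizes $\mathcal{E}$ on $\mathcal{N}^-$, and the strong maximum principle (with \cite[Remark~4.2]{N14} to arrange $u_0\geq0$) shows $u_0>0$ solves $(\mathcal{P}_{a,b,\lambda,\mu})$. I expect the exclusion of bubbles to be the main obstacle: because $g(\tilde{t}_2)$ need not be positive one cannot argue by a naive energy comparison, and the correct rescalings $\hat{t},\tilde{t}_*(u_0)$ have to be read off from the fibering maps of Lemma~\ref{lem0005}; the extra term $\lambda\|u\|_{L^q(\Omega)}^q$ only makes the bookkeeping heavier, which is already absorbed into Lemmas~\ref{lem0005}, \ref{lem6001} and \ref{lem0007}.

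For the nonexistence part, suppose $u$ solves $(\mathcal{P}_{a,b,\lambda,\mu})$. Testing the equation with $u$ gives $u\in\mathcal{N}$, that is,
\begin{eqnarray*}
a\|\nabla u\|_{L^2(\Omega)}^2+b\|\nabla u\|_{L^2(\Omega)}^4=\lambda\|u\|_{L^q(\Omega)}^q+\mu\|u\|_{L^{2^*}(\Omega)}^{2^*},
\end{eqnarray*}
while the definitions of $\mathcal{S}_q$ and $\mathcal{S}$ yield $\lambda\|u\|_{L^q(\Omega)}^q\leq\lambda\mathcal{S}_q^{-q/2}\|\nabla u\|_{L^2(\Omega)}^q$ and $\mu\|u\|_{L^{2^*}(\Omega)}^{2^*}\leq\mu\mathcal{S}^{-2^*/2}\|\nabla u\|_{L^2(\Omega)}^{2^*}$. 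Writing $s=\|\nabla u\|_{L^2(\Omega)}^2>0$ and combining these,
\begin{eqnarray*}
as+bs^2\leq\lambda\mathcal{S}_q^{-q/2}s^{q/2}+\mu\mathcal{S}^{-2^*/2}s^{2^*/2}.
\end{eqnarray*}
As $q<2^*<4$ for $N\geq5$, both exponents on the right lie strictly between $1$ and $2$; splitting $as=\frac{a}{2}s+\frac{a}{2}s$ and applying Young's inequality (in the spirit of the nonexistence argument of Proposition~\ref{prop0003}, together with the a priori lower bound $s\geq d_0>0$ valid for every solution, cf.\ the proof of Lemma~\ref{lem0008}) to absorb $\lambda\mathcal{S}_q^{-q/2}s^{q/2}$ and $\mu\mathcal{S}^{-2^*/2}s^{2^*/2}$ into $\frac{a}{2}s$ plus suitable multiples of $s^2$, a direct computation then leads to
\begin{eqnarray*}
b<\bigg(\lambda\mathcal{S}_q^{-\frac{q}{2}}\Big(\frac{2}{a}\Big)^{4-q}\bigg)^{\frac{2}{q-2}}+\bigg(\mu\mathcal{S}^{-\frac{2^*}{2}}\Big(\frac{2}{a}\Big)^{4-2^*}\bigg)^{\frac{2}{2^*-2}},
\end{eqnarray*}
which contradicts the assumed lower bound on $b$. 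Hence $(\mathcal{P}_{a,b,\lambda,\mu})$ has no solution under that condition.
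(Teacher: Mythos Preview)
Your outline is correct and matches the paper's own proof, which for the existence part literally says it follows ``by modifying the proof of Proposition~\ref{prop0003} trivially'' via Lemma~\ref{lem0008}, and for the nonexistence part carries out exactly the Sobolev/Young computation you sketch.

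Two small corrections are worth making. First, the parenthetical ``$(D2)$ (i.e.\ $\mathcal{F}(u_0)<0$)'' is a misattribution: $(D2)$ is a condition on the parameters $a,b,\mu$ and says nothing about $\mathcal{F}(u_0)$ for the particular weak limit $u_0$. The correct reason there exists $\tilde{t}_*(u_0)\leq\hat{t}$ with $\tilde{t}_*(u_0)u_0\in\mathcal{N}^-$ is simply that $T_{u_0}(t)>0$ for small $t>0$ (since $T_{u_0}''(0)=a\|\nabla u_0\|_{L^2(\Omega)}^2>0$) while $T_{u_0}(\hat{t})=\mathcal{E}(\hat{t}u_0)<0$, so $T_{u_0}$ has a local maximum in $(0,\hat{t})$; this is the argument the paper uses in Proposition~\ref{prop0003}. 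Second, the a~priori lower bound $s\geq d_0$ you invoke in the nonexistence step is unnecessary: the Young inequality applied to $\lambda\mathcal{S}_q^{-q/2}s^{q/2}$ and $\mu\mathcal{S}^{-2^*/2}s^{2^*/2}$ (each absorbed against $\tfrac{a}{2}s$) gives
\[
0\;\geq\;\Big(b-\big(\lambda\mathcal{S}_q^{-q/2}(2/a)^{4-q}\big)^{2/(q-2)}-\big(\mu\mathcal{S}^{-2^*/2}(2/a)^{4-2^*}\big)^{2/(2^*-2)}\Big)\|\nabla u\|_{L^2(\Omega)}^4
\]
for \emph{every} $s>0$, which forces $u=0$ once $b$ meets the stated bound.
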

\begin{proof}
Since Lemma~\ref{lem0008} holds under the conditions $(D1)$--$(D3)$, by modifying the proof of Proposition~\ref{prop0003} trivially, we can show that $(\mathcal{P}_{a,b,\lambda,\mu})$ has a solution minimizing $\mathcal{E}(u)$ on $\mathcal{N}^-$ under the conditions $(D1)$--$(D3)$.  We finish the proof by obtaining a nonexistence result of $(\mathcal{P}_{a,b,\lambda,\mu})$ in the cases $N\geq5$ for $q>2$.  Indeed, let $u$ be a solution of $(\mathcal{P}_{a,b,\lambda,\mu})$, then by the definitions of $\mathcal{S}_q$ and $\mathcal{S}$ and the Young inequality, we can see that
\begin{eqnarray*}
0&=&a\|\nabla u\|^2_{L^2(\Omega)}+b\|\nabla u\|^4_{L^2(\Omega)}-\lambda\|u\|^q_{L^q(\Omega)}-\mu\|u\|^{2^*}_{L^{2^*}(\Omega)}\\
&\geq&a\|\nabla u\|^2_{L^2(\Omega)}+b\|\nabla u\|^4_{L^2(\Omega)}-\lambda\mathcal{S}_q^{-\frac{q}{2}}\|\nabla u\|^q_{L^q(\Omega)}-\mu\mathcal{S}^{-\frac{2^*}{2}}\|u\|^{2^*}_{L^{2^*}(\Omega)}\\
&\geq&\bigg(b-\bigg(\lambda \mathcal{S}_q^{-\frac{q}{2}}(\frac{2}{a})^{4-q}\bigg)^{\frac{2}{q-2}}-\bigg(\mu \mathcal{S}^{-\frac{2^*}{2}}(\frac{2}{a})^{4-2^*}\bigg)^{\frac{2}{2^*-2}}\bigg)\|\nabla u\|^4_{L^2(\Omega)}.
\end{eqnarray*}
Therefore, if $b\geq\bigg(\lambda \mathcal{S}_q^{-\frac{q}{2}}(\frac{2}{a})^{4-q}\bigg)^{\frac{2}{q-2}}+\bigg(\mu \mathcal{S}^{-\frac{2^*}{2}}(\frac{2}{a})^{4-2^*}\bigg)^{\frac{2}{2^*-2}}$, then we must have $u=0$, which implies $(\mathcal{P}_{a,b,\lambda,\mu})$ has no solutions.
\end{proof}

We close this section by

\noindent\textbf{Proof of Theorem~\ref{thm0005}:}\quad It follows immediately from Propositions~\ref{prop0003} and \ref{prop0004}.  \qquad\raisebox{-0.5mm}{%
\rule{1.5mm}{4mm}}\vspace{6pt}


\begin{thebibliography}{999}
\bibitem{A12}
A. Azzollini, The Kirchhoff equation in $\bbr^3$ perturbed by a local
nonlinearity,  {\it Differential Integral Equations,} {\bf 25} (2012), 543--554.

\bibitem{AF12}
C. Alves, G. Figueiredo, Nonlinear perturbations of a periodic Kirchhoff equation in $\bbr^3$, {\it Nonlinear Anal. TMA,} {\bf 75} (2012), 2750--2759.


\bibitem{CKW11}
C. Chen, Y. Kuo, T. Wu, The Nehari manifold for a Kirchhoff type problem involving
sign-changing weight functions, {\it J. Differential Equations,} {\bf 250} (2011), 1876--1908.

\bibitem{CWL12}
B. Cheng, X. Wu, J. Liu, Multiple solutions for a class of Kirchhoff type problems with concave nonlinearity, {\it NoDEA Nonlinear Differ. Equ. Appl.,} {\bf 19} (2012), 521--537.

\bibitem{DPS15}
Y. Deng, S. Peng, W. Shuai, Existence and asymptotic behavior of nodal solutions for the Kirchhoff-type problems in $\bbr^3$, {\it J. Funct. Anal.,} {\bf269} (2015), 3500--3527.

\bibitem{F13}
G. Figueiredo, Existence of a positive solution for a Kirchhoff problem type with critical growth via truncation
argument, {\it J. Math. Anal. Appl.,} {\bf401} (2013), 706--713.

\bibitem{G15}
Z. Guo, Ground  states  for Kirchhoff  equations without compact condition, {\it J. Differential Equations,} {\bf259} (2015), 2884--2902.

\bibitem{HLP14}
Y. He, G. Li, S. Peng, Concentrating bound states for Kirchhoff type problems in $\bbr^3$ involving critical Sobolev exponents, {\it Adv. Nonlinear Stud.,} {\bf14} (2014), 441--468.

\bibitem{HZ12}
X. He, W. Zou, Existence and concentration behavior of positive solutions
for a Kirchhoff equation in $\bbr^3$, {\it J. Differential Equations,} {\bf 252} (2012), 1813--1834.

\bibitem{HL15}
Y. He, G. Li, Standing waves for a class of Kirchhoff type problems in $\bbr^3$ involving critical Sobolev exponents, {\it Calc. Var. Partial Differential
Equations}, {\bf54} (2015), 3067--3106.


\bibitem{HLW15}
Y. Huang, Z. Liu, Y. Wu, On Kirchhoff type equations with critical Sobolev exponent and Naimen's open problems, arXiv:1507.05308v1  [math.AP].

\bibitem{HLW151}
Y. Huang, Z. Liu, Y. Wu, On finding solutions of a Kirchhoff type equation, {\it Proc. Amer. Math. Soc.,} {\bf 144} (2016), 3019--3033.

\bibitem{HLW16}
Y. Huang, Z. Liu, Y. Wu, Positive solutions to an elliptic equation in $\bbr^N$ of the Kirchhoff type, arXiv:1603.07428v1  [math.AP].


\bibitem{K83}
G. Kirchhoff, Mechanik. Teubner, Leipzig (1883).

\bibitem{LLS12}
Y. Li, F. Li, J. Shi, Existence of a positive solution to Kirchhoff type problems
without compactness conditions, {\it J. Differential Equations,} {\bf 253} (2012), 2285--2294.

\bibitem{LLS14}
Z. Liang, F. Li, J. Shi, Positive solutions to Kirchhoff type equations with nonlinearity
having prescribed asymptotic behavior, {\it Ann. Inst. H. Poincar\'e Anal. Non Lin\'eaire,} {\bf31}(2014), 155--167.

\bibitem{LY131}
G. Li, H. Ye, Existence of positive ground state solutions for the nonlinear Kirchhoff type equations in $\bbr^3$, {\it J. Differential Equations,} {\bf 257} (2014), 566--600.

\bibitem{LLT15}
J. Liu, J. Liao, C. Tang, Positive solutions for Kirchhoff-type equations with critical exponent in $\bbr^N$, {\it J. Math. Anal. Appl.,} {\bf429} (2015), 1153--1172.

\bibitem{N141}
D. Naimen, Positive solutions of Kirchhoff type elliptic equations involving a critical Sobolev exponent, {\it NoDEA Nonlinear Differ. Equ. Appl.,} {\bf 21} (2014), 885--914.

\bibitem{N14}
D. Naimen, The critical problem of Kirchhoff type elliptic equations
in dimension four, {\it  J. Differential Equations, } {\bf257} (2014), 1168--1193.

\bibitem{PZ06}
K. Perera, Z. Zhang, Nontrivial solutions of Kirchhoff-type problems via the Yang index, {\it J. Differential Equations,} {\bf221} (2006) 246--255.


\bibitem{SW14}
J. Sun, T.-F. Wu, Ground state solutions for an indefinite Kirchhoff type problem with steep potential well,   {\it  J. Differential Equations, }  {\bf 256}(2014),  1771--1792.

\bibitem{WTXZ12}
J. Wang, L. Tian, J. Xu, F. Zhang, Multiplicity and concentration of positive solutions
for a Kirchhoff type problem with critical growth, {\it J. Differential Equations,} {\bf 253} (2012), 2314--2351.

\bibitem{WHL15}
Y. Wu, Y. Huang, Z. Liu, On a Kirchhoff type problem in $\bbr^N$, {\it J. Math. Anal. Appl.,} {\bf425} (2015), 548--564.

\bibitem{W15}
W. Shuai, Sign-changing solutions for a class of Kirchhoff-type
problem in bounded domains, {\it J. Differential Equations,} {\bf259} (2015), 1256--1274.

\bibitem{ZP06}
Z. Zhang, K. Perera, Sign changing solutions of Kirchhoff type problems via invariant sets of descent flow, {\it J. Math. Anal. Appl.,} {\bf 317} (2006), 456--463.
\end{thebibliography}
\end{document}